
\documentclass[11pt,a4paper]{article}

\usepackage[T1]{fontenc}
\usepackage[english]{babel}
\usepackage[utf8]{inputenc}
\usepackage[pdftex]{graphicx}           
\usepackage{setspace}                   
\usepackage{indentfirst}                
\usepackage{makeidx}                    
\usepackage{courier}                    
\usepackage{type1cm}                    
\usepackage{titletoc}
\usepackage[font=small,format=plain,labelfont=bf,up,textfont=it,up]{caption}
\usepackage[usenames,svgnames,dvipsnames]{xcolor}
\usepackage[a4paper,top=2.54cm,bottom=2.0cm,left=2.0cm,right=2.54cm]{geometry} 
\usepackage[pdftex,plainpages=false,pdfpagelabels,pagebackref,colorlinks=true,citecolor=DarkGreen,linkcolor=NavyBlue,urlcolor=DarkRed,bookmarksopen=true]{hyperref}
\usepackage[all]{hypcap} 
\fontsize{60}{62}\usefont{OT1}{cmr}{m}{n}{\selectfont}
\usepackage{dsfont}
\usepackage{amsmath}
\usepackage{amsthm,amssymb}
\usepackage{mathabx}

\usepackage{amsfonts}
\usepackage{relsize}
\usepackage{mathrsfs} 
\usepackage{pgf,tikz}
\usepackage{mathabx}
\usetikzlibrary{arrows}
\usepackage{enumitem}
\usepackage{genealogytree}

\usepackage{bbold}

\DeclareRobustCommand*{\cev}[1]{\reflectbox{\ensuremath{\vec{\reflectbox{\ensuremath{#1}}}}}}

\setlist[description]{font=\normalfont\itshape\textbullet\space}
\setlength{\headheight}{14pt}

\usepackage{fancyhdr}
\fancyhf{}


\graphicspath{{./figuras/}}             
\frenchspacing                          
\urlstyle{same}                         
\makeindex                              
\raggedbottom                           
\fontsize{60}{62}\usefont{OT1}{cmr}{m}{n}{\selectfont}
\cleardoublepage
\normalsize

\def\R{{\mathbb R}}

\def\Z{{\mathbb Z}}
\def\Zd{\mathbb{Z}^d}
\def\N{{\mathbb N}}

\newcommand*\dif{\mathop{}\!\mathrm{d}}
\def\ds{\mathrm d s}

\def\fC{{\mathfrak C}}
\def\fQ{{\mathfrak Q}}
\def\fR{{\mathfrak R}}

\def\mt{{\mathfrak T}}
\def\mi{{\mathfrak I}}

\def\L{{\mathcal L}}
\def\cR{{\mathcal R}}
\def\cI{{\mathcal I}}
\def\cJ{{\mathcal J}}

\def\B{{\mathcal B}}
\def\cH{{\mathcal H}}
\def\cQ{{\mathcal Q}}

\def\cE{{\mathcal E}}
\def\G{{\mathcal G}}
\def\T{{\mathcal T}}
\def\W{{\mathcal W}}
\def\ss{{\mathcal S}}

\def\cD{{\cal D}}

\def\M{{\mathcal M}}

\def\Nx{{\mathcal N}_{\x}}
\def\cN{{\mathcal N}}
\def\F{{\cal F}}

\def\sC{\mathsf C}
\def\sd{\mathsf d}
\def\sfD{\mathsf D}

\def\H{\mathsf H}
\def\h{\mathsf h}

\def\sK{\mathsf K}
\def\sL{\mathsf L}

\def\sN{\mathsf N}
\def\sQ{\mathsf Q}
\def\sT{\mathsf T}

\def\sV{\mathsf V}
\def\w{\mathsf w}
\def\bw{\mathbf w}
\def\bt{\mathbf T}

\def\Y{\mathsf Y}

\def\sx{\mathsf x}
\def\cx{\sy}
\def\sy{\mathsf y}

\def\sz{\mathsf z}
\def\brex{\breve X}
\def\bro{\breve\o}
\def\oq{\o^{\text{eq}}}
\def\op{\o^+}

\def\vpq{\vp^{\text{eq}}}

\def\ozq{\o^\sz_{\text{eq}}}
\def\chozq{{\check\o}^\sz_{\text{eq}}}
\def\hozq{{\hat\o}^\sz_{\text{eq}}}

\def\choz{{\check\o}^\sz}

\def\brel{\breve\sL}
\def\bret{\breve\tau}
\def\circo{\mathring\o}

\def\tio{\tilde\o}

\def\cirx{\mathring X}
\def\cirxi{\mathring \xi}
\def\brexi{\breve \xi}
\def\cirt{\mathring\tau}
\def\circa{\mathring\mA}
\def\mA{\mathsf A}
\def\circh{\mathring\cH}
\def\cirm{\mathring\M}
\def\cirn{\mathring\cN}
\def\bh{\bar\cH}

\def\mvs{\mathring \varsigma}

\def\ve{\varepsilon}

\def\alt{A_{\ell,t}}
\def\blt{B_{\ell,t,\eps}}

\def\Pm{{\mathrm P}}
\def\P{{\mathbf P}}
\def\Pz{{\mathbf P}_{\zero}}
\def\Pr{{\mathbb P}}
\def\Em{{\mathrm E}}
\def\E{{\mathbf E}}
\def\Er{{\mathbb E}}

\def\ms{\mathsmaller}

\def\nn{\nonumber}

\def\x{\mathbf{x}}

\def\xn{\mathsf{x}_{\ms{n}}}

\def\xz{\mathsf{x}_{\ms{0}}}
\def\xu{\mathsf{x}_{\ms{1}}}
\def\y{\mathbf{y}}

\def\one{\mathbf 1}
\def\oone{\mathbb 1}
\def\zero{\mathbf 0}
\def\p{\mathbf p}
\def\pp{p^\psi}
\def\qp{q^\psi}

\def\q{\mathbf q}
\def\mzero{\ms{\ms{\zero}}}

\def\k{\mathbf{k}}

\def\o{\omega}
\def\fvt{\vec{\vartheta}}
\def\bvt{\cev{\vartheta}}

\def\cho{\check\omega}

\def\d{\delta}

\def\D{\Delta}

\def\Ups{\Upsilon}
\def\t{\tau}

\def\tn{\tau_{\ms{n}}}

\def\tu{\t_{\ms{1}}}

\def\s{\sigma}

\def\vf{\varphi}
\def\vs{\varsigma}
\def\vs{\varsigma}
\def\vr{\varrho}

\def\ga{\gamma}
\def\eps{\epsilon}
\def\tsi{\tilde\s}

\def\vp{\varpi}

\definecolor{title}{RGB}{255,0,90}
\definecolor{def}{rgb}{0.0, 0.28, 0.67}
\definecolor{bege}{rgb}{0.66, 0.637, 0.492}
\definecolor{bpink}{rgb}{0.95,0.82,0.69}
\definecolor{xdxdff}{rgb}{0.49,0.49,1}
\definecolor{qqtttt}{rgb}{0,0.2,0.2}
\definecolor{qqwwcc}{rgb}{0,0.4,0.8}
\definecolor{ffttww}{rgb}{1,0.2,0.4}
\definecolor{qqqqff}{rgb}{0,0,1}
\definecolor{zzttqq}{rgb}{0.6,0.2,0}
\definecolor{ttfftt}{rgb}{0.2,1,0.2}
\definecolor{ttffqq}{rgb}{0.2,1,0}
\definecolor{qqqqff}{rgb}{0,0,1}
\definecolor{qqwwff}{rgb}{0,0.4,1}
\definecolor{ttzzff}{rgb}{0.2,0.6,1}
\definecolor{qqffqq}{rgb}{0,1,0}
\definecolor{fftttt}{rgb}{1,0.2,0.2}
\definecolor{ffqqww}{rgb}{1,0,0.4}
\definecolor{xdxdff}{rgb}{0.49,0.49,1}
\definecolor{cqcqcq}{rgb}{0.75,0.75,0.75}


\newtheorem{teorema}{Theorem}[section]
\newtheorem{lema}[teorema]{Lemma}
\newtheorem{corolario}[teorema]{Corollary}
\newtheorem{observacao}[teorema]{Remark}
\newtheorem{proposicao}[teorema]{Proposition}















\author{Luiz Renato Fontes \footnote{IME-USP, Rua do Mat\~ao 1010, 05508-090
		S\~ao Paulo SP,  Brazil, lrfontes@usp.br} \thanks{Partially
		supported by CNPq grant 307884/2019-8, and FAPESP grants 2017/10555-0 and 2015/00053-2}
	\and Pablo A.~Gomes\footnote{IME-USP, Rua do Mat\~ao 1010, 05508-090
		S\~ao Paulo SP,  Brazil, pabloag7@yahoo.com.br} \thanks{Supported by	FAPESP grant 2020/02636-3}
	\and Maicon A.~Pinheiro \footnote{IME-USP, Rua do Mat\~ao 1010, 05508-090
		S\~ao Paulo SP,  Brazil, avertere@zoho.com} \thanks{Supported by CNPq and CAPES institutional fellowships}}

\title{Random walk in a birth-and-death dynamical environment} 

\date{}

\begin{document}
	
	\maketitle

	
	\begin{abstract}
		
		We consider a particle moving in continuous time as a Markov jump process; its discrete chain is given by an ordinary random walk on $\Zd$ , and its jump rate at $(\x,t)$ is given by a fixed function $\vf$ of the state of a birth-and-death (BD) process at $\x$ on time $t$; BD processes at different sites are independent and identically distributed, and $\vf$ is assumed non increasing and vanishing at infinity. We derive a LLN and a CLT for the particle position when the environment is 'strongly ergodic'. In the absence of a viable uniform lower bound for the jump rate, we resort instead to stochastic domination, as well as to a subadditive argument to control the time spent by the particle to give $n$ jumps; and we also impose conditions on the initial (product) environmental initial distribution. We also present results on the asymptotics of the environment seen by the particle (under different conditions on $\vf$).

	\end{abstract}

	\noindent AMS 2010 Subject Classifications: {60K37, 60F05}

	\smallskip

	\noindent Keywords and Phrases: {random walk in random environment, space-time random environment,
		birth-and-death environment, central limit theorem, law of large numbers, environment seen from the particle}




\section{Introduction}
\label{intro}

In this paper, we analyse the long time behavior of  random walks taking place in {\em an evolving field of traps}. A starting motivation is 
to consider a {\em dynamical environment} version of Bouchaud's trap model on $\Zd$. In the (simplest version of the) latter model, we
have a continuous time random walk (whose embedded chain is an ordinary random walk) on $\Zd$ with spatially inhomogeneous jump rates,
given by a field of iid random variables, representing traps. The greater interest is for the case where the inverses of the rates
are heavy tailed, leading to subdiffusivity of the particle (performing the random walk), and to the appearance of the phenomenon of aging.
See~\cite{FIN02} and~\cite{BAC}.

In the present paper, we have again a continuous time random walk whose embedded chain is an ordinary random walk 
(with various hypotheses on its jump distribution, depending on the result), but now the rates are
spatially {\em as well as temporally} inhomogeneous, the rate at a given site and time is given by a (fixed) function, which we denote by $\vf$,
of the state of a birth-and-death chain (in continuous time; with time homogeneous jump rates) at that site and time;  birth-and-death chains for different sites are iid and ergodic. 

We should  not expect subdiffusivity if $\vf$ is bounded away from $0$, so we make the opposite assumption for our first main result, which is 
nevertheless a Central Limit Theorem for the position of the particle (so, no subdiffusivity there, either), as well as a corresponding 
Law of Large Numbers.

CLT's for random walks in dynamical random environments have been, from a  more general point of view, or under different motivations, 
previously established in a variety of situations; we mention~\cite{BMP97},
\cite{BZ06},~\cite{DKL08},~\cite{RV13} for a few cases with fairly general environments, and ~\cite{dHdS},~\cite{MV},~\cite{HKT} 
in the case of environments given by specific  interacting particle systems;~\cite{BMPZ} and~\cite{BPZ} deal with a case where the jump times of the particle are iid.
There is a relatively large literature establishing strong LLN's for the position of the particle in random walks in space-time random environments; 
besides most of the references given above, which also establish it, we mention~\cite{AdHR} and~\cite{BHT}. \cite{Y09} derives large deviations for 
the particle in the case of an iid space-time environment.

These papers assume (or have it naturally) in their environments an ellipticity condition, from which our environment crucially departs, 
in the sense of our jump rates not being bounded away from $0$. 
Jumps are generally also taken to be bounded, a possibly merely technical assumption in many respects,
which we in any case forgo. It should also be said that in many other respects, these models are quite more general, or more correlated
than ours\footnote{This is perhaps a good point to remark that even though our environment is constituted by iid Birth-and-Death processes, 
	and the embedded chain of the particle is independent of them, the continuous time motion of the particle brings about a correlation between
	the particle and the environment.}.

So, we seem to need a different approach, and that is what we develop here. Our argument requires monotonocity of $\vf$, 
and "strong enough" ergodicity of the environmental chains (translating into something like a second moment condition on its equilibrium 
distribution). 

The main building block for arguing our CLT, in the case where the initial environment is identically 0, is a Law of Large Numbers for the time
that the particle takes to make $n$ jumps;  this in turn relies on a subadditivity argument, resorting to the Subadditive Ergodic Theorem; 
in order to obtain the control the latter theorem requires on expected values, we rely on a domination of the environment left by the particle
at jump times (when starting from equilibrium); this is a {\em stochastic} domination, rather than a strong domination, which would be provided 
by the infimum of $\vf$, were it positive. We extend to more general, product initial environments, with a unifom exponentially decaying
tail (and also restricting in this case to spatially homogeneous environments), by means of coupling arguments.

We expect to be able to establish various forms of subdiffusivity in this model when the environment either is not ergodic or not "strongly ergodic"
(with, say, heavy tailed equilibrium measures). This is under current investigation. \cite{BPZ} has results in this direction in the 
case where the jump times of the particle are iid.

Another object of analysis in this paper is the long time behavior of the environment seen by the particle at jump times. We show convergence in distribution under different hypotheses (but always with spatially homogeneous environments, again in this case), and also that the limiting distribution is absolutely continuous with respect to the product of environmental equilibria. We could not bring the domination property mentioned above to bear for this result in the most involved instances
of a recurrent embedded chain, so we could not avoid the assumption of a bounded away from 0 $\vf$ (in which case, monotonicity can be dropped),
and a "brute force", strong tightness control this allows. 
This puts us back under the ellipticiy restriction on the rates\footnote{But the state space of the environment remains non compact.}, 
adopted in many results of the same nature that have been previously obtained, 
as in many of the above mentioned references, to which we add~\cite{BMP94}.

\begin{center}
	------------------------------------------
\end{center}

The remainder of this paper is organized as follows. In Section~\ref{mod} we define our model in detail, and discuss some of its properties.
Section~\ref{conv} is devoted to the formulations and proofs of the LLN and CLT under an environment started from the identically 0 configuration.
The main ingredient, as mentioned above, a LLN for the time that the particle takes to give $n$ jumps, is developed in Subsection~\ref{sec:3.1}, and the remaining subsections are devoted for the conclusion. In Section~\ref{ext} we extend the CLT for more general (product) initial configurations of the 
environment (with a uniform exponential moment). In Section~\ref{env} we formulate and prove our result concerning the environment seen from the
particle (at jump times). Three appendices are devoted for auxiliary results concerning birth-and-death processes and ordinary (discrete time) random walks.



\section{The model}
\label{mod}

\setcounter{equation}{0}

\noindent 
For $d\in \N_*:=\N\setminus \{0\}$ and $S\subset \R^d$, let $\cD \left(\R_+,S\right)$ denote the set of  
c\`adl\`ag trajectories from $\R_+$ to $S$. 
We represent by $\zero\in E$ and $\mathbf{1}\in E$, $E=\N^d, \Zd,
\N^{\Zd}$,  respectively, the null element, and the element with all coordinates identically equal to 1. 

We will use the notation $M\sim BDP(\p,\q)$ to to indicate that $M$ is a birth-and-death process on $\N$ 
with birth rates $\p=(p_n)_{n\in\N}$ and death rates $\q=(q_n)_{n\in\N_*}$. 
We will below consider indepent copies of such a process, and we will assume that
$p_n,q_n\in(0,1)$ for all $n$, $p_n+q_n\equiv1$ and
\begin{equation}\label{erg}
	\sum_{n\geq1}\prod_{i=1}^n\frac{p_{i-1}}{q_i}<\infty.
\end{equation}
This condition is well known to be equivalent to ergodicity of such a process.
We will also assume that $p_n\leq q_n$ for all $n$ and $\inf_np_n>0$. See Remark~\ref{relax} at the end of Section~\ref{conv}.

We now make an explicit construction of our process, namely, the random walk in a birth-and-death (BD) environment.
Let $\o=\left(\o_{\x} \right)_{\x\in \Zd}$ be an independent family of BDP's as prescribed in the 
paragraph of~\eqref{erg} above, each started from its respective initial distribution $\mu_{\x,0}$, independently of each other; 
we will denote by $\mu_{\x,t}$ the distribution of $\o_{\x}(t)$, 
$t\in\R_+$, $\x\in\Zd$; 
$\o$ plays the role of random dynamical environment of our random walk, which we may view as a 
stochastic process $\left(\o(t)\right)_{t\in\R_+}$ on $\Lambda:=\N^{\Zd}$ with initial distribution
$\hat{\mu}_0:=\bigotimes\limits_{\x\in\Zd}\mu_{\x,0}$  and trajectories living on $A:=\cD\left(\R_+,\N\right)^{\Zd}$. 
Let  $\Pm_{\hat{\mu}_{{0}}}$ denote the law of $\o$.

Let now $\pi$ be a probability on $\Zd\setminus\left\lbrace \zero\right\rbrace$, 
and let $\xi:=\left\lbrace \xi_n\right\rbrace_{n\in\N_*}$ be an iid sequence of random vectors taking values in $\Zd\setminus\left\lbrace \zero\right\rbrace$, each distributed as $\pi$; $\xi$ is asumed independent of $\o$.

Next, let $\M$ be a  Poisson point process of rate $1$ in $\R^d\times\R_+$,
independent of $\o$ and $\xi$. For each $\x=(x_1,\ldots,x_d)\in\Zd$, let
\begin{equation}
	\M_{\x} = \M\cap \left(C_{\x}\times \R_+\right), 
	\label{2.2}
\end{equation}
where $C_{\x}=\bigtimes\limits_{{i=1}}^{{d}}\left[c_{x_i},%
c_{x_i}+1\right)$, with $c_{x_i}:=x_i-1/2$, $1\leq i\leq d$. It is quite clear that
\begin{equation}
	\M=\bigcup\limits_{\x\in\Zd}\M_{\x}
	\label{2.3}
\end{equation}
and that by well known  properties of Poisson point processes, $\left\{\M_{\x}:\x\in\Zd\right\}$ is an independent collection such with $\M_{\x}$ 
a  Poisson point process of rate $1$ in $C_{\x}\times [0,+\infty)$. 

Given $\o\in A$ and 
$\vf:\N\to(0,1]$, 
set
\begin{equation}
	\Nx=\left\{(y_1,\ldots,y_d,r)\in\M_{\x}:y_d\in\left[\,c_{x_d},c_{x_d}+\vf(\o_{\x}(r))\,\right)\right\}, \quad \x\in\Zd.
	\label{2.4}
\end{equation}
\noindent Note that the projection of $\Nx$ on $\{\x\}\times \R_+$ is a 
inhomogeneous Poisson point process on $\{\x\}\times \R_+$ with intensity function given by
\begin{equation}
	\lambda_{\x}(r)=\vf(\o_{\x}(r)),\quad \x\in\Zd,~r\geq 0.
	\label{2.5}
\end{equation}

Let us fix $X(0)=\x_{{0}}$, $\x_{{0}}\in \Zd$, and define
$X(t)$, $t\in \R_+$, as follows. 
Let $\tau_{{0}}=0$, and set
\begin{equation}
	\tau_{1}=\inf\left\{r>0:\cN_{\x_0}\cap\left(C_{\x_0}\times \left(0,r\right]\right)\neq \emptyset\right\},
	\label{2.6}
\end{equation}
\noindent where by convention $\inf\emptyset=\infty$. For $t\in (0,\tau_{{1}})$, $X(t)=X(0)$, and, if $\tau_{1}<\infty$, then
\begin{equation}
	X\left(\tau_{{1}}\right)=X(0)+\xi_1.
	\label{2.7}
\end{equation}

\noindent For $n\geq 2$, we inductively define
\begin{equation}
	\tau_{n}=\inf\left\{r>\tau_{n\text{-}{1}}:\cN_{X_{\tau_{{n\text{-}1}}}}
	\cap\left(C_{X_{\tau_{n\text{-}{1}}}}\times \left(\tau_{n\text{-}{1}},
	r\right]\right)\neq \emptyset\right\}.
	\label{2.8}
\end{equation}
\noindent For $t\in\left(\tau_{n\text{-}{1}},\tau_{n}\right)$, we set $X(t)=
X\left(\tau_{n\text{-}{1}}\right)$, and, if $\tau_{n}<\infty$, then

\begin{equation}
	X\left(\tau_{{n}}\right)=X\left(\tau_{n-1}\right)+\xi_n.
	\label{2.9}
\end{equation}

In words, $\left(\tau_n\right)_{n\in \N}$ are the jump times of the process $X:=\left(X(t)\right)_{t\in\R_+}$, which in turn, given $\o\in A$, is a continuous time random walk on $\Zd$ starting from $\x_{{0}}$ with jump rate at $\x$ at time $t$ given by $\vf(\o_{\x}(t))$,
$\x\in\Zd$. Moreover, when at  $\x$, the next site to be visited is given by $\x+\y$, with $\y$ generated from $\pi$, 
$\x,\y\in\Zd$.
We adopt $\cD\left(\R_+,\Zd\right)$ as sample space for $X$. 

Let us denote by $P_{\x_0}^{^{\o}}$ the conditional law of $X$ given $\o\in A$. 
We remark that, since $\cN_{\x}\subset \M_{\x}$ for all $\x\in\Zd$, it follows from the lack of memory of Poisson processes that, 
for each $n\in \N_*$, given that $\tau_{n-1}<\infty$, $P_{\x_0}^{^{\o}}$-almost surely $(P_{\x_0}^{^{\o}}$-a.s.), $\tau_n-\tau_{n-1}\geq Z_n$, with
$Z_n$ a standard exponential random variable. Thus, $\tau_n\to\infty$ $P_{\x_0}^{^{\o}}$-a.s.~as $n\to\infty$, i.e., $X$ is non-explosive. Thus, given $\o\in A$, the inductive construction of $X$ proposed above  is well defined for all $t\in\R_+$.  We also notice that given the ergodicity assumption we made on $\o$, we also have that $X$ gives $P_{\x_0}^{^{\o}}$-a.s.~infinitely many jumps along all of its history for almost every realization of $\o$. 

Let us denote by $\sx=\left(\xn\right)_{n\in\N}$
the embedded (discrete time) chain of $X$. We will henceforth at times make reference to a {\em particle} which moves in continuous time on $\Zd$, 
starting from $\x_0$, and whose  trajectory is given by $X$; in this context, $X(t)$ is of course the position of the particle at time $t\geq0$.
For simplicity, we assume $\sx$ {\em irreducible}.

\begin{observacao}
	At this point it is worth pointing out that, given $\o$, $X$ is a time inhomogeneous Markov jump process; we also have that the joint process
	$\left(X(t),\o(t)\right)_{t\in\R_+}$ is Markovian.
\end{observacao}

We may then realize our joint process in the triple
$(\Omega,\F,\P_{\hat{\mu}_{{0}},\x_0})$, with $\hat{\mu}_{{0}},\x_0$ as above, where
$\Omega=\cD\left(\R_+,\N\right)^{\Zd}\times\cD\left(\R_+,\Zd\right)$,
$\F$ is the appropriate product $\sigma$-algebra on $\Omega$, and 

\begin{equation}
	\P_{\hat{\mu}_{{0}},\x_0}\left(M\times N\right)=\int_{M}
	\dif\Pm_{\hat{\mu}_{{0}}}(\omega)P_{\x_0}^{\omega}(N),
	\label{2.13}
\end{equation}
where $M$ an $N$ are measurable subsets from $A$ e $\cD\left(\R_+,\Zd
\right)$, respectively. We will call
$P_{\x_0}^{^{\o}}$ the \textit{quenched} law of $X$ (given $\o$), and  $\P_{\hat{\mu}_{{0}},\x_0}$ the \textit{annealed} law of $X$. 

We will say that a claim about $X$ holds $\P_{\x_0,\hat{\mu}_{{0}}}$-a.s.~if for $\Pm_{\hat{\mu}_{{0}}}$-almost every~$\o$
(for $\Pm_{\hat{\mu}_{{0}}}$-a.e. $\o$), the claim holds $P^{\o}_{\x_0}$-q.c. 

We will also denote by $\mathrm{E}_{\hat{\mu}_{{0}}}$, $E_{\x_0}^{^{\o}}$ and $\mathbf{E}_{\hat{\mu}_{{0}},\x_0}$ 
the expectations with respect to $\Pm_{\hat{\mu}_{{0}}}$, $P_{\x_0}^{^{\o}}$ and $\P_{\hat{\mu}_{{0}},\x_0}$, respectively. 
We reserve the notation $\Pr_\mu$ (resp., $\Pr_n$) and $\Er_\mu$  (resp., $\Er_n$) for the probability and its expectation underlying
a single birth-and-death process (as specified above) starting from a initial distribution $\mu$ on $\N$ (resp., starting from $n\in\N$).

Furthermore, in what follows, without loss, we will adopt $\x_0\equiv\zero$, and omit such a subscript, i.e.,
\begin{equation}
	P^{^{\o}}:=P_{\zero}^{^{\o}} \quad \text{and} \quad 
	\P_{\hat{\mu}_{{0}}}:=\P_{\hat{\mu}_{{0}},\zero}.
\end{equation}
\noindent We will also omit the subscript $\hat{\mu}_{{0}}$ when it is 
irrelevant. And from now on we will indicate
\begin{equation}
	\P_{\bw}, \quad \bw\in \Lambda,
	\label{eq:2.15}
\end{equation}
\noindent the law of the joint process starting from $\o(0)=\bw$ and $\x_0\equiv \zero$.

Let now $\D_n:=\tn-\t_{{n-1}}$, $n\in \N_*$. We observe that

\begin{equation}
	\P_{\hat{\mu}_{{0}}} \left(\tu>t\right)=
	\Em_{\hat{\mu}_{{0}}}\left[\exp{\left(-\int_0^t 
		\vf(\omega_{\zero}(s))\,ds\right)}\right], \quad t\in\R_+,
	\label{2.15}
\end{equation}

\noindent and, for $n\in\N$, 

\begin{equation}
	\P_{\hat{\mu}_{{0}}} \left(\D_{n+1}>t\right)=
	\Em_{\hat{\mu}_{{0}}}\left[\exp{\left(-\int_{\tn}^{\tn+t} 
		\vf(\omega_{\xn}(s))\,ds\right)}\right], \quad t\in\R_+,
	\label{2.16}
\end{equation}

\noindent recalling that $\left(\xn\right)_{n\in\N}$ denotes the jump chain of $\left(X(t)\right)_{t\in\R_+}$. For $n\in\N$, let us set

\begin{equation}
	I_n(t):=\int_{\tn}^{\tn+t} \vf(\omega_{\xn}(s))\,ds,\quad t\in \R_+,
	\label{2.18}
\end{equation}

\noindent
$I_n:\R_+\to \R_+$, $n\in\N$, is  well defined and is invertible $\P$-q.c.~under our conditions on the parameters of $\o$ (which ensure its recurrence).    
We may thus write
\begin{equation}                                                        
	\P_{\hat{\mu}_{{0}}} \left(\t_1>t\right)=            
	\Em_{\hat{\mu}_{{0}}}\left[e^{-I_{0}(t)}\right], \quad t\in\R_+,
	\label{2.19}
\end{equation}
\noindent and 
\begin{equation}
	\P_{\hat{\mu}_{{0}}} \left(\D_{n+1}>t\right)=
	\Em_{\hat{\mu}_{{0}}}\left[e^{-I_n\left(t\right)}\right], 
	\quad t\in\R_+.
	\label{2.20}
\end{equation}

\subsection{Alternative construction}
\label{sec:2.2}
\noindent We finish this section with an alternative construction of $X$, based in the following simple remark, which will be used further on.
\noindent Let $\o$ and $\xi$ as above be fixed, and set $\mathsf{T}_0=0$ and, for $n\in\N_*$, $\mathsf{T}_n=\sum\limits_{{k=0}}^{{n-1}} I_{k}\left(\D_{k+1}\right)$. 
\begin{lema}
	Under the conditions on the parameters of $\o$ assumed in the paragraph of~\eqref{erg}, we have that
	$\left\lbrace \mathsf{T}_n:n\in\N_*\right\rbrace$ is a rate 1 Poisson point process on $\R_+$, independent 
	of $\o$ and $\xi$. 
	\label{lema:2.2}
\end{lema}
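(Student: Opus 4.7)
The plan is to show, by induction on $n$, that conditional on the environment $\o$ and the jump-direction sequence $\xi$, the increments $\mathsf T_n - \mathsf T_{n-1} = I_{n-1}(\Delta_n)$ are i.i.d.\ standard exponentials; since this conditional law does not depend on $(\o,\xi)$, the unconditional law is the same and is independent of $(\o,\xi)$, which is exactly the assertion that $\{\mathsf T_n\}$ is a rate-$1$ PPP on $\R_+$ independent of $(\o,\xi)$.

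The base case is the content of \eqref{2.19}: quenched on $\o$, the first jump time $\Delta_1=\tau_1$ is, by construction, the first arrival of an inhomogeneous Poisson process on $\R_+$ with intensity $s\mapsto \vf(\o_{\zero}(s))$, and the classical time-change formula for inhomogeneous Poisson processes gives that $I_0(\Delta_1)=\int_0^{\Delta_1}\vf(\o_{\zero}(s))\,ds$ is a standard exponential random variable. Indeed, $P^{\o}(I_0(\Delta_1)>u)=P^{\o}(\Delta_1>I_0^{-1}(u))=e^{-I_0(I_0^{-1}(u))}=e^{-u}$, using the $P^{\o}$-a.s.\ invertibility of $I_0$ noted after \eqref{2.18} and the quenched formula \eqref{2.19}. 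In particular, the quenched distribution of $I_0(\Delta_1)$ does not depend on $\o$ (nor on $\xi$).

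For the inductive step, fix $n\geq 1$ and condition on $\mathcal G_n:=\sigma(\o,\xi,\tau_1,\dots,\tau_n)$. By the construction in \eqref{2.8}, given $\mathcal G_n$, the waiting time $\Delta_{n+1}$ is the first arrival after $\tau_n$ of the Poisson point process $\cN_{\xn}$, whose restriction to $\{\xn\}\times[\tau_n,\infty)$ is, given $\o$, an inhomogeneous Poisson process of intensity $s\mapsto \vf(\o_{\xn}(s))$ independent of $\{\cN_{\x}\}_{\x\neq\xn}$ and of $\{\cN_{\xn}\cap(C_{\xn}\times[0,\tau_n])\}$ by the strong Markov/memorylessness properties of the underlying $\M_\x$ together with the fact that $\{\xn=\x\}\cap\{\tau_n\leq t\}$ is measurable with respect to the Poisson points in $\bigcup_{\y}C_\y\times[0,t]$ and with respect to $(\o,\xi)$. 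Hence the same time-change argument as in the base case yields that, conditionally on $\mathcal G_n$, $I_n(\Delta_{n+1})$ is standard exponential and independent of $\mathcal G_n$; in particular, it is independent of $(\o,\xi,I_0(\Delta_1),\dots,I_{n-1}(\Delta_n))$.

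Putting these facts together, conditionally on $(\o,\xi)$ the sequence $(I_k(\Delta_{k+1}))_{k\geq 0}$ is i.i.d.\ standard exponential; since this conditional law is the universal law of an i.i.d.\ $\mathrm{Exp}(1)$ sequence, the joint sequence $(\o,\xi,(I_k(\Delta_{k+1}))_{k\geq 0})$ has the product structure required: $(\mathsf T_n-\mathsf T_{n-1})_{n\geq 1}$ is i.i.d.\ $\mathrm{Exp}(1)$ and independent of $(\o,\xi)$, which is equivalent to the stated PPP property. The only delicate step is the second paragraph's claim of the conditional Poisson structure of $\cN_{\xn}$ on $[\tau_n,\infty)$ given $\mathcal G_n$; I expect this to be the main obstacle, to be handled either by a standard strong-Markov argument for marked Poisson processes, or, more pedestrianly, by first conditioning on $(\o,\xi)$ and decomposing $\{\tau_n\leq t,\,\xn=\x\}$ into events measurable with respect to the Poisson points in a bounded time window, then applying the lack of memory of $\M_\x$ on the complementary window.
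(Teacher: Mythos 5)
Your proposal is correct and takes essentially the same approach as the paper's (very terse) proof: in both cases the heart of the matter is the observation that, conditionally on $\o$, $\xi$, and the past $\G_n=\sigma(\o,\xi,\tau_1,\dots,\tau_n)$, the time-changed increment $I_n(\D_{n+1})$ is a standard exponential, via the identity $\P(\D_{n+1}>I_n^{-1}(t)\mid\G_n)=e^{-I_n(I_n^{-1}(t))}=e^{-t}$, and then the product structure is pieced together by induction. You make explicit what the paper leaves implicit — namely, that the Poisson structure of $\cN_{\xn}$ above time $\tau_n$ and the measurability of $\{\tau_n\le t,\ \xn=\x\}$ with respect to the marks below time $t$ justify the conditional independence at each step — so your write-up is a cleaner and more careful rendering of the same argument rather than a genuinely different route.
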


\begin{proof}
	It is enough to check that, given $\o$ and $\xi$, $\left(\D_{n}\right)_{n\in\N_*}$ are the event times of a
	Poisson point process, which are thus independent of each other; the conclusion follows readily from the fact that
	\begin{equation*}
		\P\big(I_n(\D_{n+1})>t\big)=
		\P\big(\D_{n+1}>I_n^{-1}(t)\big)=
		\Em\left[e^{-I_n\left(I_n^{-1}(t)\right)}\right]
		=e^{-t},~ t\in\R_+. 
		\label{2.26}
	\end{equation*}
\end{proof}

We thus have an alternative construction of $X$, as follows.
Let $\o$, $\xi$ be as described at the beginning of the section. Let also $\sV=\left(\sV_n\right)_{n\in\N}$ 
be an indepent family of standard exponential random variables. Then, given $\o$, set $X(0)=\xz\equiv \zero$ and $\tau_{{0}}=0$,
and define
\begin{equation}
	\tau_{1}=I^{-1}_0(\sV_1).
	\label{2.30}
\end{equation}
\noindent For all $t\in (0,\tau_{{1}})$, $X(t)=X(0)$ and
\begin{equation}
	X(\tau_{{1}})=X(0)+\xi_1=\xu.
	\label{2.31}
\end{equation}
set, inductively,
\begin{equation}
	\tau_{n}=\tau_{n-1}+I_{n-1}^{-1}(\sV_n),
	\label{2.32}
\end{equation}
\noindent and for $t\in\left(\tau_{n{-}{1}},\tau_{n}\right)$, $X(t)=
X({\tau_{n{-}{1}}})$ and
\begin{equation}
	X(\tau_{{n}})=X(\tau_{n-1})+\xi_n=\xn.
	\label{2.33}
\end{equation} 

\noindent 

We have thus completed the alternative construction of $X$. Notice that we have made use of $\o$ and $\xi$, as in the original construction,
but replaced $\M$ of the latter construction by  $\sV$ as the remaining ingredient.
The alternative construction comes in handy in a coupling argument we develop in order to prove a law of large numbers for the jump times of $X$.



\section{Limit theorems under  $\Pz$}
\label{conv}

\setcounter{equation}{0}

\noindent 
In this section state and prove two of our main results, namely a Law of Large Numbers and a Central Limit Theorem for $X$ under  $\Pz$\footnote{We recall that $\P_{\bw}$ represents the law of $X$ starting from $\o(0)=\bw$ and $\x_0=\zero$.} and under the following extra conditions on $\vf$:
\begin{equation}\label{vf}
	\vf \mbox { is non increasing},\, \vf(0)=1, \mbox{ and } \lim_{n\to\infty}\vf(n)=0.
\end{equation}
The statements are provided shortly, and the proofs are presented in the second and third subsections below, respectively. The main ingredient for these results is a Law of Large Numbers for the jump time of $X$, which in turn uses a stochastic domination result for the distribution of the environment seen by the particle at jump times; both results, along with other preliminay material, are developed  in the first subsection below.

In order to state the main results of this section, we need the following preliminaries and further conditions on $\p,\q$. Let $\nu$ denote the invariant distribution of $\o_\zero$, such that, as is well known, $\nu_n=$ const $\prod_{i=1}^n\frac{p_{i-1}}{q_i}$, for $n\in\N$, where the latter product is conventioned to equal 1 for $n=0$. Next set $\rho_n=\frac{p_n}{q_n}$, $R_n=\prod_{i=1}^n\rho_i$ and $S_n=\sum_{i\geq n} R_i$ , $n\geq1$,
and let $R_0=1$. These quantities are well defined and, in particular, it follows from~\eqref{erg} that the latter sum is finite for all $n\geq1$.

We will require the following extra condition on $\p,\q$, in addition to those imposed in the paragraph of~\eqref{erg} above: we will assume
\begin{equation}\label{extracon}
	\sum_{n\geq1}\frac{S_n^2}{R_n}<\infty.
\end{equation}
We note that it follows from our previous assumptions on $\p,\q$ that~\eqref{extracon} is stronger than~\eqref{erg}, since $S_n\geq R_n$ for all $n$.
The relevance of this condition is that it implies 
the two conditions to be introduced next. 

Let $\w$ denote the embedded chain of $\omega_\zero$, and, for $n\geq0$, let $T_n$ denote the first passage time of $\w$ by $n$, namely, $T_n=\inf\{i\geq0:\,\w_i=n\}$, with the usual convention that $\inf\emptyset=\infty$.
Condition~\eqref{extracon} is equivalent, as will be argued in Appendix~\ref{app}, to either
\begin{equation}\label{extracor}
	\Er_\nu(T_0)<\infty \mbox{ or } \Er_1(T_0^2)<\infty. \footnotemark
\end{equation}
\footnotetext{We note that $\w$ has the same invariant distribution as $\o_\zero$, namely $\nu$.}
It may be readily shown to be stronger than asking that $\nu$ have a finite first moment, and a finite second moment of $\nu$ implies it, 
under our conditions on $\p,\q$ \footnote{It looks as though a finite second moment of $\nu$ may be a necessary condition for it, as well.}. 
Conditions~\eqref{extracor}  will be required in our arguments for the following main results of this section --- 
they are what we meant by 'strongly ergodic' in the abstract. 
See Remark~\ref{relax} at the end of this section. 

\begin{teorema}[Law of Large Numbers for $X$] 
	
	Assume the above conditions and that $\E(\|\xi_1\|)<\infty$. Then there exists $\mu\in(0,\infty)$ such that
	\begin{equation}
		\frac{X(t)}{t}\to \frac{\E(\xi_1)}{\mu} ~~\P_{\zero}\mbox{-a.s.}~\text{as}~t\to\infty.
		\label{eq:63}
	\end{equation}
	\label{teo:3.1}
\end{teorema}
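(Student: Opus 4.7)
The plan is to derive Theorem~\ref{teo:3.1} by combining two separate limit theorems with a standard renewal-type inversion.

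First, since $\xi = (\xi_n)_{n \in \N_*}$ is i.i.d.\ and independent of the environment $\o$, and $\E(\|\xi_1\|) < \infty$, the classical strong LLN applied to the embedded chain yields
\[
\frac{\xn}{n} = \frac{\xi_1 + \cdots + \xi_n}{n} \longrightarrow \E(\xi_1) \quad \P_\zero\text{-a.s.}
\]
(recall $\x_0 = \zero$). Second --- this being the substantive task, to be carried out in Subsection~\ref{sec:3.1} --- one must establish the LLN
\[
\frac{\tn}{n} \longrightarrow \mu \quad \P_\zero\text{-a.s.}
\]
for some $\mu \in (0, \infty)$. Granted this, define $N(t) := \max\{n : \tn \leq t\}$; the non-explosiveness of $X$ already noted in Section~\ref{mod} gives $N(t) \to \infty$ $\P_\zero$-a.s.\ as $t \to \infty$. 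The usual renewal-type inversion from $\tn/n \to \mu$ together with $\tau_{N(t)} \le t < \tau_{N(t)+1}$ yields $N(t)/t \to 1/\mu$ a.s. Since $X(t) = \sx_{N(t)}$, composition gives
\[
\frac{X(t)}{t} = \frac{\sx_{N(t)}}{N(t)} \cdot \frac{N(t)}{t} \longrightarrow \E(\xi_1) \cdot \frac{1}{\mu} = \frac{\E(\xi_1)}{\mu},
\]
which is the stated claim.

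The main obstacle is therefore the LLN for $\tn$. The natural tool is Kingman's subadditive ergodic theorem, applied to the increments $\tau_{n+m} - \tn$ under suitable shifts of the joint $(\o,\xi,\sV)$-space provided by the alternative construction of Subsection~\ref{sec:2.2}. Subadditivity is built into the dynamics: given the state of everything at $\tn$, the time $\tau_{n+m} - \tn$ for the next $m$ jumps equals in law a fresh copy of $\tau_m$ started from the environment $\o(\tn)$ left behind by the particle. The delicate point for applying Kingman is an upper bound $\E(\tn)/n \le C$, because $\vf$ has no positive lower bound, and so $\D_n$ has no useful deterministic upper bound. This is precisely where the strong ergodicity assumption \eqref{extracon}/\eqref{extracor} enters: starting from the product equilibrium $\nu^{\otimes \Zd}$, one argues --- using the non-increasing character of $\vf$, which makes the particle preferentially leave sites with large environmental values --- that $\o_{\xn}(\tn)$ is stochastically dominated by $\nu$; then $\Er_\nu(T_0)<\infty$ produces the required linear bound on $\E(\tn)$. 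For the initial configuration $\zero$ addressed by Theorem~\ref{teo:3.1}, a monotone coupling compares $\P_\zero$ to the equilibrium case: the $\zero$-started environment stays pointwise below the equilibrium one, so $\vf$-rates are larger, jump times $\tn$ are smaller, and the same linear upper bound on $\E_\zero(\tn)$ survives. Finally, positivity of $\mu$ is immediate: $\vf \le 1$ forces each $\D_n$ to stochastically dominate a standard exponential $Z_n$ (as already noted in Section~\ref{mod}), so $\mu \ge 1 > 0$.
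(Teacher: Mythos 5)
Your outline matches the paper's structure: the SLLN for the embedded chain $(\xn)$, the LLN $\tn/n\to\mu$ of Proposition~\ref{prop:3.1}, and a renewal inversion, exactly as in Subsection~\ref{sec:3.3/2}; and Proposition~\ref{prop:3.1} is indeed proved by a subadditive-ergodic argument whose expectation control is supplied by the stochastic-domination lemmas of Subsection~\ref{sec:3.1}. Two of your sketches need sharpening, though. First, the Markov-property observation that $\tau_{n+m}-\tn$ is in law a fresh $\tau_m$ started from $\o(\tn)$ gives only an \emph{exact additive} decomposition with non-stationary increments; Kingman does not apply to it directly, and ``subadditivity is built into the dynamics'' is not quite right. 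The paper's device is the array $\sL_{m,n}$, obtained by \emph{restarting} the environment from the all-zero configuration at $\tau_m$: by monotonicity of $\vf$ this only accelerates the walk, so $\sL_{m,n}\leq\tau_n-\tau_m$, i.e.\ $\sL_{0,n}\geq\sL_{0,m}+\sL_{m,n}$ is a bona fide \emph{super}additive array, and the restarting makes the diagonal blocks $\sL_{nk,(n+1)k}$ independent (hence ergodic). That restart is the essential trick and must be made explicit. Second, your heuristic for the domination step is reversed: since $\vf$ is non-increasing, \emph{large} environment values yield \emph{small} jump rates, so conditionally on jumping the particle is likelier to do so when the environment is \emph{low} --- which is what biases $\o_\zero(\tau_1)$ below $\nu$ (Lemmas~\ref{coup}, \ref{coup2}, \ref{domi1}, yielding $\nu e^{t\sQ^\psi}\preceq\nu$). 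Moreover the linear bound $\E_\zero(\tn)\leq n\,\E_{\hat\nu}(\tau_1)$ is obtained not from a direct domination of $\o_{\xn}(\tn)$, which one does not control, but from an auxiliary \emph{dominating} environment $\bro$ whose marginals at its own jump times $\bret_n\geq\tn$ are iid $\nu$ (Lemma~\ref{domult}, Corollary~\ref{domesp}), plus the finiteness Lemma~\ref{fin}. Your argument for $\mu\geq1$ via $\D_n\geq\sV_n$ from the alternative construction is correct and at least as clean as the paper's.
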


Here and below $\|\cdot\|$ is the sup norm in $\Zd$.

\begin{teorema} [Central Limit Theorem for $X$] 
	
	Assume the above conditions  and that $\E(\|\xi_1\|^2)<\infty$ and $\E(\xi_1)=\zero$. Then, for $\Pm_{\zero}$-a.e. $\o$, we have that
	\begin{equation}
		\frac{X(t)}{\sqrt{ t/\mu}}\Rightarrow N_d(\zero,\Sigma) \,\mbox{ under }\, P^{^{\o}}, 
		\label{eq:75}
	\end{equation}
	\label{teo:3.2}
	\noindent where $\Sigma$ is the covariance matrix of $\xi_1$, and $\mu$ is as in Theorem~\ref{teo:3.1}.
\end{teorema}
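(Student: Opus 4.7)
The plan is to decompose $X(t)=\sum_{k=1}^{N(t)}\xi_k$, where $N(t):=\sup\{n\geq 0:\tau_n\leq t\}$ counts the jumps of $X$ by time $t$ (using that $\xz=\zero$), and then combine the ordinary CLT for the iid sequence $(\xi_k)$ with the LLN for $N(t)$ via Anscombe's theorem for randomly indexed sums.

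Two ingredients are required. First, the work of Subsection~\ref{sec:3.1} (the main technical piece underlying Theorem~\ref{teo:3.1}) gives $\tau_n/n\to\mu$ $\P_\zero$-a.s., which by Fubini upgrades to a quenched statement: for $\Pm_{\zero}$-a.e.~$\o$, $\tau_n/n\to\mu$ $P^{^{\o}}$-a.s. The standard inversion of first-passage/count relations then yields $N(t)/(t/\mu)\to 1$ $P^{^{\o}}$-a.s., and \emph{a fortiori} in $P^{^{\o}}$-probability. Second, since the sequence $\xi$ was constructed independently of $\o$, under the quenched law $P^{^{\o}}$ the increments $(\xi_k)_{k\in\N_*}$ remain iid with mean $\zero$ and covariance matrix $\Sigma$, so the classical multivariate CLT gives $n^{-1/2}\sum_{k=1}^n\xi_k\Rightarrow N_d(\zero,\Sigma)$ under $P^{^{\o}}$.

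Anscombe's theorem for random indices (see e.g.\ A.~Gut, \emph{Stopped Random Walks}) now applies: the iid property with finite second moment makes the partial sums uniformly continuous in probability, and $N(t)/(t/\mu)\to 1$ in $P^{^{\o}}$-probability, so under $P^{^{\o}}$,
\[
\frac{X(t)}{\sqrt{t/\mu}}=\sqrt{\frac{N(t)}{t/\mu}}\cdot\frac{1}{\sqrt{N(t)}}\sum_{k=1}^{N(t)}\xi_k \;\Longrightarrow\; N_d(\zero,\Sigma),
\]
which is precisely~\eqref{eq:75}.

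The hard work lies entirely in the quenched LLN for $\tau_n$, which relies on the stochastic-domination and subadditive ingredients developed in Subsection~\ref{sec:3.1}; once that is in hand, the CLT reduces to Anscombe's theorem plus the easy but essential observation that $\xi$ remains iid, independently of $\o$, under the conditional law $P^{^{\o}}$. Nothing in the scheme depends on $d$, on the specific value of $\mu$, or on $\vf$ beyond what is already needed to secure the quenched LLN, so no further conditions on the environment are introduced at this stage.
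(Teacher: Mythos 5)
Your proof is correct and takes essentially the same route as the paper's: both combine the quenched LLN for $\tau_n$ (and hence for $\sN_t$) with the ordinary multivariate CLT for the iid embedded-chain increments, the one technical point being uniform continuity in probability of the partial sums $\sum_{k\leq n}\xi_k$ so that the random index may be replaced by $\lfloor\gamma t\rfloor$. Where you cite Anscombe's theorem for this, the paper proves the same content by hand via the additive decomposition $X(t)/\sqrt{\gamma t}=\big(\sx_{\sN_t}-\sx_{\lfloor\gamma t\rfloor}\big)/\sqrt{\gamma t}+\sx_{\lfloor\gamma t\rfloor}/\sqrt{\gamma t}$ together with Kolmogorov's maximal inequality.
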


In the next section we will state a CLT under more general initial environment conditions (but restricting to homogeneous cases of the environmental BD dynamics).
As for the mean zero assumption in Theorem~\ref{teo:3.2}, going beyond it would require substantially more work than we present here, under our approach; see Remark~\ref{ext_clt} at the end of this section.

\begin{subsection}{Law of large numbers for the jump times of $X$}
	\label{sec:3.1}
	
	\noindent In this subsection, we prove a Law of Large Numbers for $(\tn)_{n\in\N}$ under $\P_\zero$; this is the key ingredient in our arguments for the main results of this section; see Proposition~\ref{prop:3.1} below. 
	Our strategy for proving the latter result is to establish suitable stochastic domination of the environment by a modified environment, leading to a corresponding domination for jump times; we develop this program next.

	\medskip

	We start by recalling some well known definitions. Given two probabilities on $\N$, $\upsilon_1$ and $\upsilon_2$, 
	we indicate by $\upsilon_1\preceq \upsilon_2$ that $\upsilon_1$ is stochastically dominated by $\upsilon_2$, i.e., 
	\begin{equation}
		\upsilon_1\left(\N~\setminus~ \mathsf{A}_k \right)\leq
		\upsilon_2\left(\N~\setminus~ \mathsf{A}_k\right),
		\quad \mathsf{A}_k:=\left\lbrace 0,\ldots, k\right\rbrace, ~ \forall~k\in\N.
	\end{equation} 
	We equivalently write, in this situation, $X_1\preceq \upsilon_2$, if $X_1$ is a random variable distributed as $\upsilon_1$.

	Now let $\sQ$ denote the generator of $\o_\zero$ (which is a {\em Q-matrix}), and consider the following matrix
	\begin{equation}
		\sQ^\psi=D\sQ,~\textrm{with}~D=\mathrm{diag}\{\psi(n)\}_{n\in\N},
		\label{eq:3.3}
	\end{equation}
	where $\psi:\N\to[1,\infty)$ is such that $\psi(n)=1/\vf(n)$ for all $n$, with $\vf$ as defined in the paragraph of~\eqref{2.4} above.
	\noindent 
	Notice that $\sQ^\psi$ is also a Q-matrix, and that it generates a birth-and-death process on $\N$, say $\cho_\zero$, with transition rates given by
	\begin{equation}
		\sQ^\psi(n,n+1)=\psi_np_n=:p^\psi_n,\,n\in\N\, ; \quad \sQ^\psi(n,n-1)=\psi_nq_n=:\qp_n,\, n\in\N_*; \footnotemark
		\label{3.16}
	\end{equation}
	\footnotetext{We occasionally use $g_n$ to denote $g(n)$, for $g:\N\to\R$.}
	this is a positive recurrent process, with invariant distribution $\nu^\psi$ on $\N$ such that 
	\begin{equation*}
		\nu^\psi_ n=\mbox{const }\prod_{i=1}^n\frac{\pp_{i-1}}{\qp_i},\,n\in\N,
	\end{equation*}
	with a similar convention for the product as for $\nu$. 
	One may readily check  that $\nu_\psi\preceq\nu$, since $\psi$ is increasing. 
	The relevance of $\cho_\zero$ in the present study issues from the following strightforward result. Recall~\eqref{2.18}.
	
	\begin{lema}
		Suppose $\omega_\zero(0)\sim\cho_\zero(0)$. Then
		\begin{equation}\label{coup1}
			(\omega_\zero(t),\,t\in\R_+)\sim(\cho_\zero(I_0(t)),\,t\in\R_+).
		\end{equation}
		\label{coup}
	\end{lema}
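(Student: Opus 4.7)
The plan is to establish \eqref{coup1} as a pathwise time-change identity. Given $\omega_\zero$, I will construct $\cho_\zero$ explicitly on the same probability space by inverting the clock $I_0$, verify that this $\cho_\zero$ has the law of a $BDP(\p^\psi,\q^\psi)$ started from $\omega_\zero(0)$, and then read off \eqref{coup1} directly from the construction.

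First I would observe that $I_0\colon\R_+\to\R_+$ is continuous, strictly increasing (since $\vf>0$), and satisfies $I_0(\infty)=\infty$ $\Pm$-a.s. --- the latter because $\omega_\zero$ is recurrent under \eqref{erg} and so spends infinite total time at $0$, where $\vf(0)=1$. Setting $J:=I_0^{-1}$, I define $\cho_\zero(t):=\omega_\zero(J(t))$, whence $\cho_\zero(I_0(t))=\omega_\zero(t)$ holds pathwise, and \eqref{coup1} will follow as soon as the standalone law of $\cho_\zero$ is identified.

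The main step is thus to verify that the so-defined $\cho_\zero$ is indeed a $BDP(\p^\psi,\q^\psi)$ starting at $\omega_\zero(0)$. Since the time change $J$ is continuous and strictly increasing, $\cho_\zero$ has the same jump chain $(W_k)_{k\in\N}$ as $\omega_\zero$, which is the Markov chain on $\N$ with transitions $p_n$ up and $q_n$ down --- precisely the embedded chain of a $BDP(\p^\psi,\q^\psi)$. For the holding times, if $\tau_k$ denotes the $k$-th jump time of $\omega_\zero$, then the $k$-th jump time of $\cho_\zero$ is $I_0(\tau_k)$, and
\begin{equation*}
I_0(\tau_k)-I_0(\tau_{k-1})\,=\,\vf(W_{k-1})\,(\tau_k-\tau_{k-1}),
\end{equation*}
using that $\omega_\zero\equiv W_{k-1}$ on $[\tau_{k-1},\tau_k)$. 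Conditionally on $(W_k)$, the gaps $\tau_k-\tau_{k-1}$ are independent $\mathrm{Exp}(1)$, hence the holding times of $\cho_\zero$ are conditionally independent with the $k$-th one $\mathrm{Exp}(\psi(W_{k-1}))$. Since $\psi_n=\pp_n+\qp_n$, this is exactly the jump-chain and holding-time decomposition of a $BDP(\p^\psi,\q^\psi)$, completing the verification.

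The one point demanding some care is the $\Pm$-a.s.~identity $I_0(\infty)=\infty$, without which the time change would not be defined on all of $\R_+$; this however reduces immediately to recurrence of $\omega_\zero$ plus $\vf(0)=1$. Beyond that, the argument is a routine jump-chain and exponential-holding-time computation, with no genuine obstacle.
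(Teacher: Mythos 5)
Your proof is correct and it is precisely the formalization of the time-change coupling that the paper leaves implicit: the paper supplies no written proof for Lemma~\ref{coup}, merely calling the result straightforward and illustrating the coupling in Figure~\ref{fig:3.1}, which depicts exactly your construction (same jump chain, each holding time $\cE_k$ of $\omega_\zero$ at state $m$ mapped to $\vf(m)\cE_k$ on the $\cho_\zero$ timeline). Your identification of the jump-chain/holding-time decomposition --- noting $\pp_n/\qp_n=p_n/q_n$ so the embedded chains agree, and $\pp_n+\qp_n=\psi_n$ so the rescaled holding time at $n$ is $\mathrm{Exp}(\psi_n)$ --- together with the recurrence argument for $I_0(\infty)=\infty$, matches the paper's intent exactly.
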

	We have the following immediate consequence from this and Lemma~\ref{lema:2.2}.
	\begin{corolario}
		Let $\sV_1$ be a standard exponential random variable, independent of $\cho_\zero$. Then
		\begin{equation}
			\omega_\zero(\tau_1)\sim\cho_\zero(\sV_1).
			\label{coup3}
		\end{equation}
		\label{coup2}
	\end{corolario}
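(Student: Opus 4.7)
The plan is to combine Lemma~\ref{coup} with Lemma~\ref{lema:2.2} at $n=0$, essentially by a single substitution. The first ingredient identifies $I_0(\tau_1)$ as a standard exponential independent of the environment: indeed $I_0(\tau_1)=I_0(\Delta_1)=\mathsf T_1$, the first point of the rate-one Poisson process produced by Lemma~\ref{lema:2.2}, and that process is independent of $\omega$; equivalently, the alternative construction of Section~\ref{sec:2.2} gives $\tau_1=I_0^{-1}(\sV_1)$, so $I_0(\tau_1)=\sV_1$, a standard exponential independent of $\omega_\zero$.

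The second ingredient, Lemma~\ref{coup}, applied with the common initial condition $\omega_\zero(0)=\cho_\zero(0)=0$ that prevails under $\P_\zero$, supplies the process-level identity $\omega_\zero(t)=\cho_\zero(I_0(t))$, which I read as an explicit coupling (or, equivalently, as equality in law of the two paths). Specializing to $t=\tau_1$ and inserting the identification above yields
\[
\omega_\zero(\tau_1)=\cho_\zero(I_0(\tau_1))=\cho_\zero(\sV_1),
\]
which is the claim, modulo checking that $\sV_1$ is independent of $\cho_\zero$ and not merely of $\omega_\zero$.

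That independence step is the only point deserving attention, and is the main (minor) obstacle. Under the coupling of Lemma~\ref{coup}, $\cho_\zero$ is a measurable functional of $\omega_\zero$ via the inversion $\cho_\zero(u)=\omega_\zero(I_0^{-1}(u))$, which is well defined on $\R_+$ because $I_0(\infty)=\infty$ almost surely (by the recurrence of $\omega_\zero$ together with $\varphi(0)=1$, so that $\omega_\zero$ spends positive time at states where $\varphi$ stays away from $0$). Independence of $\sV_1$ from $\omega_\zero$ therefore transfers to independence from $\cho_\zero$, closing the argument. Should one prefer to bypass the coupling language, the same conclusion follows from a direct computation: conditioning on $\omega$, writing
\[
\E^\omega\!\left[f(\omega_\zero(\tau_1))\right]=\int_0^\infty f(\omega_\zero(t))\,\vf(\omega_\zero(t))\, e^{-I_0(t)}\,dt,
\]
performing the change of variable $u=I_0(t)$, taking expectations, and invoking Lemma~\ref{coup} to identify the law of $(\omega_\zero(I_0^{-1}(u)))_{u\ge 0}$ with that of $(\cho_\zero(u))_{u\ge 0}$, so that the right-hand side becomes $\E[f(\cho_\zero(\sV_1))]$ with $\sV_1\sim\mathsf{Exp}(1)$ independent of $\cho_\zero$.
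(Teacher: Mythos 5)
Your proposal is correct and follows exactly the route the paper intends: the paper presents this corollary without a proof block, stating that it is an ``immediate consequence'' of Lemma~\ref{coup} and Lemma~\ref{lema:2.2}, and your argument spells out precisely that combination---identify $I_0(\tau_1)=\sV_1$ as a standard exponential independent of $\o$ via Lemma~\ref{lema:2.2} (or the alternative construction), then substitute into the coupling $\cho_\zero(u)=\o_\zero(I_0^{-1}(u))$ of Lemma~\ref{coup}, with the independence of $\sV_1$ from $\cho_\zero$ inherited because $\cho_\zero$ is a measurable functional of $\o_\zero$ under that coupling. The only nitpick is that you restrict to the initial condition $\o_\zero(0)=\cho_\zero(0)=0$ under $\P_\zero$, whereas the corollary (like Lemma~\ref{coup}) holds for any common initial distribution; this does not affect the validity of the argument.
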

	Figure~\ref{fig:3.1} illustrates a coupling behind~(\ref{coup1},\ref{coup3}).
	\begin{figure}[!htb]
		\centering
         \definecolor{qqtttt}{rgb}{0,0.2,0.2}
         \definecolor{xdxdff}{rgb}{0.49,0.49,1}
         \definecolor{qqwwcc}{rgb}{0,0.4,0.8}
         \definecolor{ffttww}{rgb}{1,0.2,0.4}
         \begin{tikzpicture}[line cap=round,line join=round,>=triangle 45,x=0.8cm,y=0.7cm]
         	\clip(-2.10,-2.27) rectangle (15.25,6.93);
         	\draw (0.76,0)-- (3,0);
         	\draw (3,0)-- (7,0);
         	\draw (7,0)-- (7.9,0);
         	\draw (0.76,0.63)-- (3,1.88);
         	\draw (3,1.88)-- (7,2.88);
         	\draw (7,2.88)-- (7.9,3.33);
         	\draw [dash pattern=on 2pt off 2pt] (0,0)-- (0.76,0);
         	\draw [dash pattern=on 2pt off 2pt] (0.76,0)-- (0.76,0.63);
         	\draw [dash pattern=on 2pt off 2pt] (0.76,0.63)-- (0,0.63);
         	\draw [dash pattern=on 2pt off 2pt] (0,0.63)-- (0,0);
         	\draw [dash pattern=on 2pt off 2pt] (0,0)-- (3,0);
         	\draw [dash pattern=on 2pt off 2pt] (3,0)-- (3,1.88);
         	\draw [dash pattern=on 2pt off 2pt] (3,1.88)-- (0,1.88);
         	\draw [dash pattern=on 2pt off 2pt] (0,1.88)-- (0,0);
         	\draw [dash pattern=on 2pt off 2pt] (0,0)-- (7,0);
         	\draw [dash pattern=on 2pt off 2pt] (7,0)-- (7,2.88);
         	\draw [dash pattern=on 2pt off 2pt] (7,2.88)-- (0,2.88);
         	\draw [dash pattern=on 2pt off 2pt] (0,2.88)-- (0,0);
         	\draw [dash pattern=on 2pt off 2pt] (0,0)-- (7.9,0);
         	\draw [dash pattern=on 2pt off 2pt] (7.9,0)-- (7.9,3.33);
         	\draw [dash pattern=on 2pt off 2pt] (7.9,3.33)-- (0,3.33);
         	\draw [dash pattern=on 2pt off 2pt] (0,3.33)-- (0,0);
         	\draw [dotted,color=qqtttt] (13,0)-- (0,0);
         	\draw [dotted,color=qqtttt] (0,0)-- (0,5);
         	\draw [dotted,color=qqtttt] (0,5)-- (13,5);
         	\draw [dotted,color=qqtttt] (13,5)-- (13,0);
         	\draw (-2.1,5.35) node[anchor=north west] {$\ms{\sV_1:=I_0\left(\tau_1\right)}$};
         	\draw (12.65,0.0) node[anchor=north west] {$\ms{\tau_1}$};
         	\draw (13.85,0.0) node[anchor=north west] {$\ms{t}$};
         	\draw (0.07,0.0) node[anchor=north west] {$\ms{\cE_1}$};
         	\draw (1.55,0.0) node[anchor=north west] {$\ms{\cE_2}$};
         	\draw (4.7,0.0) node[anchor=north west] {$\ms{\cE_3}$};
         	\draw (7.15,0.0) node[anchor=north west] {$\ms{\cE_4}$};
         	\draw (-1.2,0.67) node[anchor=north west] {$\ms{\vf_k\cE_1}$};
         	\draw (-1.47,1.60) node[anchor=north west] {$\ms{\vf_{k+1}\cE_2}$};
         	\draw (-1.47,2.69) node[anchor=north west] {$\ms{\vf_{k+2}\cE_3}$};
         	\draw (-1.47,3.45) node[anchor=north west] {$\ms{\vf_{k+1}\cE_4}$};
         	\draw [->] (0,0) -- (14.2,0);
         	\draw (-0.55,6.7) node[anchor=north west] {$\ms{I_0(t)}$};
         	\draw (7.9,3.33)-- (8.3,3.73);
         	\draw (0,0)-- (0.76,0.63);
         	\draw [->] (0,0) -- (-0.01,6.05);
         	
         	\begin{scriptsize}
         		\fill [color=black] (0,0) ++(-1.5pt,0 pt) -- ++(1.5pt,1.5pt)--++(1.5pt,-1.5pt)--++(-1.5pt,-1.5pt)--++(-1.5pt,1.5pt);
         		\fill [color=ffttww] (0.76,0) ++(-1.5pt,0 pt) -- ++(1.5pt,1.5pt)--++(1.5pt,-1.5pt)--++(-1.5pt,-1.5pt)--++(-1.5pt,1.5pt);
         		\fill [color=ffttww] (3,0) ++(-1.5pt,0 pt) -- ++(1.5pt,1.5pt)--++(1.5pt,-1.5pt)--++(-1.5pt,-1.5pt)--++(-1.5pt,1.5pt);
         		\fill [color=ffttww] (7,0) ++(-1.5pt,0 pt) -- ++(1.5pt,1.5pt)--++(1.5pt,-1.5pt)--++(-1.5pt,-1.5pt)--++(-1.5pt,1.5pt);
         		\fill [color=ffttww] (7.9,0) ++(-1.5pt,0 pt) -- ++(1.5pt,1.5pt)--++(1.5pt,-1.5pt)--++(-1.5pt,-1.5pt)--++(-1.5pt,1.5pt);
         		\fill [color=qqwwcc,shift={(0,0.63)},rotate=90] (0,0) ++(0 pt,2.25pt) -- ++(1.95pt,-3.375pt)--++(-3.9pt,0 pt) -- ++(1.95pt,3.375pt);
         		\fill [color=qqwwcc,shift={(0,1.88)},rotate=90] (0,0) ++(0 pt,2.25pt) -- ++(1.95pt,-3.375pt)--++(-3.9pt,0 pt) -- ++(1.95pt,3.375pt);
         		\fill [color=qqwwcc,shift={(0,2.88)},rotate=90] (0,0) ++(0 pt,2.25pt) -- ++(1.95pt,-3.375pt)--++(-3.9pt,0 pt) -- ++(1.95pt,3.375pt);
         		\fill [color=qqwwcc,shift={(0,3.33)},rotate=90] (0,0) ++(0 pt,2.25pt) -- ++(1.95pt,-3.375pt)--++(-3.9pt,0 pt) -- ++(1.95pt,3.375pt);
         		\fill [color=qqwwcc,shift={(0,5)},rotate=90] (0,0) ++(0 pt,2.25pt) -- ++(1.95pt,-3.375pt)--++(-3.9pt,0 pt) -- ++(1.95pt,3.375pt);
         		\fill [color=xdxdff] (13,0) circle (1.5pt);
         		\fill [color=black,shift={(8.3,-0.2)},rotate=270] (0,0) ++(0 pt,1.5pt) -- ++(1.3pt,-2.25pt)--++(-2.6pt,0 pt) -- ++(1.3pt,2.25pt);
         		\fill [color=black,shift={(8.5,-0.2)},rotate=270] (0,0) ++(0 pt,1.5pt) -- ++(1.3pt,-2.25pt)--++(-2.6pt,0 pt) -- ++(1.3pt,2.25pt);
         		\fill [color=black,shift={(8.7,-0.2)},rotate=270] (0,0) ++(0 pt,1.5pt) -- ++(1.3pt,-2.25pt)--++(-2.6pt,0 pt) -- ++(1.3pt,2.25pt);
         	\end{scriptsize}
         	
         \end{tikzpicture}
		\caption{$\cE_1,\cE_2,\ldots$ are iid standard exponentials; $x$(resp., $y$)-axis indicates constancy intervals of $\o_\zero$ (resp., $\cho_\zero$) in a realization where with $\o_{\zero}(0)=\cho_{\zero}(0)=  k\in\N$.}
		\label{fig:3.1} 
	\end{figure}

	The following result is most certainly well known, and may be argued by a straightforward coupling argument.
	
	\begin{lema}
		Let $\mu$ and $\mu'$ denote two probabilities on $\N$ such that $\mu\preceq \mu'$. Then, 
		for all $t\in\R_+$,
		\begin{equation}
			\mu e^{t\sQ}\preceq \mu' e^{t\sQ}. 
			\label{3.1}
		\end{equation}
		\label{lema:3.1}
	\end{lema}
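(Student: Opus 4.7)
The plan is to prove the claim via a pathwise coupling argument, exploiting the nearest-neighbor structure of birth-and-death chains. First, by a standard consequence of Strassen's theorem, the hypothesis $\mu\preceq\mu'$ allows us to realize on a common probability space two $\N$-valued random variables $\xi\sim\mu$ and $\xi'\sim\mu'$ with $\xi\leq\xi'$ almost surely. This reduces matters to showing that we can couple two BD chains with the same generator $\sQ$, started from $\xi$ and $\xi'$, so that the order is preserved for every $t\geq0$.

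Next, I would construct on an extended probability space two coupled BD processes $(X_t)_{t\geq0}$ and $(X'_t)_{t\geq0}$, both with generator $\sQ$, starting from $X_0=\xi$ and $X'_0=\xi'$, with $X_t\leq X'_t$ for every $t\geq0$ a.s. A convenient choice is the ``march together after meeting'' coupling: let the two processes evolve with independent Poisson clocks as long as $X_t<X'_t$, and from the first collision time $\sigma:=\inf\{t\geq0:X_t=X'_t\}$ onwards let them share a single driving mechanism so that $X_t=X'_t$ for $t\geq\sigma$. Since the chains only jump to nearest neighbors, while $X_t<X'_t$ the gap $X'_t-X_t$ changes by $\pm 1$ at each jump and therefore cannot become negative without first reaching $0$; the post-$\sigma$ coupling then prevents the ordering from ever being violated.

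Finally, having $X_t\leq X'_t$ a.s.\ yields $\{X_t\geq k+1\}\subseteq\{X'_t\geq k+1\}$, hence
\begin{equation*}
\mu e^{t\sQ}(\N\setminus\mathsf{A}_k)=\Pr(X_t\geq k+1)\leq\Pr(X'_t\geq k+1)=\mu' e^{t\sQ}(\N\setminus\mathsf{A}_k)
\end{equation*}
for every $k\in\N$ and $t\geq0$, which is precisely the claimed stochastic domination. The only mildly delicate point is the rigorous description of the coupled jump dynamics in continuous time; this can be handled either by a graphical construction using independent Poisson point processes attached to each state, or by defining the joint process directly through its generator on $\{(m,n)\in\N^2:m\leq n\}$ and verifying that this region is invariant. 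There is no real obstacle beyond routine bookkeeping, which is why the result is presented as well known.
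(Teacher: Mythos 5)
Your proposal is correct and follows exactly the route the paper indicates: the paper offers no details, stating only that the lemma ``may be argued by a straightforward coupling argument,'' which is precisely the Strassen-realization plus order-preserving nearest-neighbor coupling you carry out.
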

	
	Here and below $e^{t\sQ'}$ denotes the semigroup associated to an irreducible and recurrent Q-matrix $\sQ'$ on $\N$. 
	We have an immediate consequence of Lemma~\ref{lema:3.1}, as follows.
	
	\begin{corolario}
		If $\mu$ is a probability on $\N$ such that $\mu\preceq \nu$, then, for all $t\in\R_+$,
		\begin{equation}
			\mu e^{t\sQ}\preceq \nu.
			\label{3.0}
		\end{equation}
		\label{coro:3.0}
	\end{corolario}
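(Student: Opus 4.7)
The plan is to deduce the corollary from the preceding Lemma~\ref{lema:3.1} together with the invariance of $\nu$ under the semigroup $e^{t\sQ}$. The key observation is that $\nu$ is the invariant distribution of $\o_\zero$ (as recalled in the paragraph introducing condition~\eqref{extracon}), so by definition $\nu e^{t\sQ} = \nu$ for every $t \in \R_+$.

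First I would apply Lemma~\ref{lema:3.1} with $\mu' := \nu$. Since by hypothesis $\mu \preceq \nu$, the lemma yields
\begin{equation*}
\mu e^{t\sQ} \preceq \nu e^{t\sQ}
\end{equation*}
for every $t \in \R_+$. Then I would invoke the invariance of $\nu$ under $e^{t\sQ}$ to replace the right-hand side by $\nu$, producing
\begin{equation*}
\mu e^{t\sQ} \preceq \nu,
\end{equation*}
which is exactly~\eqref{3.0}.

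There is no real obstacle here: the corollary is a one-line consequence of Lemma~\ref{lema:3.1} plus invariance, both of which are in hand. The only thing to make sure of is that Lemma~\ref{lema:3.1} genuinely applies, which it does because $\sQ$ is the irreducible, recurrent (in fact positive recurrent, by~\eqref{erg}) generator of the birth-and-death chain $\o_\zero$, so the semigroup $e^{t\sQ}$ is exactly the object covered by the statement of the lemma.
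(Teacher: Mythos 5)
Your argument is correct and is exactly the one the paper intends: the corollary is stated as an immediate consequence of Lemma~\ref{lema:3.1}, and applying that lemma with $\mu' = \nu$ together with the invariance $\nu e^{t\sQ} = \nu$ gives the claim. Nothing more is needed.
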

	We present now a few  more substantial domination lemmas, 
	leading to a key ingredient for justifying 
	the main result of this subsection.
	
	\begin{lema}\label{domi1}
		Let $\sQ^\psi$ be as in (\ref{eq:3.3},\ref{3.16}). Then, for all $t\in\R_+$,
		\begin{equation}
			\nu e^{t\sQ^\psi}\preceq \nu. 
			\label{3.17}
		\end{equation}
		\label{lema:3.2}
	\end{lema}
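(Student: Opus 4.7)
My plan is to analyse the Radon--Nikodym density of $\mu_t:=\nu\,e^{t\sQ^\psi}$ with respect to the reversible invariant measure $\nu^\psi$ of $\cho_\zero$, show that it is non-decreasing in $n$ for every $t\ge 0$, and then read off the desired domination from a direct calculation of $\frac{d}{dt}\mu_t(\N\setminus\mathsf{A}_k)$ together with detailed balance.

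First I set $f_t(n):=(\mu_t)_n/\nu^\psi_n$. Since $\cho_\zero$ is a birth-and-death chain, $\nu^\psi$ is reversible for $\sQ^\psi$, i.e.\ $\nu^\psi_m\sQ^\psi(m,n)=\nu^\psi_n\sQ^\psi(n,m)$. Plugging this into the forward evolution $\partial_t\mu_t=\mu_t\sQ^\psi$ gives $\partial_t f_t=\sQ^\psi f_t$ with $\sQ^\psi$ acting on functions, so $f_t=e^{t\sQ^\psi}f_0$. The product formulas for $\nu$ and $\nu^\psi$ yield $f_0(n)\propto\psi(n)$, which is non-decreasing in $n$ because $\vf$ is non-increasing with $\vf(0)=1$ and $\psi=1/\vf$. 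Next, since $\cho_\zero$ is a birth-and-death chain on $\N$, its semigroup $e^{t\sQ^\psi}$ is stochastically monotone in the starting point (two copies started from $m\le n$ can be coupled to stay ordered via the standard \emph{synchronize-upon-meeting} coupling), which is equivalent to the statement that $e^{t\sQ^\psi}$ sends non-decreasing functions to non-decreasing ones. Hence $f_t$ is non-decreasing for every $t\ge 0$.

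Finally, in the sum $\sum_{n>k}(\mu_t\sQ^\psi)_n$ only the rows $m\in\{k,k+1\}$ contribute, giving
\[
\frac{d}{dt}\mu_t(\N\setminus\mathsf{A}_k)=(\mu_t)_k\,\pp_k-(\mu_t)_{k+1}\,\qp_{k+1}=\nu^\psi_k\,\pp_k\bigl[f_t(k)-f_t(k+1)\bigr]\le 0,
\]
where the second equality uses the detailed balance identity $\nu^\psi_k\pp_k=\nu^\psi_{k+1}\qp_{k+1}$ and the inequality uses the monotonicity of $f_t$ established above. Integrating in $t$, $\mu_t(\N\setminus\mathsf{A}_k)\le\mu_0(\N\setminus\mathsf{A}_k)=\nu(\N\setminus\mathsf{A}_k)$ for every $k$ and every $t\ge 0$, which is exactly $\nu\,e^{t\sQ^\psi}\preceq\nu$.

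The step I expect to require the most care is the preservation of non-decreasing functions by $e^{t\sQ^\psi}$; although this is essentially the functional counterpart of Lemma~\ref{lema:3.1} and follows from the standard monotone coupling of two birth-and-death processes on $\N$, the coupling must be set up so that the two copies not only evolve with the same jump structure but in fact move together once they meet, otherwise a single $\pm 1$ jump of one of them can violate the order.
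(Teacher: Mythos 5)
Your argument is correct in essence, but it takes a genuinely different route from the paper's, and two technical points need tightening.

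The paper's proof works directly with $\Pr'(\Y_t\leq l)$: it passes through both the forward and the backward Kolmogorov equations, introduces $d_n=\Pr_n(\Y_t\leq l)-\Pr_{n+1}(\Y_t\leq l)$ (positive by coupling), rearranges an infinite series (justified by a separate technical claim, their~\eqref{3.27}, which calls on the hitting-time moment bounds behind~\eqref{extracor}), and finally invokes reversibility to extract the sign. You instead track the Radon--Nikodym density $f_t=d\mu_t/d\nu^\psi$, use reversibility once to transfer the measure evolution to a function evolution, use monotone coupling once (via preservation of nondecreasing functions by the semigroup) to get that $f_t$ is nondecreasing, and close with a single boundary-flux identity plus detailed balance. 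The same two ingredients (reversibility and a coupling monotonicity) appear in both proofs, but your packaging is more transparent and, once the details are done carefully, sidesteps the series-rearrangement issue the paper has to address.

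The two places to be careful are the following. First, the step ``$\partial_t f_t=\sQ^\psi f_t$, hence $f_t=e^{t\sQ^\psi}f_0$'' is slippery because $f_0\propto\psi$ is unbounded, so the standard semigroup ODE does not apply off the shelf. It is cleaner to bypass the ODE entirely: reversibility of $\nu^\psi$ gives $P^\psi_{m,n}(t)/\nu^\psi_n=P^\psi_{n,m}(t)/\nu^\psi_m$, whence directly
\begin{equation*}
f_t(n)=\sum_m \nu_m\,\frac{P^\psi_{m,n}(t)}{\nu^\psi_n}=\sum_m f_0(m)\,P^\psi_{n,m}(t)=\Er_n\!\left[f_0(\Y_t)\right],
\end{equation*}
which is finite (since $(\mu_t)_n<\infty$ and $\nu^\psi_n>0$) and is manifestly nondecreasing in $n$ by the monotone coupling of $\Y$, even though $f_0$ is unbounded. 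Second, in the final flux computation you sum $\sum_{n>k}(\mu_t\sQ^\psi)_n$ over the \emph{infinite} side and claim only $m\in\{k,k+1\}$ contribute; that telescoping requires the boundary term $(\mu_t)_K\pp_K-(\mu_t)_{K+1}\qp_{K+1}$ to vanish as $K\to\infty$, or equivalently an absolute-summability estimate of the type $\sum_m(\mu_t)_m\psi_m<\infty$, which is not guaranteed under the paper's assumptions. The fix is trivial: differentiate the \emph{finite} sum $\Pr(\Y_t\leq k)=\sum_{n\leq k}(\mu_t)_n$ (term-by-term, justified since each $\partial_t(\mu_t)_j=(\mu_t\sQ^\psi)_j$ follows from the forward equations for $P^\psi$ plus dominated convergence with the bound $|\partial_t P^\psi_{n,j}(t)|\leq\pp_{j-1}+\psi_j+\qp_{j+1}$), obtain
\begin{equation*}
\frac{d}{dt}\Pr(\Y_t\leq k)=(\mu_t)_{k+1}\qp_{k+1}-(\mu_t)_k\pp_k=\nu^\psi_k\pp_k\bigl[f_t(k+1)-f_t(k)\bigr]\geq0,
\end{equation*}
and then negate. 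With these two adjustments your proof is complete and is a clean alternative to the one in the paper.
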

	\begin{proof}
		Let $\Y=\left(\Y_t\right)_{t\in\R_+}$ denote the birth-and-death process generated by
		$\sQ^\psi$ started from $\nu$. Set $P_{n,j}(t):=\Pr(\Y_t=j~\mathbin{\vert{}}~ \Y_0=n)$, $t\in\R_+$, $n,j\in\N$. 
		For $l\in\N$,
		\begin{equation}
			\Pr(\Y_t\leq l)=\sum_{j\leq l}\Pr(\Y_t=j)=
			\sum_{j\leq l}\sum_{n\geq 0}\nu_n P_{n,j}(t).
			\label{3.18}
		\end{equation}
		
		\noindent By Tonelli,
		\begin{equation}
			\Pr(\Y_t\leq l)=\sum_{n\geq 0} \sum_{j\leq l} \nu_n P_{n,j}(t).
			\label{3.19}
		\end{equation}

		Consider now Kolmogorov's forward equations for $\Y$, given by
		\begin{align}
			P'_{n,0}(t)&=-\pp_0P_{n,0}(t)+\qp_1P_{n,1}(t);\\
			P'_{n,j}(t)&=\pp_{j-1}P_{n,j-1}(t)-\psi_jP_{n,j}(t)+\qp_{j+1}P_{n,j+1}(t),~~j\geq 1;
			\label{3.20-3.21}
		\end{align}
		$n\geq 0$. It follows that
		\begin{equation}
			\left\lvert \sum_{{j\leq l}} \nu_nP'_{n,j}(t)\right\rvert=
			\nu_n\Big| \qp_{l+1}P_{n,l+1}(t)-\pp_{l}P_{n,l}(t)\Big|\leq \nu_n\psi_{l+1},
			\label{3.22}
		\end{equation}
		for all $t$; since $\nu$ is summable, we have that
		\begin{equation}
			\Pr'(\Y_t\leq l)=\sum_{n\geq 0}  \sum_{j\leq l}\nu_nP'_{n,j}(t).
			\label{3.23}
		\end{equation}
		
		We now make use  in \eqref{3.23}  of Kolmogorov's backward equations for $\Y$, given by
		\begin{align}
			P'_{0,j}(t)&=-\pp_0P_{0,j}(t)+\pp_0P_{1,j}(t)=\pp_0(P_{1,j}(t)-P_{0,j}(t));\\
			P'_{n,j}(t)&=\qp_{n}P_{n-1,j}(t)-\psi_nP_{n,j}(t)+\pp_{n}P_{n+1,j}(t),\nonumber\\ 
			&=\qp_{n}(P_{n-1,j}(t)-P_{n,j}(t))-\pp_{n}(P_{n,j}(t)-P_{n+1,j}(t)),\quad n\geq 1,
			\label{3.24,3.25}
		\end{align}
		$j\geq 0$. Setting $d_n:=\Pr_n(\Y_t\leq l)-\Pr_{n+1}(\Y_t\leq l)$, $n\in\mathbb N$,  
		we find that
		\begin{align}
			\Pr'(\Y_t\leq l)&=\sum_{j\leq l}\nu_0P'_{0,j}(t)+
			\sum_{n\geq 1}\sum_{j\leq l}\nu_nP'_{n,j}(t)\nn\\
			&=-\nu_0\pp_0d_0+\sum_{n {\geq} 1}\nu_n\big(\qp_nd_{n\text{-}{1}}-\pp_nd_n\big)\nn\\
			&=\sum_{n {\geq} 0}\nu_{n+1}\qp_{n+1}d_{n}-\sum_{n {\geq} 0}\nu_n\pp_nd_n,
			\label{3.26}
		\end{align}
		provided 
		\begin{equation}
			\sum_{n {\geq} 1}\nu_n\psi_n(d_{n\text{-}{1}}\vee d_{n})<\infty,
			\label{3.27}
		\end{equation}
		which we claim to hold; see justification below. We note that $d_n\geq0$ for all $n,l$ and $t$, as can be justified by a straightforward coupling argument.
		It follows that 
		\begin{align}
			\Pr'(\Y_t\leq l)&=\sum_{n {\geq} 0}(\nu_{n+1}\qp_{n+1}-\nu_{n}\pp_{n})d_{n}\nn\\
			&=\sum_{n {\geq} 0}(\psi_{n+1}\nu_{n+1}q_{n+1}-\psi_{n}\nu_{n}p_n)d_{n}\nn\\
			&=\sum_{n {\geq} 0}(\psi_{n+1}\nu_{n}p_{n}-\psi_{n}\nu_{n}p_n)d_{n}\nn\\
			&=\sum_{n {\geq} 0}(\psi_{n+1}-\psi_{n})\nu_{n}p_nd_{n}\geq0
			\label{3.27a}
		\end{align}
		since $\psi$ is nondecreasing, where the third equality follows by reversibility of $Y$. 
		
		We thus have that $\Pr(\Y_t\leq l)$ is nondecreasing in $t$ for every $l$; we thus have that
		\begin{equation}
			\nu(\mathsf{A}_l)=\Pr(\Y_0\leq l)\leq\Pr(\Y_t\leq l)
			\label{3.27b}
		\end{equation}
		for all $l$, and~\eqref{3.17} is established.
		
		\smallskip
		
		It remains to argue~\eqref{3.27}. Let $\H_n:=\inf\{s\geq 0: \Y_s=n\}$, $n\in\mathbb N$ be the hitting time of $n$ by $\Y$. For $n\geq l$,
		we have that
		\begin{align}
			d_n=&\Pr_n(\Y_t\leq l)-\int_0^t\Pr_{n+1}(\H_n\in \ds)
			\Pr_{n}(\Y_{t-s}\leq l)\ds\nn\\
			=&\int_0^t\Pr_{n+1}(\H_n\in \ds)\Big[\Pr_n(\Y_t\leq l)-
			\Pr_{n}(\Y_{t-s}\leq l)\Big]\ds\nn\\&+ 
			\Pr_n(\Y_t\leq l)\int_t^\infty \Pr_{n+1}(\H_n\in \ds)\ds\nn\\
			=&\int_0^t\Pr_{n+1}(\H_n\in \ds)\Big[\Pr_n(\Y_t\leq l,\Y_{t-s}> l)-
			\Pr_n(\Y_t> l,\Y_{t-s}\leq l)\Big]\ds\nn\\&+ \Pr_n(\Y_t\leq l)\Pr_{n+1}(\H_n>t)\nn\\
			=&:d_n'+d_n''
			\label{3.28}
		\end{align}
		
		Let now $V=\left(V_i\right)_{i\in \N_*}$ be a sequence of independent standard exponential random variables, 
		and consider the embedded chain 
		$\tilde{\Y}=\big(\tilde{\Y}_k\big)_{k\geq 0}$ of $\left(\Y_t\right)_{t\in\R_+}$, and $\tilde{\H}_n=\inf\{k\geq 0: \tilde{\Y}_k=n\}$. 
		Notice that $\tilde{\Y}$ is distributed as $\w$, and $\tilde{\H}_n$ is distributed as $T_n$, introduced at the beginning of the section.
		Let $V$ and $\tilde{\Y}$ be independent.
		Let us now introduce an auxiliary random vaiable $\H'_n=\sum\limits_{i=1}^{\tilde{\H}_n}V_i$, and note that, given that $\Y_0=n+1$,
		$\H_n\stackrel{{st}}{\preceq} \vf_{n+1}\H'_n$; it follows from this and the Markov inequality that
		\begin{equation}
			\Pr_{n+1}(\H_n>t)\leq \Pr_{n+1}(\H'_n>\psi_{n+1}t)\leq \frac{\vf_{n+1}\T_{n+1}}t\leq\mbox{ const }\vf_{n+1}\frac{S_n}{R_n}
			\label{3.29}
		\end{equation}
		(see Appendix~\ref{app}). It follows that 
		\begin{equation}
			\sum_{n {>} l}\nu_n\psi_nd''_{n-1}\leq\mbox{ const }\sum_{n {\geq} 1}\frac{\nu_n}{R_n}S_n\leq\mbox{ const }\sum_{n {\geq} 1}S_n<\infty
			\label{3.27c}
		\end{equation}
		by the ergodicity assumption on $\o_\zero$, and similarly $\sum_{n {\geq} 1}\nu_n\psi_nd''_{n}<\infty$.

		Now, by the Markov property
		\begin{align}
			\Pr_n(\Y_t\leq l,\Y_{t-s}> l)
			&=\sum_{j\geq l+1}\Pr_n(\Y_{t-s}=j)\Pr_j(\Y_{s}\leq l)\nn\\
			&\leq \sum_{j\geq l+1}\Pr_n(\Y_{t-s}=j)\Pr_{l+1}(\Y_{s}\leq l)\nn\\
			&\leq \sum_{j\geq l+1}\Pr_n(\Y_{t-s}=j)\left( 1-e^{-\psi_{l+1}s}\right)
			\leq 1-e^{-\psi_{l+1}s}.
			\label{3.31}
		\end{align}

		\noindent Thus,
		\begin{align}
			d'_n&\leq \int_0^t\Pr_{n+1}(\H_n\in \ds)(1-e^{-\psi_{l+1}s})\ds \leq 
			\Er_{n+1}\left(1-e^{-\psi_{l+1}\H_n}\right)\nn\\
			&\leq \psi_{l+1}\Er_{n+1}\left(H_n\right) \leq \psi_{l+1}\vf_{n+1}\T_{n+1},
			\label{3.32}
		\end{align}
		and, similarly as above, we find that $\sum_{n {\geq} 1}\nu_n\psi_n (d'_{n-1}+ d'_{n})<\infty$, and~\eqref{3.27} is established.
		
	\end{proof}

	In other words, if $\o_\zero(0)\sim\nu$ , then
	\begin{equation}\label{dom1}
		\o_\zero(\tau_1)\preceq\nu.
	\end{equation}
	
	Let us now assume that $\hat\mu_{\x,0}\preceq\nu$ for every $\x\in\Zd$.
	Based on the above domination results, we next construct a modification of  the joint process $(X,\o)$, to be denoted $(\brex,\bro)$, 
	in a coupled way to $(X,\o)$, so that $\bro$ has {\em less} spatial dependence than, and at the same time dominates $\o$ in a suitable way.
	The idea is to let  $\brex$ have the same embedded chain as $X$, and jump according to
	$\bro$ as $X$ jumps according to $\o$; we let $\bro$ evolve with the same law as $\o$ between its jump times, and at jump times
	we replace $\bro$ at the site where $\brex$ jumped from by a suitable dominating random variable distributed as $\nu$.
	Details follow.
	
	We first construct a sequence of environments between jumps of $\brex$, as follows.
	Let $(X,\o)$ be as above, starting from $X(0)=0$, $\o(0)\sim\hat\mu_0$, then, enlarging the original probability space if necessary,
	we can find iid random variables $\o^0_\x(0)$, $\x\in\Zd$,  distributed according to $\nu$, such that $\o^0_\x(0)\geq\o_\x(0)$, $\x\in\Zd$.
	
	We let now $\o^0$ evolve for $t\geq0$ in a coupled way with $\o$ in such a way that $\o^0_\x(t)\geq\o_\x(t)$, $\x\in\Zd$. 
	Let now $\bret_1$ be obtained from $\o^0$ in the same way as $\tau_1$ was obtained from $\o$, using the same $\M$ for $\o^1$ as for $\o$ 
	(recall definition from paragraph of~\eqref{2.2}); $\bret_{1}$ is the time of the first jump of $\brex$, and set $\brex(\bret_1)=\sx_1$. 
	Notice that $\bret_1\geq\tau_1$.
	
	Noticing as well that $\o^0_\x(\bret_1)$, $\x\ne0$, are independent  with common distribution $\nu$, and independent of $\o^0_\zero(\bret_1)$,
	and using~\eqref{dom1}, again enlarging the probability space if necessary, we find $\W_1$ distributed as $\nu$ such that 
	$\W_1\geq\o^0_\zero(\bret_1)$, with $\W_1$ is independent of $\o^0_\x(\bret_1)$, $\x\ne0$;
	and we make $\o^1_\zero(\bret_1)=\W_1$, and $\o^1_\x(\bret_1)=\o^0_\x(\bret_1)$, $\x\ne0$.
	Notice that $\o^1_\x(\bret_1)$, $\x\in\Zd$ are iid with marginals distributed as $\nu$.

	We now iterate this construction, inductively: given $\xi$, let us fix $n\geq1$, and suppose that for each $0\leq j\leq n-1$, we have  constructed $\bret_j$, and $\o^{j}(t),\,t\geq\bret_{j}$, with $\{\o^j_\x(\bret_j), \,\x\in\Zd\}$ iid with marginals distributed as $\nu$. We then define $\bret_n$ from $\o^{n-1}(\bret_{n-1})$ in the same way as $\tau_1$ was defined from $\o^0(0)$, but with the random walk originating in $\sx_{n-1}$, and with the
	marks of $\M$ in the upper half space from $\bret_{n-1}$; $\bret_{n}$ is the time of the $n$-th jump of $\brex$, and we set $\brex(\bret_{n})=\sx_{n}$. 
	
	Next, from~\eqref{dom1}, we obtain $\W_n\geq\o^{n-1}_{\x_{n-1}}(\bret_{n})$ such that $\{\W_n; \o^{n-1}_{\x}(\bret_{n}),\,\x\ne\sx_{n-1}\}$ is an iid family of random variables with marginals distributed as $\nu$, and define a $BDP(\p,\q)$ $(\o^n(t))_{t\geq\bret_n}$ starting from 
	$\{\o^n_\x(\bret_n)=\o^{n-1}_\x(\bret_n),\,\x\ne\sx_{n-1}; \,\o^n_{\x_{n-1}}(\bret_n)=\W_n\}$ so that 
	$\o^n_{\sx_{n-1}}(t)\geq\o^{n-1}_{\sx_{n-1}}(t)$, 
	$\o^n_\x(t)=\o^{n-1}_\x(t)$, $\x\ne\sx_{n-1}$, $t\geq\bret_n$.

	We finally define $\bro(t)=\o^n(t)$ for $t\in[\bret_{n},\bret_{n+1})$, $n\geq0$. This coupled construction of $(\o,\bro)$ has the following properties.
	
	
	\begin{lema}\label{domult}
		\mbox{}
		\begin{enumerate}
			\item \begin{equation}\label{comp1}
				\bro_\x(t)\geq\o_\x(t) \text{ for all }\,\x\in\Zd \text{ and }\, t\geq0;
			\end{equation}
			\item for each $n\geq0$,
			\begin{equation}\label{comp2}
				\bro_\x(\bret_n), \, \x\in\Zd,\text{ are iid random variables with marginals distributed as }\nu;
			\end{equation}
			\item for all $n\geq0$, we have that
			\begin{equation}\label{comp3}
				\tau_n\leq\bret_n.
			\end{equation}
		\end{enumerate}	
	\end{lema}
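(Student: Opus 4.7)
My plan is to prove all three assertions by a joint induction on $n$, leaning on the explicit coupling that was set up in the construction of $(\o,\bro)$.

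For (1), I would first fix, once and for all, an attractive (monotone) coupling for pairs of BDP's started from ordered initial states, so that pointwise order is preserved throughout time; this is the coupling that I would adopt to evolve $\bro$ jointly with $\o$ on each interval $[\bret_n,\bret_{n+1})$. The base case $t=0$ holds by the explicit construction of $\o^0_\x(0)\ge\o_\x(0)$. At the inductive step, the construction places $\W_n\ge\o^{n-1}_{\sx_{n-1}}(\bret_n)\ge\o_{\sx_{n-1}}(\bret_n)$ at site $\sx_{n-1}$ and leaves the other sites unchanged, so $\bro_\x(\bret_n)\ge\o_\x(\bret_n)$ at every site; the attractive coupling then carries the inequality forward on $[\bret_n,\bret_{n+1})$, closing the induction for (1).

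Property (3) follows from (1) together with the monotonicity of $\vf$ and the shared Poisson process $\M$. Since $\bro_\x(t)\ge\o_\x(t)$ and $\vf$ is non-increasing, $\vf(\bro_\x(t))\le\vf(\o_\x(t))$, hence the subset of $\M_{\x}$ that triggers jumps of $\brex$ is contained in the subset that triggers jumps of $X$, site by site. Granted the inductive assumption $\tau_n\le\bret_n$, any mark in $\M_{\sx_n}$ producing the $(n{+}1)$-st jump of $\brex$ lies in $(\bret_n,\bret_{n+1}]\subset(\tau_n,\bret_{n+1}]$ and is a valid trigger for $X$ in the latter interval; this forces $\tau_{n+1}\le\bret_{n+1}$.

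The main obstacle is (2), and this is where Lemma~\ref{lema:3.2} is indispensable. The case $n=0$ holds by construction. For the inductive step, assume $\{\bro_\x(\bret_n)\}_\x$ is iid with marginal $\nu$. Between jumps of $\brex$, the environment at each site $\x\ne\sx_n$ evolves as an independent BDP driven only by its own internal clocks, and since $\bret_{n+1}$ is a measurable function of $\bro_{\sx_n}$ on $[\bret_n,\bret_{n+1})$ and of $\M_{\sx_n}$, it is independent of those coordinates; a standard conditioning argument combined with the $\nu$-invariance of the BDP dynamics then shows that $\{\bro_\x(\bret_{n+1})\}_{\x\ne\sx_n}$ remains iid $\nu$. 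At site $\sx_n$, a time-shift together with the strong Markov property reduces the situation to the setup of Lemma~\ref{lema:3.2}: $\bro_{\sx_n}$ starts at time $\bret_n$ from $\nu$ and runs until the first mark of $\M_{\sx_n}$ is accepted according to $\vf(\bro_{\sx_n}(\cdot))$, so the lemma yields $\bro_{\sx_n}(\bret_{n+1})\preceq\nu$. The explicit replacement by $\W_{n+1}\sim\nu$, which the construction arranges to be independent of the remaining coordinates by a suitable enlargement of the probability space, then restores the full iid $\nu$ structure at time $\bret_{n+1}$ and closes the induction. The delicate point throughout is the bookkeeping of independence as $\W_{n+1}$ is introduced at each step, which is the only non-routine probabilistic input beyond Lemma~\ref{lema:3.2}.
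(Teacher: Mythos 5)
Your proof is correct and takes essentially the same approach as the paper: the paper's own proof simply notes that items 1 and 2 are ``quite clear from the construction'' and gives only the inductive case analysis for item 3. Your write-up fills in the routine details for items 1 and 2 (the monotone coupling of the BDPs, the strong-Markov/stationarity bookkeeping and the appeal to Lemma~\ref{lema:3.2} via~\eqref{dom1} to justify the existence of $\W_{n+1}$), and your item 3 is a slightly streamlined version of the paper's two-case induction using~\eqref{comp1} and the monotonicity of $\vf$.
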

	\begin{proof}
		The first two items are quite clear from the construction, so we will  argue only the third item, which is quite clear for $n=0$ and $1$
		(the latter  case was already pointed out  in the description of the construction, above); for the remaining cases, let $n\geq1$, 
		and suppose, inductively, that $\tau_n\leq\bret_n$; there are two
		possibilities for $\tau_{n+1}$: either $\tau_{n+1}\leq\bret_n$, in which case, clearly, $\tau_{n+1}\leq\bret_{n+1}$, or $\tau_{n+1}>\bret_n$; 
		in this latter case, $\tau_{n+1}$ (resp., $\bret_{n+1}$)
		will correspond to the earliest Poisson point (of $\M$) in $\cQ_n:=[c_{\sx_n(d)},c_{\sx_n(d)}+\vf(\o_{\sx_n}(r))_{r\geq\bret_n}$ 
		(resp., $\breve\cQ_n:=[c_{\sx_n(d)},c_{\sx_n(d)}+\vf(\bro_{\sx_n}(r))_{r\geq\bret_n}$). By~\eqref{comp1} and the monotonicity of $\vf$, we have that
		$\breve\cQ_n\subset\cQ_n$, and it follows that $\tau_{n+1}\leq\bret_{n+1}$.
	\end{proof}
	The next result follows immediately.
	\begin{corolario}\label{domesp}
		For $n\geq1$
		\begin{equation}\label{dom2}
			\E_{\hat\mu_0}\left(\tau_n\right)\leq	\E_{\hat\nu}\left(\bret_n\right)=n\E_{\hat\nu}\left(\bret_1\right)=n\E_{\hat\nu}\left(\tau_1\right).
		\end{equation}
	\end{corolario}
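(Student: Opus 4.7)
The corollary decomposes naturally into three parts, matching the three claims. The first inequality $\E_{\hat\mu_0}(\tau_n) \leq \E_{\hat\nu}(\bret_n)$ is immediate from item~(3) of Lemma~\ref{domult}: in the coupled construction one has the pointwise bound $\tau_n \leq \bret_n$, and in that coupling the marginal law of $\tau_n$ agrees with its $\P_{\hat\mu_0}$-law while the marginal law of $\bret_n$ agrees with the distribution of the $n$-th jump time of the modified process whose environment $\bro$ starts, by construction, from $\hat\nu$. Taking expectations yields the claim.

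For the middle equality $\E_{\hat\nu}(\bret_n) = n\,\E_{\hat\nu}(\bret_1)$, I would decompose $\bret_n = \sum_{k=1}^n (\bret_k - \bret_{k-1})$ and show each summand has the same mean as $\bret_1$ under $\P_{\hat\nu}$. The crux is item~(2) of Lemma~\ref{domult}: at every jump time $\bret_k$ the field $\{\bro_\x(\bret_k)\}_{\x\in\Zd}$ is iid with marginal $\nu$, hence equidistributed with $\bro(0) \sim \hat\nu$. Combining this with the strong Markov property for the joint process $(\brex,\bro)$ at the stopping time $\bret_k$, and with the spatial translation invariance of the product measure $\hat\nu$ and of the driving objects $\M$ and $\xi$ (so that recentering at the current position $\sx_k$ does not alter the law of the ensuing evolution), the conditional distribution of $\bret_{k+1}-\bret_k$ matches that of $\bret_1$ under $\P_{\hat\nu}$. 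Linearity of expectation then delivers the desired identity. Finally, $\E_{\hat\nu}(\bret_1) = \E_{\hat\nu}(\tau_1)$ is obtained by initializing the inductive coupling of the paragraph preceding Lemma~\ref{domult} with $\o^0 \equiv \o$, which is admissible precisely because $\hat\mu_0 = \hat\nu$; no replacement is needed before the first jump, so $\bret_1 = \tau_1$ almost surely.

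The only step requiring genuine care is the equidistribution of the inter-jump increments $\bret_k - \bret_{k-1}$; this is a routine strong-Markov-plus-spatial-homogeneity argument, rendered clean by item~(2) of Lemma~\ref{domult}, and the remainder of the proof is simple bookkeeping.
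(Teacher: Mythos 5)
Your proposal is correct and matches the paper's intended argument: the paper presents the corollary with the single phrase ``the next result follows immediately,'' and the three claims you isolate — the pointwise bound $\tau_n \leq \bret_n$ from item~(3), the identical distribution of the increments $\bret_k - \bret_{k-1}$ via item~(2) plus the strong Markov property and translation invariance, and the identification $\bret_1 \stackrel{d}{=} \tau_1$ when $\bro(0) \sim \hat\nu$ — are precisely the ingredients being invoked. You have merely made explicit the short bookkeeping the authors left to the reader.
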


	The following result, together with~\eqref{dom2}, is a key ingredient in the justification of the main result of this subsection.
	
	\begin{lema}\label{fin}
		\begin{equation}\label{fin1}
			\E_{\hat{\nu}}\left(\tu\right)<\infty
		\end{equation}
	\end{lema}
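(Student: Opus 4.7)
The plan is to decompose $\tau_1$ at the first time the environmental chain at the origin visits $0$, and control the two resulting pieces separately. Set $\mt_0:=\inf\{t\geq 0:\o_\zero(t)=0\}$. Starting from~\eqref{2.19}, I would write
\begin{equation*}
\E_{\hat\nu}[\tau_1]=\Er_\nu\!\left[\int_0^\infty e^{-I_0(t)}\,dt\right]\leq \Er_\nu[\mt_0]+\Er_\nu\!\left[\int_0^\infty e^{-\tilde I_0(s)}\,ds\right],
\end{equation*}
where $\tilde I_0(s):=I_0(\mt_0+s)-I_0(\mt_0)$, and I have used $e^{-I_0}\leq 1$ on $[0,\mt_0]$ and $e^{-I_0(\mt_0)}\leq 1$ on $[\mt_0,\infty)$. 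By the strong Markov property of $\o_\zero$ at $\mt_0$, the shifted process $\o_\zero(\mt_0+\cdot)$ starts from $0$ and is independent of $\mathcal F^{\o_\zero}_{\mt_0}$, so the second term equals $\E_\zero[\tau_1]$, yielding $\E_{\hat\nu}[\tau_1]\leq\Er_\nu[\mt_0]+\E_\zero[\tau_1]$.

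I would then check that $\Er_\nu[\mt_0]<\infty$. Conditionally on the embedded chain $\w$ of $\o_\zero$, $\mt_0$ is a sum of $T_0$ i.i.d.\ standard exponential holding times, since the total exit rate at every $n\geq 1$ equals $p_n+q_n=1$; hence $\Er_\nu[\mt_0]=\Er_\nu[T_0]$, which is finite by~\eqref{extracor}.

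Finally I would bound $\E_\zero[\tau_1]$ via the regenerative structure of $\o_\zero$ at $0$. Let $0=\vr_0<\vr_1<\vr_2<\cdots$ be the successive returns of $\o_\zero$ (started at $0$) to $0$, and decompose each cycle as $\vr_k-\vr_{k-1}=W_k+E_k$, with $W_k\sim\mathrm{Exp}(p_0)$ the waiting time at $0$ and $E_k$ the subsequent excursion, independent and distributed as $\mt_0$ under $\Pr_1$. The cycles are i.i.d.\ with $\E[\vr_1]=1/p_0+\Er_1[T_0]<\infty$ by~\eqref{extracor}. Since $\vf(0)=1$, the conditional probability that the particle jumps during $W_k$, given $W_k$, is $1-e^{-W_k}$, so each cycle contains a particle jump with probability at least $\E[1-e^{-W_1}]=1/(1+p_0)>0$; hence the number $N$ of full cycles completed before $\tau_1$ is stochastically dominated by a geometric variable with mean $1+p_0$. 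Since $N$ is a stopping time for the filtration generated by the cycles and the Poisson marks $\M_\zero$, Wald's identity gives $\E_\zero[\tau_1]\leq\E[N]\,\E[\vr_1]<\infty$.

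The main (modest) obstacle is justifying the two independence steps cleanly: the strong Markov split at $\mt_0$ needs $\M_\zero$ to be independent of $\o_\zero$ (which is built in), and the cycle bound requires the $\vr_k-\vr_{k-1}$ to be independent of the event that no particle jump occurred in earlier cycles, which follows from the regenerative structure of $\o_\zero$ combined with the marking/independence properties of $\M_\zero$.
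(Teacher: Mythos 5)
Your proof is correct and takes a genuinely different, arguably cleaner route than the paper's. The paper bounds $\int_0^\infty \P_{\hat\nu}(\tau_1>t)\,dt$ directly, splitting the tail event via the dichotomy $I_0(t)\gtrless\epsilon t$, then further on the initial height of the environment and on the number of excursions from $0$ completed by time $t$; the key technical engine is a large-deviation estimate combined with the Complete Convergence Theorem of Hsu--Robbins, which is where the second-moment condition $\Er_1(T_0^2)<\infty$ from~\eqref{extracor} enters. You instead split $\tau_1$ at the first hitting time $\mathfrak{T}_0$ of $0$ by $\o_\zero$, reducing the problem to $\Er_\nu[\mathfrak{T}_0]=\Er_\nu[T_0]$ (finite by the first alternative in~\eqref{extracor}) and to $\E_\zero(\tau_1)$, and you control the latter by a regenerative cycle decomposition at successive returns of $\o_\zero$ to $0$ together with Wald's identity: each sojourn at $0$ has a uniformly positive chance of producing a mark of $\cN_\zero$ because $\vf(0)=1$, so the number of cycles before $\tau_1$ is dominated by a geometric variable, while cycle lengths have finite first moment. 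Your argument uses only first-moment information ($\Er_\nu(T_0)<\infty$ and hence $\Er_1(T_0)<\infty$), not a second moment; under the paper's hypotheses on $\p,\q$ the two formulations in~\eqref{extracor} are equivalent, so the hypotheses are the same, but your route makes transparent that the finiteness of $\E_{\hat\nu}(\tau_1)$ is really a first-moment statement. The independence bookkeeping you flag at the end (Markov split at $\mathfrak{T}_0$; Wald with a stopping time adapted to a filtration also carrying the $\M_\zero$-marks, against which the next cycle length is independent by regeneration and the lack of memory of the Poisson process) is exactly what needs to be said and poses no obstruction. One small point worth making explicit in a write-up: in the Wald step you bound $\tau_1\leq\vr_{N+1}$ and apply Wald to the stopping time $N+1$, so the conclusion is $\E_\zero(\tau_1)\leq\E[N+1]\,\E[\vr_1]$; with the geometric domination this is $\leq(1+p_0)(1/p_0+\Er_1[T_0])$, which is finite.
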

	
	\begin{proof}
		Let us write
		\begin{align}
			\E_{\hat{\nu}}\left(\tau_1\right)&=\int_{0}^{\infty}\P_{\hat{\nu}}\left(\tau_1>t
			\right)dt\nn\\
			&=\int_{0}^{\infty}\Em_{\hat{\nu}}\left(e^{-I_{\zero}(t)}\right)dt\nn\\
			&=\int_{0}^{+\infty}\Em_{\hat{\nu}}\left(e^{-I_0(t)};I_0(t)\geq \epsilon t\right)dt
			+\int_{0}^{+\infty}\Em_{\hat{\nu}}\left(e^{-I_0(t)};I_0(t)<\epsilon t\right)dt\nn\\
			&\leq \epsilon^{-1}+\int_{0}^{+\infty}\P_{\hat{\nu}}\left(I_0(t)<\epsilon t\right)dt\nn \\
			&\leq \epsilon^{-1}+\int_{0}^{+\infty}\P_{\hat{\nu}}\left(\int_{0}^t 
			\mathds{1}\left\lbrace\o_{\mzero}(s)=0\right\rbrace ds<\epsilon t\right)dt.
			\label{eq:51}
		\end{align}
		
		For $k\in\N$, set $\k=k\times \one$. Conditioning in the initial state of the environment at the origin, 
		we have, for each  $\d>0$ and each $t\in\R_+$,
		\begin{eqnarray}
			\P_{\hat{\nu}}\left(\int_{0}^t\mathds{1}\left\lbrace \o_{\mzero}(s)=0\right\rbrace ds<\epsilon t\right)&=&
			\sum_{k=1}^{\left\lfloor \delta t\right\rfloor}\nu_k\,\P_{\k}
			\left(\int_{0}^t \mathds{1}\{\o_{\mzero}(s)=0\}ds<\epsilon t
			\right)\nn \\
			&& +\sum_{k=\left\lceil \delta t\right\rceil}^{\infty}\nu_k\,
			\P_{\k}\left(\int_{0}^t\mathds{1}\{\o_{\mzero}(s)=0\}ds<
			\epsilon t\right)\nn\\
			&\leq& \sum_{k=1}^{\left\lfloor \delta t\right\rfloor}\nu_k\,
			\P_{\k}\left(\int_{0}^t\mathds{1}\{\o_{\mzero}(s)=0\}ds<\epsilon t\right)\nn\\
			&& +\nu([\d t,\infty)). 
			\label{eq:52}
		\end{eqnarray}
		Thus, 
		\begin{equation}
			\E_{\hat{\nu}}\left(\tu\right) \leq \epsilon^{-1} + \d^{-1} \Er(\W)
			+\int_{0}^{+\infty} \sum_{k=1}^{\left\lfloor \delta t\right\rfloor}\nu_k\,\P_{\k}
			\left(\int_{0}^t\mathds{1}\{\o_{\mzero}(s)=0\}ds<\epsilon 
			t\right)dt,
			\label{3.83}
		\end{equation}
		where $\W$ is a $\nu$-distributed random variable; one readily checks that~\eqref{extracon} implies that $\W$ has a first moment.
		It remains to consider the latter summand in~\eqref{3.83}.

		For that, let us start by setting $W_0=\inf\{s>0:\o_{\mzero}(s)= 0\}$, and defining
		\begin{gather}
			Z_1=\inf\left\lbrace s>W_0:\o_{\mzero}(s)\neq 0\right\rbrace-W_0,\\
			W_1=\inf\left\lbrace s>W_0+Z_1:\o_{\mzero}(s)=0\right\rbrace-\left(W_0+Z_1\right),
			\label{eq:53-34}
		\end{gather}
		and making $Y_1=Z_1+W_1$. Note that $Z_1$ is an exponential random variable with rate $p_0$, and $W_1$ 
		is the hitting time of the origin by a $BDP(\p,\q)$ on $\N$ starting from 1; under $\P_{\mzero}$, $W_0=0$, clearly.

		For $i\geq 1$, let us suppose defined $Y_1,\ldots,Y_{i-1}$, and let us further define
		\begin{gather}
			Z_i=\inf\left\lbrace s>W_0+\sum_{j=1}^{i-1}Y_j:\o_{\mzero}(s)\neq 0\right\rbrace-\left(W_0+\sum_{j=1}^{i-1} Y_j\right),\\
			W_i=\inf\left\lbrace s>W_0+\sum_{j=1}^{i-1} Y_j+Z_i:\o_{\mzero}(s)=0\right\rbrace-\left(W_0+\sum_{j=1}^{i-1} Y_j+Z_i\right),
			\label{eq:55-56}
		\end{gather}
		and $Y_i=Z_i+W_i$. By the strong Markov property, it follows that $Z_i$ e $W_i$ are distributed as $Z_1$ e $W_1$, respectively,
		and $Z_i,W_i$, $i\geq1$ are independent, and thus $\left(Y_i\right)_{i\geq 1}$ is iid. 
		
		Now set $T_0=W_0$ and for$n\geq 1$, 
		$T_n=T_{n-1}+Y_n$. Moreover, for $t\in\R_+$, let us define $\sC_t=\sum\limits_{n=1}^{\infty}\mathds{1}\left\lbrace 
		T_n\leq t\right\rbrace$. Note that for $k\in\N$ and $a>0$, we have
		\begin{eqnarray}
			\P_{\k}\left(\int_{0}^t\mathds{1}\{\o_{\mzero}(s)=0\}ds<\epsilon t\right)&=&
			\P_{\k}\left(\int_{0}^t\mathds{1}\{\o_{\mzero}(s)=0\}ds<\epsilon t, \sC_t<\left\lfloor at \right\rfloor\right)\nn \\
			&& +~\P_{\k}\left(\int_{0}^t\mathds{1}\{\o_{\mzero}(s)=0\}ds<\epsilon t,\sC_t\geq \left\lfloor at\right\rfloor\right)\nn\\
			&\leq& \P_{\k}\left(\sC_t<\left\lfloor at \right\rfloor\right)+ \P\left(\sum_{j=1}^{\left\lfloor at\right\rfloor}Z_j<\epsilon t\right)
			\label{eq:57}
		\end{eqnarray}
		and, given $\alpha\in(0,1)$,
		\begin{align}
			\P_{\k}\left(\sC_t<\lfloor at\rfloor \right)&=\P_{\k}\left(\sC_t<\lfloor at\rfloor,T_0<\alpha t \right)+
			\P_{\k}\left(\sC_t<\lfloor at\rfloor,T_0\geq \alpha t \right)\nn\\
			&\leq \P_{\zero}\left(\sC_{(1-\alpha)t}<\lfloor at\rfloor \right)+\P_{\k}\left(T_0\geq \alpha t \right)\nn\\
			&=\P\left(\sum_{j=1}^{\left\lfloor at\right\rfloor}Y_j>(1-\alpha)t \right)+\P_{\k}\left(T_0\geq \alpha t \right).
			\label{eq:58}
		\end{align}

		By well known elementary large deviation estimates, we have that 
		\begin{equation}
			\int_0^\infty dt\,\P\!\left(\sum_{j=1}^{\left\lfloor at\right\rfloor}Z_j<\epsilon t\right)<\infty
			\label{eq:59}
		\end{equation}
		as soon as $a<p_0\epsilon$, which we assume from now on. To conclude, it then suffices to show that
		\begin{equation}
			\int_0^\infty dt\,\P\left(\sum_{j=1}^{\left\lfloor at\right\rfloor}Y_j>(1-\alpha)t \right)<\infty\,\mbox{ and }\,
			\int_0^\infty dt\,\sum_{k\geq0}\nu_k\,\P_{\k}\left(T_0\geq \alpha t \right)<\infty.
			\label{eq:61}
		\end{equation}
		The latter integral is readily seen to be bounded above by $\alpha^{-1} \Er_\nu(T_0)$, and the first condition in~\eqref{extracor} implies 
		the second assertion in~\eqref{eq:61}. The first integral in~\eqref{eq:61} can be written as
		\begin{equation}
			\int_0^\infty dt\,\P\left(\frac1{at}\sum_{j=1}^{\left\lfloor at\right\rfloor}\bar Y_j>\zeta \right),
			\label{cconv}
		\end{equation}
		where $\bar Y_j=Y_j-b$, $b=\E Y_1=\E Y_j$, $j\geq1$, $\zeta=(1-\alpha-ab)/a$. Now we have that the expression in~\eqref{cconv}  is finite
		by the Complete Convergence Theorem of Hsu and Robbins (see Theorem 1 in~\cite{HR}), as soon as $a,\alpha>0$ are close enough 
		to 0 (so that $\zeta>0$), and $W_1$ has a second moment (and thus so does $Y_1$), but this follows immediately  from the first condition in~\eqref{extracor}.
		
	\end{proof}
	
	We are now ready to state and prove the main result of this subsection.
	
	\begin{proposicao} There exists a constant $\mu\in[0,\infty)$ such that
		\label{prop:3.1}
		\begin{equation}
			\frac{\tn}{n}\to \mu \quad \P_{\zero}\text{-a.s.} \quad \textrm{as } 
			n\to\infty.
			\label{3.50}
		\end{equation}
		Furthermore,
		\begin{equation}\label{3.50a}
			\mu>0.
		\end{equation}
	\end{proposicao}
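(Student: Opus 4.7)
The plan is to invoke Kingman's Subadditive Ergodic Theorem for a suitably stationary sub-additive sequence derived from $(\tau_n)$, with moment control supplied by the domination of Lemmas~\ref{lema:3.2}--\ref{domult} and the finiteness from Lemma~\ref{fin}; the positivity of $\mu$ will then follow from the elementary bound $\vf\leq 1$. The candidate for the limit is $\mu:=\E_{\hat\nu}(\tau_1)\in(0,\infty)$ (finite by Lemma~\ref{fin}). The upper bound $\limsup_n\tau_n/n\leq\mu$ $\P_\zero$-a.s.\ already follows from Lemma~\ref{domult}: applied with $\hat\mu_0=\delta_\zero$ (legitimate since $\zero\preceq\nu$), it couples $\tau_n\leq\bret_n$ a.s.\ with $\{\bret_n-\bret_{n-1}\}_{n\geq 1}$ i.i.d.\ of mean $\mu$, and the ordinary SLLN gives $\bret_n/n\to\mu$ a.s.

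For the matching convergence of $\tau_n/n$, I would apply Kingman's theorem on the enlarged probability space that, in addition to $(\o,\xi,\sV)$, carries the i.i.d.\ $\nu$-distributed refreshers $\{\W_k\}_{k\geq 1}$ entering the construction of $\bro$. Under the equilibrium law $\P_{\hat\nu}$, the one-jump shift $\theta$---translate the walker back to the origin, advance time by $\tau_1$, and resample the just-vacated site by $\W_1$---is measure-preserving; this is the content of part~(2) of Lemma~\ref{domult}, together with the fact that $\o_\x(\tau_1)\sim\nu$ for every $\x\neq\zero$. Setting $X_{m,n}:=\tau_n-\tau_m$, we have the additivity $X_{0,n}=X_{0,m}+X_{m,n}$, the integrability bound $\E X_{0,n}\leq n\mu$ from Corollary~\ref{domesp}, and stationarity of $\{X_{nk,(n+1)k}\}_n$ under $\theta^k$ for each $k$. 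Kingman's theorem then delivers $\tau_n/n\to\mu'$ $\P_{\hat\nu}$-a.s.\ for some $\mu'\in[0,\mu]$; a coupling argument based on the pathwise monotonicity $\o^\zero(t)\leq\o^{\hat\nu}(t)$ and the $O(1)$-time mixing of a single BD at each site transfers this to $\P_\zero$-a.s.\ convergence with the same constant.

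For~\eqref{3.50a}, since $\vf(n)\leq\vf(0)=1$ by~\eqref{vf}, every waiting time $\tau_n-\tau_{n-1}$ dominates an independent standard exponential~$Z_n$ (as observed after~\eqref{2.9}); hence $\tau_n\geq\sum_{k=1}^nZ_k$ and the SLLN yields $\liminf_n\tau_n/n\geq 1$, forcing $\mu'\geq 1>0$.

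The main obstacle is verifying the stationarity hypothesis of Kingman's theorem. The naive one-jump shift on the unenlarged space is \emph{not} measure-preserving because Lemma~\ref{lema:3.2} shows that $\o_\zero(\tau_1)$ is strictly stochastically dominated by~$\nu$, so the environment at the vacated site drifts away from equilibrium after a single jump. Repairing this via the i.i.d.\ refreshers $\{\W_k\}$ is precisely where the stochastic-domination machinery of Lemmas~\ref{lema:3.2}--\ref{domult} plays its essential role; the accompanying moment bound from Corollary~\ref{domesp}, inherited from the same domination, is what permits Kingman's integrability hypothesis to be met.
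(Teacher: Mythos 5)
Your upper bound $\limsup_n\tau_n/n\leq\E_{\hat\nu}(\tau_1)$ via $\tau_n\leq\bret_n$ and the lower bound $\liminf_n\tau_n/n\geq 1>0$ are both correct and appear in the paper's proof. But the Kingman step has a genuine gap. The shift $\theta$ you define (translate, advance time by $\tau_1$, resample the vacated site by $\W_1$) is measure-preserving on the enlarged space, but its orbit tracks the \emph{refreshed}-environment jump times $\bret_n=\sum_{k<n}(\tau_1)\circ\theta^k$, not the original $\tau_n$. The quantity $X_{m,n}=\tau_n-\tau_m$ is computed from the original environment $\o$, which is \emph{not} a deterministic function of the $\theta^m$-shifted configuration: applying $\theta$ replaces the vacated site (whose state is strictly dominated by $\nu$ by Lemma~\ref{lema:3.2}) with a fresh $\nu$-draw. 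Consequently $\{X_{n,n+1}\}_n$ is not $\theta$-stationary, and Kingman's hypotheses fail for your array. Your ``repair'' via the refreshers only reproduces $\bret_n$, i.e.\ the upper bound; it does not give the limit for $\tau_n$. Likewise, the proposed transfer from $\P_{\hat\nu}$ to $\P_\zero$ ``by $O(1)$-time mixing'' is unsubstantiated --- the paper itself notes in Remark~\ref{alt-lln} that a LLN under $\P_{\hat\nu}$ is actually \emph{harder} than under $\P_\zero$, because the refreshed array loses the independence needed for ergodicity.

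The paper's actual construction is quite different and resolves the stationarity obstruction directly: it builds a \emph{superadditive} (not additive) triangular array $\sL_{m,n}$ by \emph{resetting} the environment to $\zero$ (rather than refreshing to $\nu$) at each $\tau_m$. The reset environment $\circo^m$ is coupled \emph{below} $\o$, so the walk $\cirx^m$ driven by it jumps faster, giving $\sL_{m,n}:=\cirt^m_{n-m}-\tau_m\leq\tau_n-\tau_m$ and hence $\sL_{0,n}\geq\sL_{0,m}+\sL_{m,n}$. Crucially, $\sL_{0,n}=\tau_n$ because under $\P_\zero$ the true initial condition \emph{is} $\zero$, and because each reset wipes out environmental memory the blocks $\{\sL_{nk,(n+1)k}\}_n$ are genuinely i.i.d.\ --- which is what supplies the ergodicity hypothesis. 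The integrability bound $\E_\zero(\sL_{0,n})=\E_\zero(\tau_n)\leq n\,\E_{\hat\nu}(\tau_1)<\infty$ then comes precisely from Corollary~\ref{domesp} and Lemma~\ref{fin}, as you anticipated. So the domination machinery enters as you expected, but the array to which Kingman is applied must be the reset-to-$\zero$, superadditive one, not the additive $\tau_n-\tau_m$.
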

	\begin{proof}
		\noindent We divide the argument in two parts. We first construct a superadditive triangular array of random variables 
		$\lbrace \sL_{m,n}:m,n\in\N, m\leq n\rbrace$ so that  $\sL_{0,n}$ equals $\tau_n$ under $\P_{\zero}$. 
		Secondly, we verify that $\{-\sL_{m,n}:m,n\in\N, m\leq n\}$ satisfies the conditions of Liggett's version of Kingman's 
		Subadditive Ergodic Theorem, an application of which yields the result.  
		
		
		\paragraph{A triangular array of jump times} \mbox{}
		
		\smallskip
		
		Somewhat similarly as in the construction leading to Lemma~\ref{domult} (see description preceding the statement of that result),
		we construct a sequence of environments $\circo^m$, $m\geq0$, coupled to $\o$, in a {\em dominated} way (rather than {\em dominating}, 
		as in the previous case), as follows.
		
		Let $\o(0)=\zero$, and set $\circo^0=\o$. Consider now $\tau_1,\tau_2,\ldots$, the jump times of $X$, as define above.  
		For $m\geq1$, we define $(\circo^m(t))_{t\geq\tau_m}$ as a $BDP(\p,\q)$ starting from $\circo^m(\tau_m)=\zero$, coupled to $\o$ in 
		$[\tau_m,\infty)$ so that 
		\begin{equation}\label{comp4}
			\circo_\x^m(t)\leq\o_\x(t)
		\end{equation}
		for all $t\geq\tau_m$ and all $\x\in\Zd$. 
		
		Let  $\cirx^m$ be a random walk in environment $\circo^m$ starting at time $\tau_m$ from $\x_m$, with jump times determined, besides 
		$\circo^m$, the Poisson marks of $\M$ in the upper half space from $\tau_m$, in the same way as the jump times of $X$ after $\tau_m$ 
		are determined by  $(\o(t))_{t\geq\tau_m}$ and the Poisson marks of $\M$ in the upper half space from $\tau_m$, and having subsequent 
		jump destinations given by $\sx_j$, $j\geq m$. 
		Now set $\cirt^m_0=\tau_m$ and let $\cirt^m_1,\cirt^m_2,\ldots$ be the successive jump times of $\cirx^m$.
		
		Finally, for $n\geq m$, set $\sL_{m,n}=\cirt^m_{n-m}-\tau_m$. $\sL_{m,n}$ is the time $\cirx$ takes to give $n-m$ jumps. 
		Notice that $\sL_{0,n}=\tau_n$.

		\paragraph{Properties of $\{\sL_{m,n},\,0\leq m\leq n<\infty\}$}\mbox{}
		
		\smallskip
		
		We claim that the following assertions hold.
		\begin{eqnarray}
			&\sL_{0,n}\geq \sL_{0,m}+\sL_{m,n}\,\,\P_{\zero}\text{-a.s.};&\label{sad1}\\
			&\left\{\sL_{nk,(n+1)k}, n\in \N\right\} \text{ is ergodic for each }\, k\in\N;&\label{sad2}\\
			&\text{ the distribution of } \left\{\sL_{n,n+k}: k\geq 1\right\} \text{ under $\P_{\zero}$ does not depend on }\, n\in\N;&\label{sad3}\\
			& \text{there exists }\, \gamma_0<\infty \,\text{ such that }\,\E_\zero(\sL_{0,n})\leq \gamma_0 n. &\label{sad4}
		\end{eqnarray}

		\eqref{3.50} then follows from an application of  Liggett's version of Kingman's Subadditive Ergodic Theorem to $(-\sL_{m,n})_{0\leq m\leq n<\infty}$ 
		(see~\cite{Lig}, Chapter VI, Theorem $2.6$).

		\eqref{sad3}  is quite clear,  \eqref{sad2} follows immediately upon remarking that $\sL_{nk,(n+1)k}$, $n\in \N$, are, quite clearly, independent random variables,
		and~\eqref{sad4} follows readily from~\eqref{dom2} and~\eqref{fin1}. So, it remains to argue~\eqref{sad1}, which is equivalent to 
		\begin{equation}\label{sub}
			\cirt^m_{n-m}\leq\tau_n,\,0\leq m\leq n<\infty.
		\end{equation}
		
		We make this point similarly as for~\eqref{comp3}, above.
		\eqref{sub} is immediate for $m=0$. Let us fix $m\geq1$. Then~\eqref{sub} is immediate for $n=m$, and for $n=m+1$ it follows readily from the
		fact that $\circo_{\sx_m}(t)\leq\o_{\sx_m}(t)$, $t\geq\tau_m$.
		
		For the remaining cases, let $n\geq m+1$, and suppose, inductively, that $\cirt^m_{n-m}\leq\tau_n$; there are two
		possibilities for $\cirt^m_{n+1-m}$: either $\cirt^m_{n+1-m}\leq\tau_n$, in which case, clearly, $\cirt^m_{n+1-m}\leq\tau_{n+1}$, or $\cirt^m_{n+1-m}>\tau_n$; 
		in this latter case, $\tau_{n+1}$ (resp., $\cirt^m_{n+1-m}$)
		will correspond to the earliest Poisson point (of $\M$) in $\cQ'_n:=[c_{\sx_n(d)},c_{\sx_n(d)}+\vf(\o_{\sx_n}(r))_{r\geq\tau_n}$ 
		(resp., $\mathring\cQ_n:=[c_{\sx_n(d)},c_{\sx_n(d)}+\vf(\circo^m_{\sx_n}(r))_{r\geq\tau_n}$). By~\eqref{comp4} and the monotonicity of $\vf$, we have that
		$\mathring\cQ_n\supset\cQ'_n$, and it follows that  $\cirt^m_{n+1-m}\leq\tau_{n+1}$.
		
		Finally, one readily  checks from~\eqref{sad1} that $\mu\geq\Er_0(\tau_1)$; the latter expectation can be readily checked to be strictly positive, 
		and the argument is complete.
		
	\end{proof}

\end{subsection}

\begin{subsection}{Proof of the Law of Large Numbers for $X$ under $\P_{\zero}$}
	\label{sec:3.3/2}
	
	We may now prove Theorem~\ref{teo:3.1}.
	For $t\in\R_+$, let  $\mathsf{N}_t=\inf{\left\lbrace n\geq 0: \tn<t\right\rbrace}$. 
	It follows readily from Proposition~\ref{prop:3.1}  that
	\begin{equation}
		\frac{\mathsf{N}_t}{t}\to \frac{1}{\mu} ~\P_{\zero}\text{-a.s.}
		~\textrm{as $t\to \infty$}.
		\label{eq:65}
	\end{equation}
	
	It follows from~\eqref{eq:65} and the Strong Law of Large Numbers for $(\x_n)$ that
	\begin{equation}
		\frac{X(t)}{t}=\frac{\mathsf{x}_{\mathsf{N}_t}}{t}=\frac{\mathsf{x}_{\mathsf{N}_t}}{\mathsf{N}_t}
		\times\frac{\mathsf{N}_t}{t}\to \frac{\E(\xi_1)}{\mu}~\P_{\zero}\text{-a.s}
		~ \text{as $t\to\infty$}.
		\label{eq:66}
	\end{equation}

\end{subsection}

\begin{subsection}{Proof of the Central Limit Theorem for $X$ under $\P_{\zero}$}
	\label{sec:3.2}
	
	\noindent We now prove Theorem~\ref{teo:3.2}. Let $\ga=1/\mu$, and write
	\begin{equation}\label{subs}
		\frac{X(t)}{\sqrt{\ga t}}=
		\frac{\sx_{\sN_t}-\sx_{\lfloor \ga t\rfloor}}{\sqrt{\ga t}}+ \frac{\sx_{\lfloor \ga t\rfloor}}{\sqrt{\ga t}}.
	\end{equation}

	By the Central Limit Theorem obeyed by $(\sx_n)$, we have that, under $\P$, as $t\to\infty$,
	\begin{equation}\label{clt}
		\frac{\sx_{\lfloor \ga t\rfloor}}{\sqrt{\lfloor \ga t\rfloor}}\Rightarrow N_d(\zero,\Sigma).
	\end{equation}

	We now claim that the first term on the right hand side of~\eqref{subs} (after multiplication by $\ga$) vanishes in probability as $t\to\infty$ under $\Pz$.
	Indeed, let us write $\xi_k=\left(\xi_{k,1},\ldots,\xi_{k,d}\right)$, $k\in\N$. Given $\epsilon>0$, let us set $\d=\eps^3$; we have that

	\begin{equation}\label{decomp}
		\P_{\zero}\left(\left\lVert \frac{\sx_{\sN_t}-\sx_{\lfloor \ga t\rfloor}}{\sqrt{t}}\right\rVert>\epsilon\right)\leq
		\P_{\zero}\left(\left\lVert \sx_{\sN_t}-\sx_{\lfloor \ga t\rfloor}\right\rVert>\eps\sqrt{t},\,\left|{\sN_t}-\gamma t\right|<{\d t}\right)+
		\P_{\zero}\left(\left|{\sN_t}-\gamma t\right|\geq{\d t}\right).
	\end{equation}
	By~\eqref{eq:65}, it then suffices to consider the first term on the right hand side of~\eqref{decomp}, which may be readily seen to be bounded above by
	\begin{equation}
		\sum_{i=1}^d\left\{\P_{\zero}\left(\max_{0\leq\ell\leq\d t}\left| \sum_{k=\ga t-\ell}^{\ga t}\xi_{k,i}\right|>\epsilon\sqrt{t}\right)
		+ 
		\P_{\zero}\left(\max_{0\leq\ell\leq\d t}\left| \sum_{k=\ga t}^{\ga t+\ell}\xi_{k,i}\right|>\epsilon\sqrt{t}\right)\right\}
		\leq 3\,\mathrm{Tr}\!\left(\Sigma\right)\eps,
		\label{eq:3.97}
	\end{equation}
	\noindent where we have used Kolmogorov's Maximal Inequality in the latter passage; the claim follows since $\eps$ is arbitrary. 
	And the CLT follows readily from the claim and~\eqref{clt}.
	
	\medskip
	
	\begin{observacao}\label{ext_clt}
		A meaningful extension of our arguments for the above CLT to the non mean zero case would require understanding the fluctutations of $(\sN_t)$, and their dependence to those of a centralized $X$, issues that we did not pursue for the present article, even though they are most probably treatable by a regeneration argument (possibly dispensing with the domination requirements of our argument for the mean zero case, in particular that $\vf$ be decreasing).
		
		Another extension is to prove a functional CLT; for the mean zero case treated above, that, we believe, requires no new ideas, and thus we refrained to present a standard argument to that effect (having already gone through standard steps in our justifications for the LLN and CLT for $X$).
	\end{observacao}

	\begin{observacao}\label{relax}
		It is quite clear from our arguments that all that we needed to have from our conditions on $\p,\q$ is the validity of both conditions in~\eqref{extracor}, and 
		thus we may  possibly relax to some extent~\eqref{extracon}, and certainly other conditions imposed on $\p,\q$ (in the paragraph of~\eqref{erg}), with the same approach, but we have opted for simplicity and cleanness, within a measure of generality.
	\end{observacao}

	\begin{observacao}\label{rever}
		For the proof of Lemma~\ref{3.17}, a mainstay of our approach, we relied on 
		the reversibility of the birth-and-death process,  the positivity of $d_n$, and the increasing monotonicity of $\psi$; see the upshot of the paragraph of~\eqref{3.27a}. It is natural to think of extending the argument for other reversible ergodic Markov processes on $\N$; one issue for longer range cases is the positivity of $d_n$; there should be examples of long range reversible ergodic Markov processes on $\N$ where positivity of $d_n$ may be ascertained by a coupling argument, and we believe we have worked out such an example, but it looked too specific to warrant a more general formulation of our results (and the extra work involved in such an attempt), so again we felt content in presenting our approach in the present setting.
	\end{observacao}
	
	\begin{observacao}\label{alt-lln}
		Going back to the construction leading to Lemma~\ref{domult}, for $0\leq m\leq n$, let $\brel_{m,n}$ denote the time $\o^m$ takes to give $n-m$ jumps.
		Then it follows from the properties of $\o,\o^m$, $m\geq0$, as discussed in the paragraphs preceding the statement of Lemma~\ref{domult}, that
		$\{\brel_{m,n},\,0\leq m\leq n<\infty\}$ is a {\em subadditive} triangular array, and a Law of Large Numbers for $\tau_n$ under $\P_{\hat\nu}$ would follow, once we establish ergodicity of  $\{\brel_{nk,(n+1)k}, n\in \N\}$, other conditions for the application of the Subadditive Ergodic Theorem being readily seen to hold. 
		This would require a more susbstantial argument than for the corresponding result for $\{\sL_{nk,(n+1)k}, n\in \N\}$, made briefly above (in the second paragraph
		below~\eqref{sad4}), since independence is lost. Perhaps a promising strategy would be one similar to that which we undertake in next section, to the same effect; see Remark~\ref{rem:erg}. For this, if for nothing else, we refrained from pursuing  this specific point in this paper.
	\end{observacao}
	
	\begin{observacao}\label{vfzero}
		The restriction of positivity of $\vf$, made at the beginning, is not really crucial in our approach. It perhaps makes parts of the arguments clearer, but our approach works if we allow for $\vf(n)=0$ for $n\geq n_0$ for any given $n_0\geq1$ --- in this case, we note, the auxiliary process $\Y$ introduced in the proof of Lemma~\ref{lema:3.2} is a birth-and-death process on $\{0,\ldots,n_0-1\}$.
	\end{observacao}

\end{subsection}
%



\section{Other initial conditions}
\label{ext}

\setcounter{equation}{0}

In this section we extend Theorem~\ref{teo:3.2} to other (product) initial conditions. In this and in the next section, we will assume for simplicity that  the BD process environments are homogeneous, i.e., $p_n\equiv p$, with $p\in(0,1/2)$. In this context, we use the notation $BDP(p,q)$ for the process, where $q=1-p$.  We hope that the arguments developed for the inhomogeneous case, as well as subsequent ones, are sufficiently convincing that this may be relaxed --- although we do not pretend to be able to propose optimal or near optimal conditions for the validity of any of the subsequent results.

As we will see below, our argument for this extension does not go through a LLN for the position of the particle, as it did in the previous section, 
we do not discuss an extension for the LLN, rather focusing on the CLT.\footnote{But the same line of argumentation below may be readily 
	seen to yield a LLN, under the same conditions.}

We will as before assume that the initial condition for the environment is product, given by $\hat\mu_0=\bigotimes\limits_{\x\in\Zd}\mu_{\x,0}$, 
and we will further assume that $\mu_{\x,0}\preceq\bar\mu$, with $\mu$ a probability measure on $\N$ with an exponentially decaying tail, i.e.,
there exists a constant $\beta>0$ such that
\begin{equation}\label{expdec}
	\bar\mu([n,\infty))\leq\text{const } e^{-\beta n}
\end{equation}
for all $n\geq0$. Notice that this includes $\hat\nu$, in the present homogeneous BDP case. Again, it should hopefully be quite clear from our arguments that these conditions can be relaxed both in terms of the homogeneity of $\bar\mu$, as the decay of its tail, but we do not seek to do that presently, or to suggest optimal or near optimal conditions.

Our strategy is to first couple the environment starting from $\hat\mu_0$ to the one starting from $\zero$, so that for each $\x\in\Zd$, each respective BD process evolves independently one from the other until they first meet, after which time they coalesce forever.

One natural second step is to couple two versions of the random walks, one starting from each of the two coupled environments in question, so that they jump together when they are at the same point at the same time, and see the same environment. One quite natural way to try and implement such a strategy is to have both walks have the same embedded chains, and show that they will (with high probability) eventually meet at a time at and after which they only see the same environments. Even though this looks like it should be true, we did not find a way to control the distribution of the environments seen by both walks in their evolution (in what might be seen as a {\em game of pursuit}) in an effective way. 

So we turned to our actual subsequent strategy, which depends on the dimension (and requires different further conditions on $\pi$, the distribution of $\xi$, in $d\geq2$). In $d\leq2$, we modify the strategy proposed in the previous paragraph by letting the two walks evolve independently when separated, and relying on recurrence to ensure that they will 
meet in the afore mentioned conditions; there is a technical issue arising in the latter point for general $\pi$ (within the conditions of 
Theorem~\ref{teo:3.2}), which we resolve by invoking a result in the literature, which is stated for $d=1$ only, so for $d=2$ we need to restrict $\pi$ to be symmetric. See Remark~\ref{symm} below.

In $d\geq3$, we of course do not have recurrence, but, rather, transience, and so we rely on this, instead, to show that our random walk  will eventually find itself  in a cut point of its trajectory such that the environment along its subsequent trajectory  is coalesced with a suitably coupled environment starting from $\zero$; this allows for a comparison to the situation of Theorem~\ref{teo:3.2}.
The argument requires the a.s.~existence of infinitely many cut points of $(\xn)$, and, to ascertain that, we rely on the literature, which states boundedness of the support of $\pi$ as a sufficient condition (but no symmetry).

\begin{teorema} [Central Limit Theorem for $X$] \label{gclt}\mbox{}
	
	Under the same conditions of Theorem~\ref{teo:3.2}, and assuming the conditions on $\hat\mu_0$ stipulated in the paragraph of~\eqref{expdec} above hold, then we have that for $\Pm_{\hat{\mu}_{{0}}}$-a.e. $\o$
	\begin{equation}
		\frac{X(t)}{\sqrt{ t/\mu}}\Rightarrow N_d(\zero,\Sigma) \,\mbox{ under }\, P^{^{\o}}, 
		\label{eq:75ext}
	\end{equation}
	\label{teo:3.2ext}
	%
	provided the following extra conditions on $\pi$ hold, depending on $d$: in $d=1$, no extra condition; in $d=2$, $\pi$ is symmetric; 
	in $d\geq3$, $\pi$ has bounded support.
\end{teorema}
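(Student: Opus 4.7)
The plan is to compare $X$ under $\P_{\hat\mu_0}$ with a coupled walk $X^0$ under $\Pz$, showing that their positions differ by a quantity negligible at scale $\sqrt{t}$, so that the CLT transfers from Theorem~\ref{teo:3.2}. The first step is a site-by-site coupling of the two environments: for each $\x\in\Zd$, let $\o_\x$ (starting from $\mu_{\x,0}\preceq\bar\mu$) and $\omega^0_\x$ (starting from $0$) evolve as independent $BDP(p,q)$'s until their first meeting time $\sigma_\x$, after which they coalesce and are driven by common BD noise. The exponential tail~\eqref{expdec}, together with the geometric tails of the return time to $0$ of the BDP (which hold under our assumptions on $p,q$), implies that $\sigma_\x$ has moments of all orders, uniformly in $\x$.

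For $d\leq 2$, I build the two coupled walks $X$ (in $\o$) and $X^0$ (in $\omega^0$) so that they evolve independently while at different sites, and are forced to merge at the first jump time at which they occupy a common site $\x$ with $\sigma_\x$ already past; thereafter they share Poisson clocks and step sequence, and see identical environments, so $X(t)-X^0(t)$ is eventually constant. Finiteness of the merging time is where $d\leq 2$ comes in, through recurrence of the difference of two independent copies of $(\sx_n)$; the subtlety is that recurrence alone is not quite enough to guarantee the merging at a site whose environment is already coalesced (for general, possibly asymmetric step law $\pi$), and one must appeal to a result in the literature which is available for $d=1$ and which extends to $d=2$ under the symmetry assumption on $\pi$. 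Once the merging time is a.s.~finite, Theorem~\ref{teo:3.2} applied to $X^0$ yields~\eqref{eq:75ext} for $X$.

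For $d\geq 3$, recurrence is unavailable, and I instead exploit cut points of the embedded chain, i.e., indices $k$ with $\{\sx_j:j<k\}\cap\{\sx_j:j\geq k\}=\emptyset$. The bounded-support assumption on $\pi$ guarantees, by classical results of James--Peres / Erd\H{o}s--Taylor type, that a.s.~infinitely many cut points exist. I then pick the first cut point $k$ such that $\sigma_{\sx_j}\leq\tau_j$ for every $j\geq k$; the existence of such $k$ a.s.~follows from a Borel--Cantelli estimate combining the moment tails of $\sigma_\x$, the linear growth $\tau_j/j\to\mu$ from Proposition~\ref{prop:3.1}, and the positive asymptotic density of cut points. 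From time $\tau_k$ onwards the walk $X$ visits only sites whose environment has already coalesced with the one starting from $\zero$, so $X$ restarted at $(\sx_k,\tau_k)$ can be realized jointly with a walk $X^0$ under $\Pz$ (suitably shifted) in such a way that the two coincide; an application of Theorem~\ref{teo:3.2} to $X^0$, together with the fact that $(\sx_k,\tau_k)$ is a.s.~finite (hence negligible at scale $\sqrt{t}$), closes the case.

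The main obstacle I expect is the merging step in $d\leq 2$: converting recurrence of the difference chain into genuine coalescence at a site whose environment is already identified requires more than bare hitting estimates, and appears to need either the $d=1$ structure or the symmetry of $\pi$ (as the authors indicate). In $d\geq 3$ the main technical burden is quantifying the joint event that a cut point occurs and that the entire future trajectory already lies in coalesced environment, which requires the tail of $\sigma_\x$ from~\eqref{expdec} to be combined with sharp asymptotics on the density of cut points of $(\sx_n)$.
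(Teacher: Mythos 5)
Your high-level strategy (coalescing coupling of environments, recurrence of the difference walk for $d\leq2$, cut points for $d\geq3$) is the same as the paper's, but both of your reductions have genuine gaps where the paper does real work.

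For $d\leq 2$: you force the two walks to merge ``at the first jump time at which they occupy a common site $\x$ with $\sigma_\x$ already past'' and then assert they ``see identical environments.'' This is where the argument breaks: coalescence of $\o_\x$ and $\omega^0_\x$ at that one site $\x$ says nothing about the other sites the merged walk will subsequently visit. If the walk later jumps to some $\y$ where $\sigma_\y>t$, the two environments disagree there and the Poisson clocks fire at different times, so the two walks separate again. What is needed, and what the paper proves as Lemma~\ref{coupenv}, is a quantitative Borel--Cantelli estimate showing that eventually the entire ball $\B_{n+1}$ has coalesced before the walk exits $\B_n$, so that from some finite random time $N$ onward \emph{every} site the walk can see is already coalesced. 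Only then is a return of the difference chain to $0$ after time $N$ a genuine merging event. Concretely the paper keeps the whole sequence $\tsi_n$ of return times of $\sfD$ to $0$, defines for each $n$ a version $X_n$ that follows $X'$ up to $\tsi_n$ and thereafter follows $\cirx$'s clocks, and bounds the CLT discrepancy by $P(\tsi_n\geq t)+P(N\geq\tsi_n)$, sending $t\to\infty$ and then $n\to\infty$. Also, your phrasing ``a result ... available for $d=1$ and which extends to $d=2$ under symmetry'' misreads the role of the literature: for symmetric $\pi$ (which is what is assumed in $d=2$) the difference chain is a genuine mean-zero random walk with finite variance and classical recurrence applies directly; the Durrett--Kesten--Lawler controlled-random-walk theorem is invoked precisely in the remaining $d=1$, asymmetric case where the difference process is not Markovian, and the paper explicitly notes (Remark~\ref{symm}) that no extension of that theorem to $d=2$ was found.

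For $d\geq 3$: the cut-point idea is right, and the combination of the exponential coalescence tail with the growth of the walk's range is exactly how Lemma~\ref{coupenv} is proved, but your conclusion ``$X$ restarted at $(\sx_k,\tau_k)$ can be realized jointly with a walk $X^0$ under $\Pz$ (suitably shifted) in such a way that the two coincide'' skips the central difficulty. The two walks $X$ and $\cirx$ share the same embedded chain and the same exponential clocks $\sV$, but because they see different (dominated) environments they reach the cut point at \emph{different} times $\vs_\ell>\mvs_\ell$. ``Suitably shifted'' is not innocuous: the environment at time $\vs_\ell$ is not the environment at time $\mvs_\ell$, and Theorem~\ref{teo:3.2} applies only to the walk started at time $0$ in the configuration $\zero$. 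The paper resolves this by introducing a \emph{third} coalesced environment $\tio$ started from the invariant product $\hat\nu$, and three modified walks $\cirx_\ell$, $X_\ell$, $X'_\ell$ that, past the cut time, evolve in $\tio$; invariance of $\hat\nu$ makes $P^{\circo,\o,\tio}(X_\ell\in\cdot)$ and $P^{\circo,\o,\tio}(X'_\ell\in\cdot)$ equal in distribution \emph{as random measures}, which is what kills the random time shift $\d_\ell=\vs_\ell-\mvs_\ell$. One then also needs a small tightness estimate (the paper's~\eqref{eqs5}, using the Poisson domination of the jump counts) to absorb the remaining shift $\cirx((t-\d_\ell)^+)$ vs.\ $\cirx(t)$. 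Without some device of this kind the random time shift is not handled, and the reduction to Theorem~\ref{teo:3.2} is incomplete.
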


We present the proof of Theorem~\ref{teo:3.2ext} in two arguments, spelling out the above broad descriptions,  in two subsequent subsections, one for $d\leq2$, and another one for $d\geq3$. We first state and prove a lemma which enters 
both arguments, concerning successive coalescence of coupled versions of the environments, one started from $\zero$, and the other from $\hat\mu_0$, over certain times related to displacements of $(\xn)$.

Consider two coalescing versions of the environment, $\circo$ and $\o$, the former one starting from $\zero$, and the latter starting from $\hat\mu_0$ as above, such that $\circo_\x(t)\leq\o_\x(t)$ for all $\x$ and $t$, and for $\x\in\Zd$, let $\sT_{\x}$ denote the coalescence time of $\circo_\x$ and $\o_\x$, i.e.,
\begin{equation}
	\sT_{\x}=\inf\left\lbrace s>0:\circo_{\x}(s)=\o_{\x}(s)\right\rbrace.
\end{equation}

Now let $\cirx$ and $X$ be versions of the random walks on $\Zd$ in the respective environments, both starting from $\zero(\in\Zd)$.
Let us suppose, for simplicity, that they have the same embedded chain $(\xn)$.
For $n\in\N$, let $\B_n$ denote $\{-2^n,-2^n+1,\ldots,2^n-1,2^n\}^d$, let $\circh_n$ (resp., $\cH_n$) denote the hitting time of $\Zd\setminus\B_n$ by $\brex$ (resp., $X$), and consider the event $\circa_n$ (resp., $\mA_n$) that $\sT_{\x}\leq\circh_n$ (resp., $\sT_{\x}\leq\cH_n$) for all $\x\in\B_{n+1}$.
Let also $\h_n$ denote the hitting time of $\Zd\setminus\B_n$ by $(\xn)$.

\begin{lema} \label{coupenv}

	\begin{equation}\label{coupenv1}
		\P_{\zero}(\circa_n^c\text{ infinitely often})=\P_{\hat{\mu}_{{0}}}(\mA_n^c\text{ infinitely often})=0
	\end{equation}
	
\end{lema}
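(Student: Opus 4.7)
\textbf{Plan for Lemma~\ref{coupenv}.} I would prove both assertions via Borel--Cantelli; it is enough to establish $\sum_n \P_\zero(\circa_n^c)<\infty$ and $\sum_n \P_{\hat\mu_0}(\mA_n^c)<\infty$. Since the two arguments are structurally identical, I focus on the first. Setting $M_n:=2^{2n}/n^2$, the elementary inclusion
\begin{equation*}
	\circa_n^c\subset\{\circh_n\le M_n\}\cup\bigcup_{\x\in\B_{n+1}}\{\sT_\x>M_n\}
\end{equation*}
combined with a union bound gives
\begin{equation*}
	\P_\zero(\circa_n^c)\le \P_\zero(\circh_n\le M_n)+|\B_{n+1}|\max_{\x\in\B_{n+1}}\P(\sT_\x>M_n),
\end{equation*}
reducing the task to two tail estimates.

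\textbf{Uniform exponential tail of $\sT_\x$.} The monotonicity $\circo_\x(t)\le\o_\x(t)$ forces $\sT_\x$ to be at most the hitting time of $0$ by $\o_\x$ (at any such instant both processes must equal $0$). In its marginal law the latter is the first-passage time to $0$ of a $BDP(p,q)$ started from $\o_\x(0)\sim\mu_{\x,0}\preceq\bar\mu$. Since $p<1/2$, the chain drifts towards $0$, and the first-passage time from $k$ to $0$ is distributed as a sum of $k$ iid copies of the first-passage from $1$ to $0$, which has finite exponential moments; a standard large-deviation estimate thus gives $\Pr_k(T_0>t)\le C e^{-c_2(t-c_1 k)}$ for $t\ge c_1 k$. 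Combining with~\eqref{expdec} and picking $c_2$ small enough that $c_1 c_2<\beta$, this yields $\P(\sT_\x>t)\le C'e^{-\alpha t}$, uniformly in $\x$, for some $\alpha>0$.

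\textbf{Lower bound on $\circh_n$ and $\cH_n$.} Using $\vf\le 1$ together with the alternative construction of Subsection~\ref{sec:2.2} applied to $\brex$, the successive inter-jump times of $\brex$ admit the lower bound $\bret_k-\bret_{k-1}\ge\sV_k$, where $(\sV_k)$ is an iid sequence of standard exponentials independent of $(\xn)$ and of the environment $\circo$. Hence $\circh_n\ge\sum_{k=1}^{\h_n}\sV_k$. Since $(\xn)$ is a mean-zero random walk with finite second moment, Doob's $L^2$ maximal inequality applied to the submartingale $\|\xk\|^2$ yields $\P_\zero(\h_n\le 2M_n)\le C/n^2$, while a Cram\'er-type estimate gives $\P\bigl(\sum_{k=1}^{2M_n}\sV_k<M_n\bigr)\le e^{-cM_n}$. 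Combining, $\P_\zero(\circh_n\le M_n)\le C'/n^2$, which is summable; the analogous bound for $\cH_n$ under $\P_{\hat\mu_0}$ follows by the same reasoning applied to $X$.

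\textbf{Conclusion and main obstacle.} Since $|\B_{n+1}|\le C\cdot 2^{(n+1)d}$ while $e^{-\alpha M_n}=e^{-\alpha 2^{2n}/n^2}$ decays super-exponentially in $n$, the series $\sum_n|\B_{n+1}|e^{-\alpha M_n}$ is finite, and combined with the $O(n^{-2})$ bound on $\P_\zero(\circh_n\le M_n)$ this gives $\sum_n\P_\zero(\circa_n^c)<\infty$, so Borel--Cantelli settles the first claim; the second follows verbatim. I expect the uniform-in-$\x$ exponential tail of $\sT_\x$ to be the main technical obstacle: one must arrange the monotone coupling so that $\sT_\x$ is dominated by a genuine \emph{marginal} first-passage time of $\o_\x$, and then match the subcritical BDP large-deviation rate against the decay rate $\beta$ of $\bar\mu$ in~\eqref{expdec}.
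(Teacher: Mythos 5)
Your proposal is correct and follows essentially the same route as the paper: Borel--Cantelli after a union bound splitting $\circa_n^c$ (resp.\ $\mA_n^c$) into the event that the exit time $\circh_n$ (resp.\ $\cH_n$) is small and the event that some $\sT_\x$, $\x\in\B_{n+1}$, is large; the first is controlled by bounding $\circh_n$ from below by a sum of iid exponentials indexed up to $\h_n$ (exactly the $\bh_n$ of the paper, available because $\vf\le1$), combined with a Doob/Kolmogorov maximal inequality for $\h_n$; the second by an exponential tail for $\sT_\x$ obtained by dominating the coalescence time with a first-passage time of a subcritical birth-and-death chain and splitting according to the starting level of $\o_\x(0)\preceq\bar\mu$ as in~\eqref{coupenv4}. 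The only cosmetic difference is your choice of threshold $M_n=2^{2n}/n^2$ in place of the paper's cleaner $2^{n-1}$, which still yields a summable bound.
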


\begin{proof}
	Under our conditions, the argument is quite elementary, and for this reason we will be rather concise.  
	Let us first point out that both $\circh_n$ and $\cH_n$ are readily seen to be bounded from below stochastically by 
	$\bh_n:=\sum_{i=1}^{\h_n}\cE_i$, where $\cE_1,\cE_2,\ldots$ are iid standard exponential random variables, 
	which independent of $\h_n$ and of $\circo$ and $\o$.
	
	It follows readily from Kolmogorov's Maximal Inequality that for all $n\in\N$
	\begin{equation}\label{kmi}
		\P(\h_n\leq 2^n)=\P\Big(\max_{1\leq i\leq 2^n}\|\sx_i\|> 2^n\Big)\leq\text{const } 2^{-n},
	\end{equation}
	and by the above mentiond domination and elementary well known large deviation estimates, we find that
	\begin{equation}\label{coupenv2}
		\Pz(\circh_n\leq 2^{n-1})\vee\P_{\hat{\mu}_{{0}}}(\cH_n\leq 2^{n-1})\leq\P(\bh_n\leq 2^{n-1})\leq\text{const } 2^{-n}.
	\end{equation}
	We henceforth treat only the first probability in~\eqref{coupenv1}; the argument for the second one is identical.
	
	The probability of the event that $\circh_n\leq 2^{n-1}$ and $\sT_{\x}>\cH_n$ for some $x\in\B_{n+1}$ is bounded above by
	\begin{equation}\label{coupenv3}
		\text{const }2^{dn}\,\Pr(\sT_{\zero}> 2^{n-1}).
	\end{equation}
	
	It may now be readily checked that $\sT_{\zero}$ is stochastically dominated by the hitting time of the origin by a simple symmetric random walk on $\Z$  
	in continuous time with homogeneous jump rates equal to 1, with probability $p$ to jump to the left, initially distributed as $\bar\mu$. Thus, given $\d>0$
	\begin{equation}\label{coupenv4}
		\Pr(\sT_{\zero}> 2^{n-1})\leq\bar\mu([\d2^n,\infty))+\Pr\Big(\sum_{i=1}^{\d2^n}H_i>2^{n-1}\Big),
	\end{equation}
	where $H_1,H_2$ are iid random variables distributed as the hitting time of the origin by a simple symmetric random walk on $\Z$  
	in continuous time with homogeneous jump rates equal to 1, with probability $p$ to jump to the left, starting from 1. 
	$H_1$ is well known to have a positive exponential moment; it follows from elementary large deviation estimates that we may choose
	$\d>0$ such that the latter term on the right hand side of~\eqref{coupenv4} is bounded above by const $e^{-b2^n}$ for some constant 
	$b>0$ and all $n$. Using this bound, and substituting~\eqref{expdec}  in~\eqref{coupenv4}, we find that
	\begin{equation}\label{coupenv5}
		\Pr(\sT_{\zero}> 2^{n-1})\leq\text{const }e^{-b'2^n}
	\end{equation}
	for some $b'>0$ and all $n$, and~\eqref{coupenv1} upon a suitable use of the Borel-Cantelli Lemma.
	
\end{proof}

\begin{observacao}\label{rem:erg}
	As vaguely mentioned in Remark~\ref{alt-lln} at the end of the previous section,
	a seemingly promising strategy for establishing the ergodicity of $\{\brel_{nk,(n+1)k}, n\in \N\}$
	would be to approximate an event of $\F^+_{m'}$, the $\s$-field generated by $\{\brel_{nk,(n+1)k}, n\geq m'\}$, by one generated by a version of
	an environment starting from $\zero$ at time $\brel_{0,mk}$, coupled to the original environment in a coalescing way as above, with suitable
	couplings of the jump times and destinations, with fixed $m\in\N_*$ and $m'\gg m$. Ergodicity would follow by the independence of the latter
	$\s$-field and $\F^-_{m}$, the $\s$-field generated by $\{\brel_{(n-1)k,nk}, 1\leq n\leq m\}$. 
	We have not attempeted to work this idea out in detail; if we did, it looks as though we might face the same issues arising in the extension of the CLT,
	as treated in the present section, thus possibly not yielding a better result than Theorem~\ref{teo:3.2ext}.
\end{observacao}

\begin{subsection}{Proof of Theorem~\ref{teo:3.2ext} for $d\leq2$}
	\label{12d}
	
	We start by fixing the coalescing environments $\circo$ and $\o$,  as above, and considering two independent random walks, denoted 
	$\cirx$ and $X'$ in the respective environments $\circo$ and $\o$. The jump times of $\cirx$ and $X'$ are obtained from $\cirm$ and 
	$\M'$, as in the original construction of our model, where $\cirm$ and $\M'$ are independent versions of $\M$. 
	
	For the jump destinations of $\cirx$ and $X'$,  we will change things a little, and consider independent families $\cirxi=\{\cirxi_\sz,\,\sz\in\cirm\}$ and $\xi'=\{\xi'_\sz,\,\sz\in\M'\}$
	of independent versions of $\xi_1$. The jump destination of $\cirx$ at the time corresponding to an a.s.~unique point $\sz$ of $\cirm$ is then 
	given by $\cirxi_\sz$, and correspondingly for $X'$.

	Let $\sfD=\big(\sfD(s):=\cirx(s)-X'(s), s\geq0\big)$, which is clearly a continuous time jump process, and consider the embedded chain of $\sfD$, denoted
	$\sd=\left(\sd_n\right)_{n\in \N}$. We claim that under the conditions of Theorem~\ref{teo:3.2ext} for $d\geq2$, $\sd$ is recurrent, that is, 
	it a.s.~returns to the origin infinitely often. 
	
	Before justifying the claim, let us indicate how to reach the conclusion of the proof of Theorem~\ref{teo:3.2ext} for $d\leq2$ from this. We consider the 
	sequence of  return  times of $\sfD$ to the origin, i.e., $\tsi_0=0$, and for $n\geq1$, 
	\begin{equation}\label{meet}
		\tsi_n=\inf\big\{s>\tsi_{n-1}:\, \sfD(s)=0\text{ and } \sfD(s-)\ne0\big\}.
	\end{equation}
	It may be readily checked, in particular using the recurrence claim, that this is an infinite  sequence of a.s.~finite stopping times given $\circo,\o$, such that
	$\tsi_n\to\infty$ as $n\to\infty$.
	
	Then, for each $n\in\N$, we define a version of $X'$, denoted $X_n$, coupled to $\cirx$ and $X'$ as follows: $X_n(s)=X'(s)$ for $s\leq\tsi_n$, and for 
	for $s>\tsi_n$, the jump times and destinations of $X_n$ are defined from $\o$ as before, except that we replace the Poisson marks of $\M'$ in the half space above $\tsi_n$ by the corresponding marks of $\cirm$, and we use the corresponding jump destinations of $\cirxi$. It may be readily checked that $X_n$ is a version of $X'$, and that starting at $\tsi_n$, and as long as $X_n$ and $\cirx$ see the same respective environments, they remain together.

	It then follows from Lemma~\ref{coupenv} that there exists a finite random time $N$ such that $\cirx(t)$ and $X'(t)$ each sees only coupled environments
	for $t>N$, and thus so do $\cirx(t)$ and $X_n(t)$ for $t>\tsi_n>N$. It then follows from the considerations above that given $\circo,\o$, $n\in\N$ and  $x\in\R$
	\begin{eqnarray}\nn
		&\Big|P\Big(\frac{X'(t)}{\sqrt t}<x\Big)-P\Big(\frac{\cirx(t)}{\sqrt t}<x\Big)\Big|
		=\Big|P\Big(\frac{X_n(t)}{\sqrt t}<x\Big)-P\Big(\frac{\cirx(t)}{\sqrt t}<x\Big)\Big|&\\
		&\leq P\big(\{t>\tsi_n>N\}^c\big)\leq P(\tsi_n\geq t)+P(N\geq \tsi_n),&
		\label{clt1}
	\end{eqnarray}
	and it follows that the limsup as $t\to\infty$ of the left hand side of~\eqref{clt1} is bounded above by the latter probability in the same expression. 
	The result (for $X'$) follows since (it does for $\cirx$, by Theorem~\ref{teo:3.2}, and) $n$ is aribitrary.
	
	In order to check the recurrence claim, notice that if $\pi$, the distribution of $\xi_1$, is symmetric, then $\sd$ is readily seen to be a discrete time random walk on $\Zd$ with jump distribution given by $\pi$, and the claim folllows from well known facts about mean zero random walks with finite second moments for $d\leq2$. 
	This completes the argument for Theorem~\ref{teo:3.2ext} for $d=2$.
	
	For $d=1$ and asymmetric $\pi$, $\sd$ is no longer Markovian, but we may resort to Theorem 1 of~\cite{RHG91} to justify the claim as follows.
	Let us fix a realization of $\circo$, $\o$, $\cirm$ and $\M'$ (such that no two marks in $\cirm\cup\M'$ have the same time coordinate, which is of course an event of full probability). 
	Let us now dress $\sd$ up as a {\em controlled random walk (crw)}  (conditioned on $\circo$, $\o$, $\cirm$ and $\M'$), in the language of~\cite{RHG91}; see paragraph before the statement of Theorem 1 therein. 
	
	There are two kinds of jump distributions for $\sd$ ($p=2$, in the notation of~\cite{RHG91}):
	$F_1$ denotes the distribution of $\xi_1$, and $F_2$ denotes the distribution of $-\xi_1$. 
	In order to conform to the set up of~\cite{RHG91}, we will also introduce two independent families of (jump) iid random variables 
	(which will in the end not be used), namely, $\brexi=\{\brexi_\sz,\,\sz\in\cirm\}$ and $\xi''=\{\xi''_\sz,\,\sz\in\M'\}$, independent of, but having the same marginal distributions as, $\cirxi$ (and $\xi'$).
	
	Let us see how the choice between each of the two distributions is made at each step of $\sd$. This is done using the indicator functions 
	$\psi_n$, introduced and termed in~\cite{RHG91} the {\em choice of game at time} $n\geq1$, inductively, as follows. 
	
	Given $\circo$, $\o$, $\cirm$ and $\M'$, let $\zeta_1$ denote the earliest point of $\cirn_\zero \cup\cN'_\zero$, where $\cirn_\x$, $\cN'_\x$, $\x\in\Zd$, are defined from $(\circo,\cirm)$ and $(\o,\M')$, respectively, as $\Nx$ was defined from $(\o,\M)$ at the beginning of Section~\ref{mod},
	and let $\eta_1$ denote the time coordinate of $\zeta_1$, and set $\psi_1= 1+\oone\{\zeta_1\in\cN'_\zero\}$, and 
	\begin{equation}\label{psi1}
		X_1^i:=
		\begin{cases}
			\,\,\,\,\cirxi_{\zeta_1},&\text{ if } \psi_1=1\text{ and } i=1,\\
			-\brexi_{\zeta_1},&\text{ if } \psi_1=1\text{ and } i=2,\\
			\,\,\,\,\xi''_{\zeta_1},&\text{ if } \psi_1=2\text{ and } i=1,\\
			-\xi'_{\zeta_1},&\text{ if } \psi_1=2\text{ and } i=2.
		\end{cases}	
	\end{equation}
	Notice that $X_1^1$ and $X_1^2$ are independent and distributed as $F_1$ and $F_2$, respectively, and that a.s.
	\begin{eqnarray}\label{x121}
		\cirx(\eta_1)&=&X_1^{\psi_1}\oone\{\psi_1=1\}+\cirx(0)\oone\{\psi_1=2\},\\
		X'(\eta_1)&=&X_1^{\psi_1}\oone\{\psi_1=2\}+X'(0)\oone\{\psi_1=1\}.
	\end{eqnarray} 
	
	For $n\geq2$, having defined $\zeta_j$, $\eta_j$, $\psi_j$, $X_j^i$, $j<n$, $i=1,2$, let
	$\zeta_n$ denote the earliest point of $\cirn_{\cirx(\eta_{n-1})}(\eta_{n-1})\cup \cN'_{X'(\eta_{n-1})}(\eta_{n-1})$,
	where for $\x\in\Zd$ and $t\geq0$, $\cirn_\x(t)$, $\cN'_\x(t)$ denote the points of $\cirn_\x$, $\cN'_\x$ with time coordinates above $t$,
	respectively. 
	
	Let now $\eta_n$ denote the time coordinate of $\zeta_n$, and set 
	$\psi_n= 1+\oone\{\zeta_n\in\cN'_{X'(\eta_{n-1})}(\eta_{n-1})\}$, and
	\begin{equation}\label{psin}
		X_n^i:=
		\begin{cases}
			\,\,\,\,\cirxi_{\zeta_n},&\text{ if } \psi_1=1\text{ and } i=1,\\
			-\brexi_{\zeta_n},&\text{ if } \psi_1=1\text{ and } i=2,\\
			\,\,\,\,\xi''_{\zeta_n},&\text{ if } \psi_1=2\text{ and } i=1,\\
			-\xi'_{\zeta_n},&\text{ if } \psi_1=2\text{ and } i=2.
		\end{cases}	
	\end{equation}
	Notice that $\{X_j^i;\,1\leq j\leq n,\,i=1,2\}$ are independent and $X_j^1$ and $X_j^2$ are distributed as $F_1$ and $F_2$, respectively,  for all $j$. Morever,  a.s.  
	\begin{eqnarray}\label{x12n}
		\cirx(\eta_n)&=&X_n^{\psi_n}\oone\{\psi_n=1\}+\cirx(\eta_{n-1})\oone\{\psi_n=2\},\\
		X'(\eta_n)&=&X_n^{\psi_n}\oone\{\psi_n=2\}+X'(\eta_{n-1})\oone\{\psi_n=1\}.
	\end{eqnarray}
	
	We then have that for $n\geq1$, $\sd_n=\sum_{j=1}^nX^{\psi_j}_j$. 
	One may readily check that (given $\circo$, $\o$, $\cirm$ and $\M'$) $\sd$ is a crw in the set up of Theorem 1 of~\cite{RHG91}, 
	an application of which readily yields the claim, and the proof of Theorem~\ref{teo:3.2ext} for $d\leq2$ is complete.

\end{subsection}

\begin{observacao}\label{symm}
	We did not find an extension of the above mentioned theorem of~\cite{RHG91} to $d=2$, 
	or any other way to show recurrence of $\left(\sd_n\right)$ for general asymmetric $\pi$
	within the conditions of Theorem~\ref{teo:3.2ext}.
\end{observacao}

\begin{subsection}{Proof of Theorem~\ref{teo:3.2ext} for $d\geq3$}
	\label{3+d}
	
	We now cannot expect to have recurrence of $\sd$, quite on the contrary, but transience suggests that we may have enough of a regeneration scheme, and we pursue precisely this idea, in order to implement which,  we resort to {\em cut times} of the trajectory of $(\xn)$, to ensure the existence of infinitely many of which, we need to restrict to boundedly supported $\pi$'s.
	
	We will be rather sketchy in this subsection, since the ideas are all quite simple and/or have appeared before in a similar guise.
	
	We now discuss a key concept and ingredient of our argument: cut times for $\sx=(\xn)$. First some notation: 
	for $i,j\in\N$, $i\leq j$, let $\sx[i,j]:=\bigcup\limits_{k=i}^j\{\sx_k\}$,   and $\sx[i,\infty):=\bigcup\limits_{l=1}^\infty\sx[i,l]$, and set	
	\begin{equation}
		\sK_1=\inf{\lbrace n\in\N:\sx[0,n]\cap \sx[n+1,\infty)=\emptyset\rbrace},
	\end{equation}	
	\noindent and, recursively, for $\ell\geq 2$,	
	\begin{equation}
		\sK_\ell:=\inf{\lbrace n>\sK_{\ell-1}:\sx[0,n]\cap \sx[n+1,\infty)=\emptyset\rbrace}.
	\end{equation}	
	\noindent $(\sK_\ell)_{\ell\in\N_*}$ is a sequence of {\em cut times} for $(\xn)$; under our conditions, it is ensured to be an a.s.~well defined infinite sequence of finite entries, according to Theorem 1.2 of~\cite{JP97}.
	
	We will have three versions of the environment coupled in a coalescent way, as above, with different initial conditions: $\circo$, starting from $\zero$;
	$\o$, starting from $\hat\mu_0$; and $\tio$, starting from $\hat\nu$; in particular, we have that $\circo_\x(t)\leq\o_\x(t),\tio_\x(t)$ for all $\x\in\Zd$ and $t\geq0$. We may suppose that the initial conditions of $\o$ and $\tio$ are independent.
	
	We now consider several coupled versions of versions of our random walk, starting with two: $X$, in the environment $\o$, as in the statement of Theorem~\ref{teo:3.2ext}; and $\cirx$, in the environment $\circo$. $X$ and $\cirx$ are constructed from
	the same $\sx$ and $\sV$, following the alternative construction of  Subsection~\ref{sec:2.2}. Let $\vs_\ell$ and $\mvs_\ell$ be the time $X$ and $\cirx$ take to give $\sK_\ell$ jumps, respectively. It may be readily checked, similarly as in Section~\ref{conv} --- see~(\ref{comp3}, \ref{sub}) ---, from the environmental monotonicity pointed to in the above paragraph and the present construction of $X$ and $\cirx$, that $\mvs_\ell\leq\vs_\ell$ for all $\ell\in\N_*$.
	
	Finally, for each $\ell\in\N_*$, we consider three modifications of $\cirx$ and $X$, namely, $\cirx_\ell$, $X_\ell$ and $X'_\ell$, defined as follows:
	\begin{equation}\label{chex}
		\cirx_\ell(t)=
		\begin{cases}
			\hspace{2cm}\cirx(t),&\text{ for } t\leq\mvs_\ell,\\
			\text{evolves in the environment } \tio,&\text{ for } t>\mvs_\ell;
		\end{cases}	
	\end{equation}
	\begin{equation}\label{xp}
		X_\ell(t)=
		\begin{cases}
			\hspace{2cm}X(t),&\text{ for } t\leq\vs_\ell,\\
			\text{evolves in the environment } \tio,&\text{ for } t>\vs_\ell;
		\end{cases}	
	\end{equation}
	\begin{equation}\label{xpp}
		X'_\ell(t)=
		\begin{cases}
			\hspace{2cm}X(t),&\text{ for } t\leq\vs_\ell,\\
			\text{evolves in the environment } \tio(\cdot-\vs_\ell+\mvs_\ell),&\text{ for } t>\vs_\ell.
		\end{cases}	
	\end{equation}
	
	Let $U$ denote the first time after which $\cirx$ and $X$ see the same environments $\circo,\o,\tio$ (from where they stand at each subsequent time).
	Lemma~\ref{coupenv} ensures that $U$ is a.s.~finite. Let us consider the event $\alt:=\{t>\vs_\ell>U\}$. 
	It readily follows that in $\alt$
	\begin{equation}\label{eqs1}
		\cirx(t)=\cirx_\ell(t)=X'_\ell(t+\vs_\ell-\mvs_\ell)\text{ and }X(t)=X_\ell(t).
	\end{equation}

	Given $\circo,\o,\tio$, let $P^{\,\circo,\o,\tio}$ denote the probability measure underlying our coupled random walks.  Since $\hat\nu$ is invariant for the environmental BD processes, it follows readily from our construction that $P^{\,\circo,\o,\tio}(X_\ell\in\cdot)$ and $P^{\,\circo,\o,\tio}(X'_\ell\in\cdot)$
	have the same distribution (as random probability measures). 
	
	For $R=(-\infty,r_1)\times\cdots\times(-\infty,r_d)$ a semi-infinite open hyperrectangle of $\R^d$, we have that
	\begin{eqnarray}\nn
		&\big|P^{\,\circo,\o,\tio}\big(X(t)\in R\sqrt{\ga t}\,\big)-P^{\,\circo,\o,\tio}\big(X_\ell(t)\in R\sqrt{\ga t}\,\big)\big|&\\\label{eqs2}
		&\leq P^{\,\circo,\o,\tio}(\alt^c)
		\leq P^{\,\circo,\o,\tio}(\vs_\ell\geq t)+P^{\,\circo,\o,\tio}(U\geq\vs_\ell)&
	\end{eqnarray}
	---  as before, $\ga=1/\mu$; see statement of  Theorem~\ref{teo:3.2ext} ---, and it follows that 
	\begin{equation}\label{eqs2a}
		\limsup_{\ell\to\infty}\limsup_{t\to\infty}
		\big|P^{\,\circo,\o,\tio}\big(X(t)\in R\sqrt{\ga t}\,\big)-P^{\,\circo,\o,\tio}\big(X_\ell(t)\in R\sqrt{\ga t}\,\big)\big|=0
	\end{equation}
	for a.e.~$\circo,\o,\tio$.
	
	Similarly, we find that for a.e.~$\circo,\o,\tio$,
	\begin{equation}\label{eqs3}
		\limsup_{\ell\to\infty}\limsup_{t\to\infty}
		\big|P^{\,\circo,\o,\tio}\big(X'_\ell(t)\in R\sqrt{\ga t}\,\big)-P^{\,\circo,\o,\tio}\big(\cirx((t-\d_\ell)^+)\in R\sqrt{\ga t}\,\big)\big|=0,
	\end{equation}
	where $\d_\ell=\vs_\ell-\mvs_\ell$.
	
	Now letting $\blt$ denote the event $\big\{\|\cirx((t-\d_\ell)^+)-\cirx(t)\|\leq\eps\sqrt{\ga t}\big\}$, where $\eps>0$, we have that
	\begin{eqnarray}\nn
		&\big|P^{\,\circo,\o,\tio}\big(\cirx((t-\d_\ell)^+)\in R\sqrt{\ga t}\,\big)-P^{\,\circo,\o,\tio}\big(\cirx(t)\in R\sqrt{\ga t}\,\big)\big|&\\\label{eqs4}
		&\leq P^{\,\circo,\o,\tio}\big(\cirx(t)\in(R^+_\eps\setminus R^-_\eps)\sqrt{\ga t}\,\big)+ P^{\,\circo,\o,\tio}\big(\blt^c\big),&
	\end{eqnarray}
	where $R^\pm_\eps=(-\infty,r_1\pm\eps)\times\cdots\times(-\infty,r_d\pm\eps)$.
	
	We now claim that for all $\ell\in\N_*$ and $\eps>0$
	\begin{equation}\label{eqs5}
		\limsup_{t\to\infty}P^{\,\circo,\o,\tio}\big(\blt^c\big)=0
	\end{equation}
	for a.e.~$\circo,\o,\tio$.
	
	It then follows from~(\ref{eqs3}, \ref{eqs4}, \ref{eqs5}) and Theorem~\ref{teo:3.2} that for $\eps>0$
	\begin{equation}\label{eqs6}
		\limsup_{\ell\to\infty}\limsup_{t\to\infty}
		\big|P^{\,\circo,\o,\tio}\big(X'_\ell(t)\in R\sqrt{\ga t}\,\big)-\Phi(R)\big|\leq\Phi(R^+_\eps\setminus R^-_\eps)
	\end{equation}
	for a.e.~$\circo,\o,\tio$, where $\Phi$ is the $d$-dimensional centered Gaussian probability measure with covariance matrix $\Sigma$. 
	Since $\eps$ is arbitrary, and the left hand side of~\eqref{eqs6} does not depend on $\eps$,  we find that it vanishes for a.e.~$\circo,\o,\tio$.
	
	
	From the remark in the paragraph right below~\eqref{eqs1}, we have that $P^{\,\circo,\o,\tio}(X_\ell(t)\in R\sqrt{\ga t})$ is distributed as 
	$P^{\,\circo,\o,\tio}\big(X'_\ell(t)\in R\sqrt{\ga t}\,\big)$; it follows that
	\begin{equation}\label{eqs7}
		\limsup_{\ell\to\infty}\limsup_{t\to\infty}\big|P^{\,\circo,\o,\tio}\big(X_\ell(t)\in R\sqrt{\ga t}\,\big)-\Phi(R)\big|=0
	\end{equation}
	for a.e.~$\circo,\o,\tio$, and it follows from~\eqref{eqs2a} that
	\begin{equation}\label{eqs8}
		\limsup_{t\to\infty}\big|P^{\,\circo,\o,\tio}\big(X(t)\in R\sqrt{\ga t}\,\big)-\Phi(R)\big|=0
	\end{equation}
	for a.e.~$\circo,\o,\tio$, which is the claim of Theorem~\ref{teo:3.2ext}.
	
	In order to complete the proof, it remains to establish~\eqref{eqs5}. For that, we first note that 
	\begin{equation}\label{bd1}
		\|\cirx((t-\d_\ell)^+)-\cirx(t)\|=\Big\|\sum_{i=\sN_{(t-\d_\ell)^+}}^{\sN_t}\xi_i\Big\|\leq K \big(\sN_t-\sN_{(t-\d_\ell)^+}\big),
	\end{equation}
	where $K$ is the radius of the support of $\pi$, and $\sN_t$, we recall from Subsection~\ref{sec:3.3/2}, 
	counts the jumps of $\cirx$ up to time $t$. Thus, the probability on the left hand side of~\eqref{eqs5} is bounded above by
	\begin{equation}\label{bd2}
		P^{\,\circo,\o,\tio}\big(\sN_t-\sN_{(t-u)^+}>\eps K^{-1}t\big)+P^{\,\circo,\o,\tio}\big(\d_\ell>u\big),
	\end{equation}
	where $u>0$ is arbitrary. 
	
	One may readily check from our conditions on $\vf$ that $\sN_t-\sN_{(t-u)^+}$ is stochastically dominated 
	by a Poisson distribution of mean $u$ for each $t$, and it follows that the first term in~\eqref{bd2} vanishes as $t\to\infty$ for a.e.~$\circo,\o,\tio$; 
	\eqref{eqs5} then follows since $u$ is arbitrary and $\d_\ell$ is finite a.s.

\end{subsection}



\section{Environment seen from the particle}
\label{env}

\setcounter{equation}{0}

\noindent
We finally turn, in the last section of this paper, to the behavior of the environment seen from the particle {\em at jump times}. 
Our aim is to derive the convergence of its distribution as time/the number of jumps diverges, and to compare the limiting 
distribution with the product of invariant distributions of the marginal BD processes.
The main result of this section, stated next, addresses these issues under different subsets of the following set of conditions 
on the parameters of our process.
\begin{enumerate}
	\item\begin{equation}\label{cond1}\E(\xi_1)\ne0;\end{equation}
	\item\begin{equation}\label{cond2}\E(\|\xi_1\|^{2+\ve})<\infty; \end{equation}
	\item  \begin{equation}\label{cond3}\sx \mbox{ is transient, and } \pi \mbox{ has bounded support}; \end{equation}
	\item \begin{equation}\label{cond4}\inf_{n\geq0}\vf(n)>0,\end{equation}
\end{enumerate} 
out of which we compose the following conditions:
\begin{itemize}
	\item[$1'.$] Conditions~\eqref{cond1} and~\eqref{cond4} hold;
	\item[$2'.$] Conditions~\eqref{cond2} and~\eqref{cond4} hold;
	\item[$3'.$] Conditions~\eqref{cond3} and~\eqref{cond4} hold.
\end{itemize}
We note that in neither case we require monotonicity of $\vf$ \footnote{which is bounded above (by 1), as elsewhere in this paper}.

As anticipated, we focus on the homogeneously distributed case of the environment (i.e., we assume, as in the previous section, that
$p_n\equiv p\in(0,1/2)$) starting from a product of identical distributions on $\N$ with a positive exponential moment, 
and we will additionally assume that $\pi$ either has non zero mean, or has a larger than 2 moment.

Let $\o$ be a family of iid  homogeneous ergodic BD processes on $\N$ indexed by $\Zd$, starting from $\hat\mu_0$ as in the 
paragraph of~\eqref{expdec} of Section~\ref{ext}, and let $X$ be a time inhomogeneous random walk on $\Zd$ starting from 
$\zero$ in the environment $\o$, as in the prrevious sections. Let us recall that $\tau_n$ denotes the time of the $n$-th jump of $X$, 
$n\geq1$, and consider
\begin{equation}\label{envs}
	\vp_\x(n)=\o_{X(\tau_n-)+\x}(\tau_n),\,\x\in\Zd.
\end{equation}
$\vp(n):=\{\vp_\x(n),\,\x\in\Zd\}$ represents the environment seen by the particle {\em right before} its $n$-th jump.

\begin{teorema} \label{conv_env} \mbox{}
	Assume the condition stipulated on $\hat\mu_0$ in Section~\ref{ext} \footnote{see paragraph of~\eqref{expdec}}
	and suppose that $\E(\|\xi_1\|)<\infty$, and that, of the conditions listed above, at beginning of this section, either $1$, $2'$ or $3$ hold.
	Then 
	\begin{itemize}
		\item[$1.$] $\vp(n)$ converges in  $\P_{\hat{\mu}_{{0}}}$-distribution  (in the product topology on $\N^{\Zd}$) to $\vp:=\{\vp_\x,\,\x\in\Zd\}$,
		whose distribution does not depend on the particulars of initial distribution\footnotemark.
	\end{itemize}
	Moreover, if either $1'$, $2'$ or $3'$ hold, then
	\begin{itemize}
		\item[$2.$]  $\vp$ is absolutely continuous with respect to  $\hat\nu$.
	\end{itemize}
\end{teorema}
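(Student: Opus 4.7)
The plan is to view $\vp(n)$ as a Markov chain on $\N^{\Zd}$ whose one-step transition consists of letting the environment evolve for an $\sV_{n+1}$-adjusted time (via the alternative construction of Subsection~\ref{sec:2.2}), then shifting by $-\xi_{n+1}$, with a size-biasing by $\vf$ implicit at the origin. Convergence (Part~1) will be established case by case according to conditions $1$, $2'$ or $3$, and absolute continuity (Part~2) will be obtained by starting from the invariant product measure $\hat\nu$ and controlling finite-dimensional densities using the ellipticity hypothesis~\eqref{cond4}.

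For Part~1 the three cases call for rather different treatments. Under condition~$1$, the LLN of Theorem~\ref{teo:3.1} gives $\|X(\tau_n)\|\to\infty$ at linear speed, so the particle spends most of its time entering fresh sites. Coupling $\o$ (started from $\hat\mu_0$) with a version $\tio$ started from $\hat\nu$ in a coalescing way as in Section~\ref{ext}, a finite-box adaptation of Lemma~\ref{coupenv} shows that any fixed finite window $F$ around $X(\tau_n-)$ is entirely coalesced with overwhelming probability as $n\to\infty$; convergence then reduces to the $\hat\nu$-initialised case where invariance of each marginal delivers tightness for free. Under condition $3$ (transience and bounded support of $\pi$), infinitely many cut times $\sK_\ell$ exist almost surely (Theorem~1.2 of~\cite{JP97}), and regeneration at the latest cut time before $n$ decouples $\vp(n)$ from the initial environment on the trajectory's past, analogously to the argument in Subsection~\ref{3+d}. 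Condition~$2'$ is the delicate one, since the walk may be recurrent: here ellipticity is crucial. I would use $\vf\geq\vf_\infty>0$ to show that for every $\x$, the marginal $\vp_\x(n)$ is, uniformly in $n$ and the starting environment, stochastically dominated by a $BDP(p,q)$ evaluated at a time lower-bounded by an $\mathrm{Exp}(\vf_\infty)$ variable --- the exponential tail of $\hat\mu_0$ and standard BD estimates then yield tightness in the product topology. Identification of the limit on any convergent subsequence is then a standard Feller-continuity-plus-invariance argument for the Markov chain $\vp$.

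For Part~2, start the environment from $\hat\nu$ and call the resulting stationary limit $\bar\vp$ (its existence follows from Cesaro averages and the tightness of Part~1). Fix a finite $F\subset\Zd$: conditionally on the trajectory of the particle and the jump times, the joint law of $\{\vp_\x(n):\x\in F\}$ is that of independent $BDP(p,q)$'s at the sites $X(\tau_n-)+F$, each started from $\nu$ and run a random time, with an additional size-biasing by $\vf/\Er_\nu(\vf)\in[\vf_\infty,\vf_\infty^{-1}]$ at the particle's site. Both operations preserve absolute continuity with respect to $\nu^F$, with a Radon--Nikodym derivative uniformly bounded in $L^\infty$; hence the projection of $\bar\vp$ onto $F$ is absolutely continuous with respect to $\nu^F$, with density uniformly bounded. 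Extending to the full product via Kolmogorov's consistency and a martingale argument on densities on the filtration $(\F_F)_{F\uparrow\Zd}$ (the uniform $L^\infty$ bounds ensure uniform integrability of the corresponding Radon--Nikodym martingale) yields $\bar\vp\ll\hat\nu$.

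The hardest step is tightness in case $2'$: without monotonicity of $\vf$, the stochastic domination engine of Section~\ref{conv} is unavailable, and we must rely on the "brute force" lower bound $\vf\geq\vf_\infty>0$ to force the marginal of $\vp_\x(n)$ to relax toward $\nu$ sufficiently rapidly between consecutive visits of the particle to site $X(\tau_n-)+\x$. Once tightness is in hand, identifying the limit and proving absolute continuity are comparatively routine manipulations of the kernel of $\vp$.
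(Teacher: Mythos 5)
Your Part~1 follows the same case split as the paper and is conceptually close under Conditions~$1$ and $3$ (cut times, coalescence), but under Condition~$2'$ the proposal is underspecified and the paper's mechanism is substantively different: the paper does not prove ``tightness plus Feller-plus-invariance.'' Even granting your proposed stochastic domination, tightness of the marginals gives only subsequential weak limits; you would still have to rule out dependence of the limit on the subsequence and on $\hat\mu_0$, and uniqueness of the invariant measure for a Feller chain on a non-compact Polish state space is not automatic. The paper instead conditions on the \emph{backward} embedded chain $\sz$ (which is independent of the environment), introduces a hierarchy of record/return times $\fvt_\ell,\bvt_\ell$ for $\sz$, and uses the tail bound of Lemma~\ref{benv} to establish directly that the conditional law of $\vp^M(n)$ given $\sz$ is Cauchy in $n$, by coupling the environment on a polynomially growing box to a fresh $\hat\nu$-started environment at the time $\tau_{n-\fvt_{\L_m}}$. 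You should be aware that this is not a textbook reduction: the Cauchy property is what delivers uniqueness and the independence from the initial distribution in one stroke.

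Your Part~2 has a genuine gap. The assertion that, conditionally on the trajectory of the particle \emph{and the jump times}, the law of $\{\vp_\x(n):\x\in F\}$ is a product of $BDP(p,q)$'s run for a random time with only a single size-biasing factor $\vf/\Er_\nu(\vf)$ at the origin is not correct. Conditioning on the jump times biases the environment at every visited site through the Radon--Nikodym factor
$\prod_{k}\vf(\o_{\sx_{k}}(\tau_{k+1}))\,\exp\bigl(-\int_{\tau_k}^{\tau_{k+1}}\vf(\o_{\sx_k}(s))\,ds\bigr)$
accumulated over the whole history, not merely the last step. Since a recurrent walk revisits the same sites unboundedly often, these factors compound and there is no uniform $L^\infty$ bound on the finite-dimensional densities; the ensuing martingale argument on a uniformly integrable Radon--Nikodym sequence therefore does not get off the ground. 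The paper avoids conditioning on jump times entirely: it conditions only on $\sz$ (which \emph{is} independent of $\o$), shows via Lemma~\ref{benv} and a coalescing coupling that for $\|\x\|\ge N$ the marginals $\vp^\sz_\x$ agree with a fresh $\nu$-started coordinate outside an event of probability $\le e^{-cN}$, and then slices any $\hat\nu$-null set $A$ along cylinders over the box $\fQ_N$ to get $\check\nu(A_\eta)=0$ for every $\eta$, so that $\Pr(\vp^\sz\in A)$ is bounded by a coupling discrepancy vanishing as $N\to\infty$. This is a qualitative ``far coordinates are at equilibrium'' argument, not a quantitative density bound, and it is essential precisely because the density bound you posit does not hold.
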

\footnotetext{i.e., it equals the one for the case where $\hat\mu_0=\hat\nu$}

\begin{observacao}\label{mono}
	There may be a way to adapt our approach in this section to relax/modify Condition 4 to some extent, by, say, requiring a slow decay of $\vf$ at $\infty$, perhaps adding monotonicity, as in the previous sections. But we do not feel that a full relaxation of that condition, even if imposing monotonicity, is within the present approach, at least not without substantial new ideas (to control tightness to a sufficient extent).
\end{observacao}

\begin{observacao}\label{short}
	As with results in previous sections, we do not expect our conditions for the above results to be close to optimal; again, our aim is to give reasonably natural conditions under which we are able to present an argument in a reasonably simple way. A glaring gap in our conditions is the mean zero $\xi_1$, not bounded away from zero $\vf$ case, even assuming monotonicity of $\vf$, as we did for the results in previous sections;  
	notice that the domination implied 
	by~(\ref{comp1},\ref{comp2}) holds for jump times of  $\brex$, not of $X$, and is thus not directly applicable, nor did we find an indirect application of it, or another way to obtain enough tightness for the environment at jump times of $X$ to get our argument going in that case.
\end{observacao}

As a preliminary for the proof of Theorem~\ref{conv_env}, we consider the (prolonged)  {\em backwards in time}  
random walk starting from $X(\tau_n-)$ (and moving backwards in time)
$\cx=\{\cx_\ell,\,\ell\in\N\}$ such  that $\cx_0=0$ and for $\ell\in\N_*$
\begin{equation}\label{back}
	\cx_\ell=\sum_{i=n-\ell}^{n-1}\xi'_i,
\end{equation}
where $\xi'_i=-\xi_i$, $i\geq1$, and we have prolonged $\xi$ to non positive integer indices in an iid way. 

Notice  that, for all $n\in\N_*$, $\cx$ is a random walk starting from $\zero$ whose (iid) jumps are distributed as $-\xi_1$
(and thus its distribution does not depend on $n$);
notice also  that $\cx_\ell=\sx_{n-1-\ell}-\sx_{n-1}$ for $0\leq\ell\leq n-1$.

It is indeed convenient to use a single backward random walk $\sz$ (with the same distribution as $\sy$) for all $n$. 
So in many arguments below, we condition on the trajectory of $\sz$ (which appears as a superscript in (conditional) probabilities below).

\medskip

\noindent{\em Proof of Theorem~\ref{conv_env}.}

We devote the remainder of this section for this proof.
Let $M$ be an arbitrary  positive integer, and consider the random vector 
\begin{equation}\label{vpm}
	\vp^M(n):=\{\vp_\x(n),\,\|\x\|\leq M\}.
\end{equation}
To establish the first assertion of Theorem~\ref{conv_env},  it is enough to show that $\vp^M(n)$ converges in distribution as $n\to\infty$.

We start by outlining a fairly straightforward  argument for the first assertion of  Theorem~\ref{conv_env} under Condition 3. In this case, we are again (as in the argument for the case of $d\geq3$ of Theorem~\ref {gclt} above), under the conditions for which we have cut times for the trajectory of $\sz$. 
It follows that there a.s.~exists a finite cut time $T_M$ such that the trajectory of $\sz$ after $T_M$ never visits $\{\x,\,\|\x\|\leq M\}$.
Then, assuming that the environment is started from $\bar\nu$, we have that, as soon as $n>T_M$, the conditional distribution of $\vp^M(n)$ given $\sz$ equals that of $\check\vp^{M,\sz}$, which is defined to be the  $\{\x,\,\|\x\|\leq M\}$-marginal of the environment of a process $(X,\o)$, with $\o$ started from $\hat\nu$ and $\sx$ started from $\sz_{T_M}$, seen at the time of the $T_M$-th jump of $X$ around the position it occupied immediately before that jump, with $\sx_\ell=\sz_{T_M-\ell}$, $0\leq\ell\leq T_M$. Notice that the result of the integration of the distribution of $\check\vp^{M,\sz}$ with respect to the distribution of $\sz$ does not depend on $n$; we may denote by $\check\vp^M$ the random vector having such (integrated) distribution. It is thus quite clear that $\vp^M(n)$ converges to $\check\vp^M$  in distribution as $n\to\infty$. That this also holds under the more general assumption on the initial environment stated in  Theorem~\ref{conv_env} can be readily argued via a coupling argument between $\hat\mu_0$ and $\hat\nu$, 
as done in Section~\ref{ext}. This concludes the proof of  Theorem~\ref{conv_env} under Condition 3.

Below, a similar argument, not however using cut times, will be outlined for the case where Condition 1 holds --- see Subsubsection~\ref{mune0}.

In order to obtain convergence of $\vp^M(n)$ when 
$\E(\xi_1)=0$, and either $d\leq2$ or $\pi$ has unbounded support, 
we require a bound on the tail of the  distribution of single-site marginal distributions of the environment at approriate jump times of $X$, to be specified below. (We also need Condition 2.)
In order to find such bound, we felt the need to further impose Condition 4. 
A bound of the same kind will also enter our argument for the second assertion of Theorem~\ref{conv_env}.
We devote the next subsection for obtaining this bound, and the two subsequent subsections for the conclusion of the proof Theorem~\ref{conv_env}.

\subsection{Bound on the tail of the marginal distribution of the environment}

\begin{lema}\label{benv} 
	Let $\x\in\Zd,\,m\geq1$ and suppose $\cR$ is a stopping time of $\sz$ such that $\cR\geq m$ a.s., and on $\{m\leq\cR<\infty\}$ we have that
	$\sz_\cR=\x$  and $\sz_{\cR-i}\ne\x$, $i=1,\ldots,m$. 
	Then, assuming that $\E(\xi_1)=0$, $\E(\|\xi_1\|^2)<\infty$ and that~\eqref{cond4} holds,  there exist a constant $\alpha>0$ and  $m_0\geq1$ 
	such that for all $m\geq m_0$, outside of an event involving $\sz$ alone of probability exponentially decaying in $m$, we have that  
	\begin{equation}\label{benv1} 
		\P^\sz_{\hat\mu_0}(\o_{\x-\sz_{n-1}}(\tau_{(n-\cR+\frac m2)^+})> (\log m)^2))\leq e^{-\alpha  (\log m)^2},
	\end{equation}
	for all $n\geq0$,  where $\P^\sz_{\hat\mu_0}$ denotes  the conditional probability $\P_{\hat\mu_0}(\cdot|\sz)$.
\end{lema}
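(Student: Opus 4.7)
The plan is: in the main case $n-\cR+m/2>0$ (the complementary case $(n-\cR+m/2)^+=0$ reduces immediately to bounding $\o_\y(0)$, which has exponential tail by the assumption $\hat\mu_{\y,0}\preceq\bar\mu$ and~\eqref{expdec}), decompose
\begin{equation*}
\tau_{n-\cR+m/2}=\tau_{n-\cR}+S,\qquad S:=\sum_{j=n-\cR+1}^{n-\cR+m/2}\Delta_j,
\end{equation*}
with $\y:=\x-\sz_{n-1}$, and observe that by the non-return hypothesis $\sz_{\cR-i}\ne\x$, $i=1,\ldots,m$, each site $\sx_{j-1}$ appearing in the $\Delta_j$'s of $S$ differs from $\y$. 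Under $\P_{\hat\mu_0}^\sz$, $S$ is thus a function of $\{(\o_\z)_{\z\ne\y},(\sV_j)\}$ and is independent of the trajectory $(\o_\y(t))_{t\ge\tau_{n-\cR}}$, which by the Markov property is a fresh $BDP(p,q)$ issuing from $\o_\y(\tau_{n-\cR})$.

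Two standard inputs then enter. First, since $\vf\le1$ the alternative construction of Subsection~\ref{sec:2.2} gives $\Delta_j\ge\sV_j$, whence $S\ge\sum_{j=1}^{m/2}\sV_j$; a Cramér bound then yields $\P^\sz(S<m/4)\le e^{-c_1m}$, uniformly in $\sz$ and $n$. Second, the BDP $\o_\y$ enjoys a spectral gap due to the downward drift $p<q$: picking $r>1$ so that $pr+q/r<1$, one has $\|P^s(n_0,\cdot)-\nu\|_{TV}\le C(1+r^{n_0})e^{-\lambda s}$ for some $\lambda>0$, together with the geometric tail $\nu([K,\infty))=e^{-\alpha K}$ with $\alpha:=\log(q/p)$. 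Conditioning on $(\o_\y(\tau_{n-\cR}),S)$ and using the above independence together with the exponential MGF estimate $\E^\sz[e^{-\lambda S}]\le e^{-c_2m}$, one gets
\begin{equation*}
\P_{\hat\mu_0}^\sz\!\bigl(\o_\y(\tau_{n-\cR+m/2})>(\log m)^2\bigr)\le e^{-\alpha(\log m)^2}+C\,\E^\sz\!\bigl[1+r^{\o_\y(\tau_{n-\cR})}\bigr]\,e^{-c_2m}+e^{-c_1m}.
\end{equation*}

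The main obstacle is controlling $\E^\sz[r^{\o_\y(\tau_{n-\cR})}]$: since $\tau_{n-\cR}$ depends on $\o_\y$ through the particle's earlier visits to $\y$, it is not a stopping time of $\o_\y$ alone, and the departure rate $\vf(\o_\y)\in[c,1]$ (Condition~\eqref{cond4}) couples the two via size-biasing. I would use the generator identity $L(r^n)\le-\delta r^n+D\oone_{\{n=0\}}$, with $\delta:=1-pr-q/r>0$, together with an optional-stopping argument on the associated martingale, to obtain $\E^\sz[r^{\o_\y(\tau_{n-\cR})}]\le r^{\o_\y(0)}+D\,\E^\sz[\tau_{n-\cR}]$; since $\vf\ge c$ yields $\Delta_j\le\sV_j/c$, the latter is $\le(n-\cR)/c$, and the resulting bound is linear in $n$. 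The ``bad $\sz$-event'' of probability $\le e^{-c_3m}$ would then be chosen (via standard random-walk return-time/local-time estimates) to rule out atypical backward trajectories in which the decomposition cannot be iterated at an intermediate non-return epoch between steps $0$ and $\cR$; outside it, iterating reduces the effective $n$ to sub-exponential in $m$, so that $\E^\sz[\tau_{n-\cR}]\,e^{-c_2m}\ll e^{-\alpha(\log m)^2}$. Putting all estimates together and using $(\log m)^2\ll m$, each error term is $\le e^{-\alpha(\log m)^2}$ for $m\ge m_0$, proving the claim after absorbing constants into $\alpha$.
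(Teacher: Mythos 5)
The final step of your argument---relaxation of the autonomous BDP at $\y$ to equilibrium over the non-return window $[\cR-m/2,\cR-1]$, yielding a time budget $S\geq\sum_{j=1}^{m/2}\sV_j\geq m/4$ with high probability, combined with a Lyapunov/TV bound of the form $C(1+r^{\o_\y(\tau_{n-\cR})})e^{-\lambda S}$ and the geometric tail of $\nu$---is exactly how the paper closes the proof, so your intuition about where the exponential $(\log m)^2$ decay comes from is right. The gap is in controlling $\E^\sz_{\hat\mu_0}[r^{\o_\y(\tau_{n-\cR})}]$ \emph{uniformly in $n$}. Your optional-stopping bound $r^{\o_\y(0)}+D\,\E^\sz[\tau_{n-\cR}]\leq r^{\o_\y(0)}+D(n-\cR)/c$ is linear in $n$, while the error it multiplies, $e^{-c_2 m}$, has no $n$-dependence; since the lemma asserts the bound for all $n\geq0$ with no relation between $n$ and $m$, this estimate cannot close the argument for large $n$. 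Your remark that ``iterating at intermediate non-return epochs'' should repair this gestures at the right idea, but it is precisely the content of the entire proof, and it cannot be done naively: the return times of $\sz$ to $\x$ are heavy-tailed (polynomial), so one cannot simply find, with exponentially small failure probability, another epoch a macroscopic time in the past where $\sz$ stays away from $\x$ long enough.

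The paper resolves this with a specific renewal structure that is worth singling out, because it is the key missing idea. One partitions the visits $\theta_0=\cR,\theta_1,\theta_2,\ldots$ of $\sz$ to $\x$ into blocks $\cI_k$, separated by gaps $\cI'_k$ of length $\geq km$, where $I_k=\inf\{i>I_{k-1}:\theta_i-\theta_{i-1}>km\}$; by Proposition~32.3 of~\cite{Spi76} the probability of a gap $>km$ after a visit is $\geq c/\sqrt{km}$, so within $(km)^2$ visits a long gap appears except with probability $e^{-ckm}$, whence $|\cI_k|\leq(km)^3$ on a good $\sz$-event $\cJ$ of probability $\geq 1-e^{-cm}$ (this is the ``event involving $\sz$ alone'' in the statement). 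Within each long gap $\cI'_k$ the BDP at $\y$ is coupled to an equilibrium version ($\oq_k$) and to a dominating version ($\op_k$) started from the running maximum; both hit $0$ during the gap except with probability $e^{-ckm}$, forcing coalescence and hence a genuine \emph{renewal} to equilibrium. The accumulated contribution of the clustered visits in $\cI_k$ is absorbed by the controlled block size $(km)^3$ together with Lemma~\ref{fbd}. Summing the failure probabilities geometrically over $k$ gives a bound independent of $n$. Without some version of this block/renewal construction, the bias of $\o_\y(\tau_{n-\cR})$ from the walk's visits to $\y$ remains unbounded in $n$, and your proposal does not go through.
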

\begin{proof}
	Given $n\geq0$, let us denote $\o_{\x-\sz_{n-1}}$ by $\o'_{\x}$.
	
	For $n\leq\cR$ 
	from Lemma~\ref{fbe}. 
	For this reason, we may restrict to the event where $n\geq\cR$.
	
	Let $\cR=\theta_0, \theta_1, \theta_2,\ldots$ be the successive visits of $\sz$ to $\x$ starting at $\cR$; 
	this may be a finite set of times; 
	set $I_0=0$,
	and, for $k>0$, set $I_k=\inf\{i>I_{k-1}:\theta_{i}-\theta_{i-1}> km\}$, $\cI_k=[\theta_{I_{k-1}},\ldots,\theta_{I_k-1}]\cap\Z$,
	$\cI'_k=(\theta_{I_{k}-1},\ldots,\theta_{I_k})\cap\Z$; 
	and let $\cI'=\cup_{k\geq1}\cI'_k$. Notice that  
	\begin{equation}\label{jk1}
		\sz\ne\x \text{ on }\cI', \text{ and }\, |\cI'_k|\geq km, \, k\geq1.
	\end{equation}
	Now let us consider $|\cI_k|$. We will bound the upper tail of its distribution. This will be based on a  bound to the upper tail of the distribution
	of $I_k-I_{k-1}$. A moment's thought reveals that, over all the cases of $\pi$ comprised in our assumptions, the worst case is the one dimensional, 
	recurrent case.
	For this case, and thus for all cases, Proposition 32.3 of~\cite{Spi76} yields that 
	\begin{equation}\label{ik1}
		\P\big(I_k-I_{k-1}>(km)^2\big)=\big[\P(\theta_1-\theta_0\leq km)\big]^{(km)^2}\leq \Big(1-\frac c{\sqrt{km}}\Big)^{(km)^2}\leq e^{-ckm}
	\end{equation}
	as soon as $m$ is large enough, where (here and below) $c$ is a positive real constant, not necesarily the same in each appearance. 
	(Here and below, we omit the subscript in the probability symbol when it may be restricted to the distribution of $\sz$ only.)
	We now note that $|\cI_k|\preceq km(I_k-I_{k-1})$,  and it follows that
	\begin{equation}\label{ik2}
		\P\big(|\cI_k|>(km)^{3}\big)\leq e^{-ckm}.
	\end{equation}
	
	It readily follows that, setting $\cJ=\cJ(\x,m)=\cap_{k\geq1}\big\{ |\cI_k|\leq(km)^{3}\big\}$, we have that
	\begin{equation}\label{k1}
		\P\big(\cJ^c\big)\leq e^{-cm},
	\end{equation}
	as soon as $m$ is large enough.
	
	Now, given $\sz$, let $K$ be such that $0\in\cI_K\cup\cI'_K$. We will assume that $\sz\in\cJ$ and $n\geq\cR$, and bound the conditional distribution of $\o'_{\x}(\tau_{n-\cR})$ given such $\sz$, via coupling, as follows.
	
	It is quite clear from the characteristics of $X$ that, given the boundedness assumption on $\vf$,  its jump times can be stochastically bounded from above and below by a exponential  random variables with rates 1 and $\d:=\inf\vf$, respectively, independent of $\o$.
	
	Let us for now 
	consider the succesive continuous time intervals $\mi_k,\mi_k'$, $1\leq k<K$ in the timeline of  $X$, 
	during which $\sz$ jumps in $\cI_k$, $\cI'_k$, $1\leq k\leq K$, respectively, if there is any such interval\footnote{$\mi_K'$ may be empty.}.
	We recall that time for $X$ and $\sz$ moves in different directions. 
	Let $\mt_k=|\mi_k|$, $\mt'_k=|\mi_k'|$ denote the respective interval lengths. 
	
	\begin{observacao}\label{frakbds}
		We note that, given our assumed bounds on $\vf$, whenever $1\leq k<K$, we have that $\mt'_k$ may be bounded from below by the sum of $km$ independent  standard exponential random variables. If $\sz\in\cJ$, then, for $1\leq k\leq K$, we have that $\mt_k$ may be bounded from above by the sum of $(km)^3$ iid exponential random variables of rate $\d$.
	\end{observacao}

	Whenever $K\geq2$, we introduce, enlarging the probability space if necessary, for each $1\leq k<K$, 
	versions of 
	$\o'_\x$ evolving at $\mi'_k$, namely $\oq_k$ and $\op_k$, coupled to $\o'_\x$ 
	so that, at $\tau_{n-\theta_{I_k}}$, $\oq_k$ is in equilibrium, and $\op_k$ equals the maximum of $\o$ in $\mi_{k+1}$, 
	and $\oq_k$, $\op_k$ and $\o'_\x$ evolve independently on $\mi'_k$ until any two of them meet, after which time they coalesce.
	Notice that it follows, in this case, that $\op_k\geq\o'_\x(\tau_{n-\theta_{I_k}})$.
	
	We now need an upper bound for the distribution of $\op_k$ at time $\tau_{n-\theta_{I_k}}$ assuming it starts from equilibrium  at time $\tau_{n-\theta_{I_k+1}}$. From the considerations on Remark~\ref{frakbds}, we readily find that it is bounded by the max of a BDP starting from
	equilibrium during a time of length given by the sum of $(km)^3$ iid exponential random variables of rate $\d$. In Appendix~\ref{frakbd}
	we give un upper bound for the tail of the latter random variable --- see Lemma~\ref{fbd} ---, which implies from the above reasoning that
	\begin{equation}\label{ub1}
		\sum_{w\geq0}\P^\sz\big(\op_k(\tau_{n-\theta_{I_k}})>(\log(km))^2|\o'_\x(\tau_{(n-\theta_{I_{k+1}-1})^+})=w\big)\nu(w)\leq e^{-c(\log(km))^2},
	\end{equation}
	$1\leq k<K$, where $\nu$, we recall, is the equilibrium distribution of the underlying environmental BDP. As follows from the proof of 
	Lemma~\ref{fbd} --- see Remark~\ref{gen_init} ---,~\eqref{ub1} also holds when we replace $\nu$ by any distribution on $\N$ with an 
	exponentially decaying tail; so, in particular, it holds for $k=K-1$ if we replace $\nu$ by the distribution of  
	$\o'_\x\big(\tau_{(n-\theta_{I_{K}-1})^+}\big)$, which may be checked to have such a tail --- see  Lemma~\ref{fbe}.  
	
	We now consider the events $A_k=\{\op_k(\tau_{n-\theta_{I_k}})\leq(\log(km))^2\}$, $k=1,\ldots, K$, and also the events
	$A'_k=\{$during $\mi'_k$, both $\oq_k$ and $\op_k$ visit the origin\}, $k=1,\dots,K-1$.
	
	\begin{observacao}\label{couple}
		In $A'_k$,  $\o'_\x$ and $\oq_k$ (and $\op_k$) coincide at time $\tau_{n-\theta_{I_k-1}}$. 
	\end{observacao}
	
	Given the drift of the BDP towards the origin, and the fact that $\cI'_k\geq km$, and also the lower bound on $\vf$, by a standard large deviation estimate, 
	we have that
	\begin{equation}\label{ub2}
		\P^\sz\big((A'_k)^c|A_k\big)\leq e^{-ckm},\,k=1,\ldots,K-1.
	\end{equation}
	
	Let us set $B_K=\cap_{k=1}^KA_k\cap\cap_{k=1}^{K-1}A'_k$, if $K\geq2$, and  $B_K\big|_{K=1}=A_1$. 
	From the reasoning in the latter two paragraphs above, 
	(given $\sz\in\cJ$, and minding Remark~\ref{couple}) we readily find that
	\begin{equation}\label{ub3}
		\P^\sz_{\hat\mu_0}\big((B_K)^c\big)\leq e^{-c(\log m)^2},\,K\geq2,
	\end{equation}
	and the same may be readily seen to hold also for $K=1$. Notice that the above bound is uniform in $K\geq1$.
	
	Combining this estimate with~\eqref{k1}, and from the fact that in $B_K$ we have that $\o'_{\x}(\tau_{n-\cR})\leq(\log m)^2$, and thus, given that 
	from $\cR-\frac m2$ to $\cR-1$, $\sz$ does not visit $\x$, and resorting again to a coupling of $\o'_\x$ to suitable $\oq_0$ and $\oq_0$ on the time interval $\big[\tau_{(n-\cR)^+},\tau_{(n-\cR+\frac m2)^+}\big]$, similarly as the ones of $\o'_\x$ to  $\oq_k$ and $\oq_k$ on $\mi'_k$, and to the lower bound on $\vf$, as well as to a standard large deviation estimate, as in the argument for~\eqref{ub2} above, the result  for the 
	case where $\o'_\x$ starts from equilibrium follows. If the initial distribution of $\o'_\x$ is not necessarily the equilibrium one, but 
	satisfies the conditions  of Section~\ref{ext}  --- see paragraph of~\eqref{expdec} ---, then, by the considerations at the end of the paragraph of~\eqref{ub1} above,  the ensuing arguments are readily seen to apply.

\end{proof}

\subsection{Conclusion of the proof of the first assertion of Theorem~\ref{conv_env}} 

\subsubsection{First case: $\E(\xi_1)=0$  and $d=1$}\label{m01}

\noindent
In this case we also assume, according to the conditions of Theorem~\ref{conv_env} , that $\E(|\xi_1|^{2+\ve})<\infty$,
for some $\ve>0$; we may assume, for simplicity, that $\ve\leq2$.

Given $\sz$ and $M\in\N$, consider the following (discrete) stopping times of $\sz$: 
$\fvt_1=\inf\{n>0: |\sz_n|>\Upsilon_1\}$, with $\Upsilon_1=M$, and for $\ell\geq1$,
\begin{eqnarray}\label{vt1}
	\bvt_{\ell}&=& 
	\begin{cases}
		\inf\big\{n>\fvt_{\ell}: \sz_n<\sz_{\fvt_{\ell}}\big\},&\text{ if } \sz_{\fvt_{\ell}}>0;\\
		\inf\big\{n>\fvt_{\ell}: \sz_n>\sz_{\fvt_{\ell}}\big\},&\text{ if } \sz_{\fvt_{\ell}}<0,
	\end{cases}\\\label{vt2}
	\fvt_{\ell+1}&=& 	\inf\big\{n\geq\bvt_{\ell}: |\sz_n|>\Upsilon_{\ell+1}\big\},
\end{eqnarray}
where $\Upsilon_{\ell+1}=\max\big\{|\sz_{n}|,\,n<\bvt_{\ell}\big\}$.

For $\ell\geq1$, $\fvt_\ell$ indicates the times where $|\sz|$ exceeds the previous maximum (above $M$), say at respective values $x_\ell$, and $\bvt_\ell$ the return time after that to either a value below $x_\ell$, if $x_\ell>0$, or a value above $-x_\ell$, if $x_\ell<0$. Notice that we may have 
$\fvt_{\ell+1}=\bvt_{\ell}$ for some $\ell$ 
\footnote{The first moment condition on $\xi_1$ can however be readily shown to imply that the set of such $\ell$ is a.s.~finite; this remark is however not taken advantage of in the sequel.}.
Figure~\ref{fig:cross} illustrates realizations of these random variables.

\begin{figure}[!htb]
	\centering
	\includegraphics[scale=0.4,trim=0 55 0 0,clip]{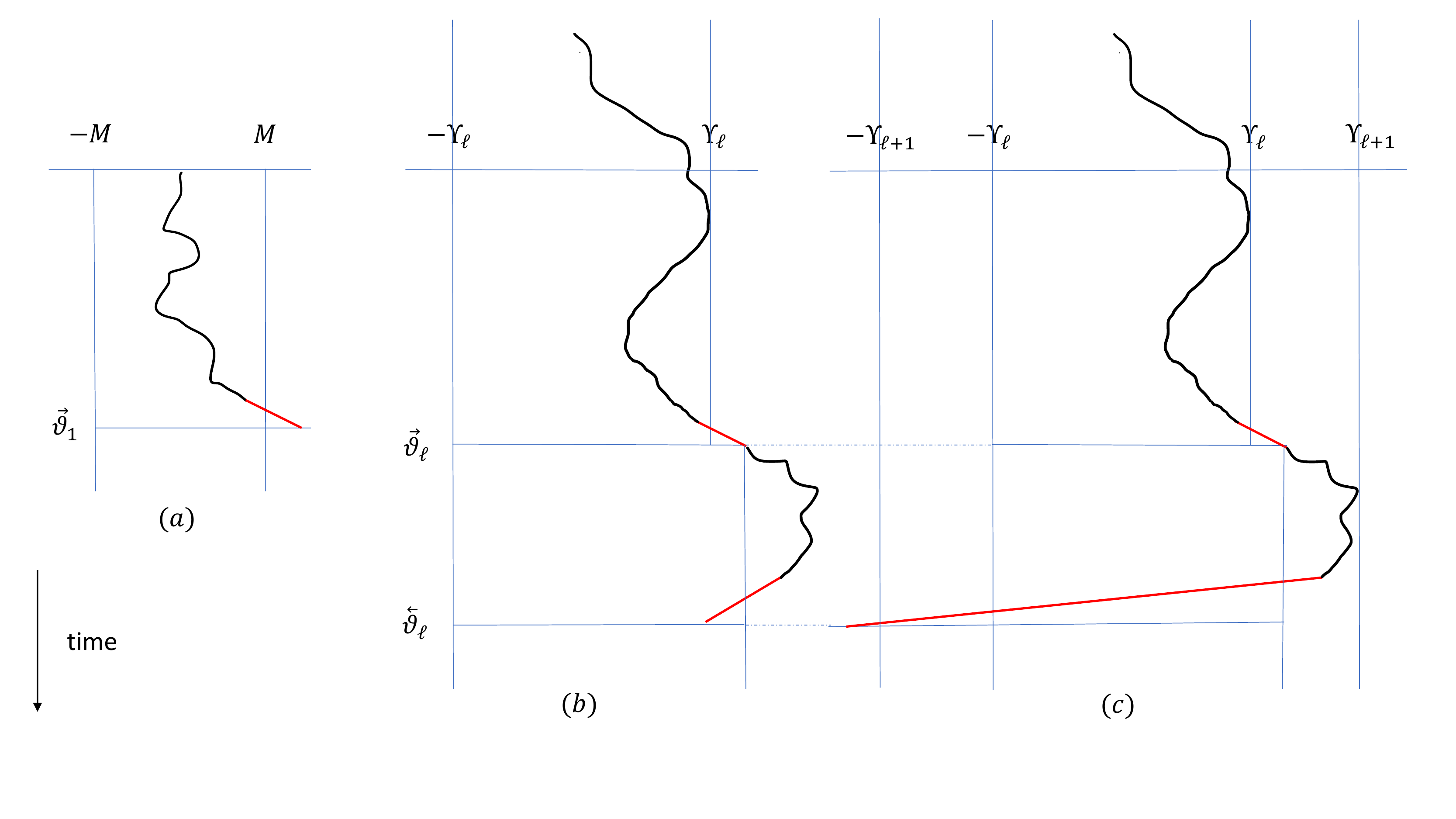}
	\caption{Illustration of occurrences of random variables introduced in~(\ref{vt1},\ref{vt2}). 
		In (b) and (c), edges in red represent single jumps.
		In (c)  we have  $\fvt_{\ell+1}=\bvt_{\ell}$, but not in (b).}
	\label{fig:cross} 
\end{figure}

Now set, for $\ell\geq1$
\begin{equation}\label{vt3}
	\vr_\ell=\bvt_{\ell}-\fvt_{\ell}\quad\text{and}\quad \chi_\ell=|\sz_{\fvt_\ell}|-\Upsilon_\ell.
\end{equation}

We will argue in Appendix~\ref{rwrs} 
that for $m\geq1$
\begin{eqnarray}\label{u1}
	&\sup_{\ell}\P(\vr_\ell>m)\geq\frac{\text{const}}{\sqrt m};&\\\label{u2}
	&\sup_{\ell}\P(\chi_\ell>m)\leq\frac{\text{const}}{m^\ve}.&
\end{eqnarray}

For $m\geq1$ fixed, let $\L_m=\inf\{\ell\geq1:\vr_\ell\geq m\}$. It readily follows from~\eqref{u1} that
\begin{equation}\label{vt4}
	\P(\L_m>m)\leq\Big(1-\frac{\text{const}}{\sqrt m}\Big)^m\leq e^{-c\sqrt m},
\end{equation}
for some constant $c>0$, and it follows from~\eqref{u2} that for $b>0$
\begin{equation}\label{vt5}
	\P\big(\max_{1\leq\ell\leq m}\chi_\ell>m^b\big)\leq \text{const } m^{1-b\ve},
\end{equation}
so that the latter probability vanishes as $m\to\infty$ for $b>1/\ve$.

We now notice that, pointing out to Appendix~\ref{rwrs}  for the definitions of $\bt^-_0$ and  $\bt^+_0$, that\break  
$\big(\chi'_\ell:=\Ups_{\ell+1}-|\sz_{\fvt_\ell}|,\vr_\ell\big)$ is distributed as 
$\big(\!\max_{0\leq i<\bt^-_0}\sz_i,\bt^-_0\big)$, if $\sz_{\fvt_\ell}>0$, 
and as $\big(\!-\min_{0\leq i<\bt^+_0}\sz_i,\bt^+_0\big)$, if $\sz_{\fvt_\ell}<0$. In the former case, we have for $k\geq1$
\begin{equation}\label{vt6}
	\P(\chi'_\ell>k;\,\vr_\ell\leq m)\leq \P\big(\max_{0\leq i\leq m}\sz_i>k\big)\leq\text{const }\frac{m}{k^2},
\end{equation}
where the latter passage follows from Kolmogorov's Maximal Inequality, and the same bound holds similarly in the latter case.

We thus have that, for $b>0$,
\begin{equation}\label{vt7}
	\P\big(\max_{1\leq\ell<\L_m}\chi'_\ell>m^b;\,\L_m\leq m\big)\leq\text{const }m^{1-2b}\leq\text{const }m^{1-b\ve},
\end{equation}
and, since $\Ups_{\ell}=M+\sum_{i=1}^{\ell-1}(\chi_i+\chi'_i)$, it follows that 
\begin{equation}\label{vt8}
	\P\big(\Ups_{\L_m}>m^{1+b}\big)\leq\text{const }m^{1-b\ve},
\end{equation}
which thus vanishes as $m\to\infty$ as soon as $b>1/\ve$.

\smallskip

We may now proceed directly to showing that 
\begin{equation}\label{cau1}
	\{\vp^M(n),\,n\geq1\}\,\text{ is a Cauchy sequence in distribution.}
\end{equation}

Let ux fix $m$ as above and $\x\in\{-m^{b+1},\dots,m^{b+1}\}$; we assume that $m>M^{1/(b+1)}$; we next set 
\begin{equation}\label{R}
	\cR=\inf\{n\geq\bvt_{\L_m}:\,\sz_n=\x\},
\end{equation}
which satisfies the conditions of Lemma~\ref{benv}, and we thus conclude from that lemma  
that, if $\sz\in\cJ$, then
\begin{equation}\label{benv2} 
	\limsup_{n\to\infty}\P^\sz_{\hat\mu_0}\big(\o'_{\x}(\tau_{(n-\cR+\frac m2)^+})>(\log m)^2\big)\leq e^{-\alpha  (\log m)^2},
\end{equation}
and thus that
\begin{equation}\label{benv3} 
	\limsup_{n\to\infty}\P^\sz_{\hat\mu_0}\Big(\max_{\x\in\{-m^{b+1},\dots,m^{b+1}\}}\o'_{\x}(\tau_{(n-\cR+\frac m2)^+})>(\log m)^2\Big)\leq
	e^{-c  (\log m)^2},
\end{equation}
where $c$ is a positive number, depending on $\alpha$ and $b$ only. It readily follows from the argumemts in the proof of Lemma~\ref{benv} 
(namely, the coupling of $\o'_\x$ and $\oq$) that
\begin{equation}\label{benv4} 
	\limsup_{n\to\infty}\P^\sz_{\hat\mu_0}\Big(\max_{\x\in\{-m^{b+1},\dots,m^{b+1}\}}\o'_{\x}(\tau_{(n-\bvt_{\L_m}+\frac m2)^+})>(\log m)^2\Big)\leq
	e^{-c  (\log m)^2}.
\end{equation}

Now let $\sz\in\cJ\cap\{\Ups_{\L_m}\leq m^{1+b}\}$, and choose $n_0\geq \bvt_{\L_m}$ so large that for $n\geq n_0$ we have
\begin{equation}\label{benv5} 
	\P^\sz_{\hat\mu_0}\Big(\max_{\x\in\{-\Ups_{\L_m},\dots,\Ups_{\L_m}\}}\o'_{\x}(\tau_{n-\bvt_{\L_m}+\frac m2})>(\log m)^2\Big)\leq
	e^{-c  (\log m)^2}.
\end{equation}
\begin{figure}[!htb]
	\centering
	\includegraphics[scale=0.48,trim=80 30 180 70,clip]{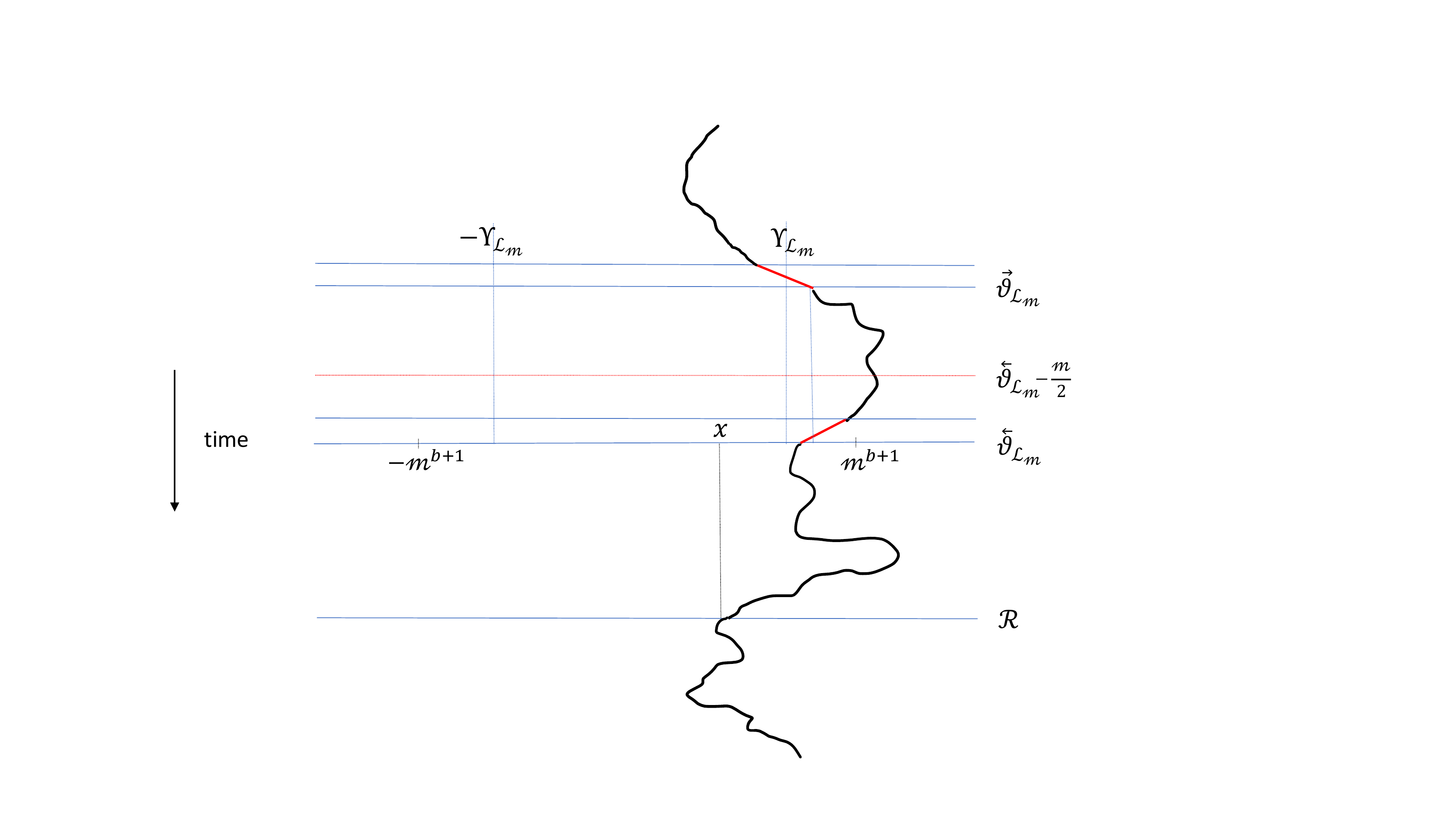}
	\caption{Schematic depiction of a stretch of the (backward) trajectory of $\sz\in\{\Ups_{\L_m}\leq m^{1+b}\}$.}
	\label{fig:ilu42} 
\end{figure}

Now let us consider a family of BDPs on the timelines of $\x\in\{-\Ups_{\L_m},\dots,\Ups_{\L_m}\}-\sz_{n-1}$,  which we
denote by $(\oq_\x)_{\x\in\{-\Ups_{\L_m},\dots,\Ups_{\L_m}\}-\sz_{n-1}}$, 
starting at time $\tau_{n-\bvt_{\L_m}+\frac m2}$ in the product of equilibrium distributions
$\hat\nu_m:=\!\!\!\bigotimes\limits_{\x\in\{-\Ups_{\L_m},\dots,\Ups_{\L_m}\}-\sz_{n-1}}\!\!\!\ \nu$, 
independently of $\o\big(\tau_{n-\bvt_{\L_m}+\frac m2}\big)$, 
with $\oq_\x$ coupled to $\o_\x$ so that they move independently till first meeting, after which time they coalesce. 

\begin{observacao}\label{eq}
	We have that $\big(\oq_\x(\tau_{n-\fvt_{\L_m}+1})\big)_{\x\in\{-\Ups_{\L_m},\dots,\Ups_{\L_m}\}-\sz_{n-1}}\sim\hat\nu_m$.
	This follows from the fact that in the period from $\tau_{n-\bvt_{\L_m}+\frac m2}$ to $\tau_{n-\fvt_{\L_m}+1}$ the jump time lengths
	of $X$ depend solely on $\big(\o_\x(\tau_{n-\bvt_{\L_m}+\frac m2})\big)_{\x\notin\{-\Ups_{\L_m},\dots,\Ups_{\L_m}\}-\sz_{n-1}}$, 
	and on the birth-and-death processes 
	evolving on timelines of $\Z\setminus\{-\Ups_{\L_m},\dots,\Ups_{\L_m}\}-\sz_{n-1}$.
	
\end{observacao}

Arguing similarly as in the proof of Lemma~\ref{benv} --- see Remark~\ref{couple} ---, we find that
\begin{equation}\label{benv6} 
	\P^\sz_{\hat\mu_0}\Big(\o_{\x}(\tau_{n-\fvt_{\L_m}+1})\ne\oq_{\x}(\tau_{n-\fvt_{\L_m}+1})\,
	\text{for some }\x\in\{-\Ups_{\L_m},\dots,\Ups_{\L_m}\}-\sz_{n-1}\Big)\leq
	e^{-c  m}.
\end{equation}

Now, given $\sz\in\cJ\cap\{\Ups_{\L_m}\leq m^{1+b}\}$, let $\vpq(n):=(\vpq_\x(n))_{\x\in\Z}$ represent the environment at time $n$ of a time 
inhomogeneous Markov jump process $(X(t),\o(t))$ starting at time $\tau_{n-\fvt_{\L_m}}$ from $\hat\nu$
(with $X$ starting at that time from $\sz_{\fvt_{\L_m}-1}$, and jumping,  forwards in time, along the backward trajectory of $\sz$).

\begin{observacao}\label{eq2}
	Notice that the distribution of $\vpq(n)$ does not depend on $n$.
\end{observacao}

Since, given $\sz$, the distribution of $\vp^M(n)$ depends only on environments at timelines of sites in 
$\{-\Ups_{\L_m},\dots,\Ups_{\L_m}\}-\sz_{n-1}$
from time $\tau_{n-\fvt_{\L_m}}$ to $\tau_n$, we have, for $\sz\in\cJ\cap\{\Ups_{\L_m}\leq m^{1+b}\}$, and in the complement
of the event under the probability sign on the left hand side of~\eqref{benv6}, and resorting to an obvious coupling, that 
$\vp^M_\x(n)=\vpq_\x(n)$ for $|x|\leq M$.

Now, finally, given $\ve>0$, we may choose $m$ large enough, and then $n_0$ large enough so that, for 
$\sz\in\cJ\cap\{\Ups_{\L_m}\leq m^{1+b}\}$, and by~\eqref{benv6}, we have that the distance\footnote{associated  to the usual product topology}
of the conditional distribution of $\vp^M(n)$ given $\sz$ to that of 
the conditional distribution of $\big(\vpq_\x(n)\big)_{|\x|\leq M}$ given $\sz$ is smaller than $\ve$.
We conclude from Remark~\ref{eq2} that the sequence in $n$ of conditional distributions of $\vp^M(n)$ given $\sz$ is Cauchy, and 
thus the same holds for the sequence of unconditional distributions. It readily follows from the above arguments that the limit 
is the same no matter what are the details of $\hat\nu_0$ satisfying the conditions in the paragraph of~\eqref{expdec}.

\subsubsection{Second case: $\E(\xi_1)=0$  and $d\geq2$}\label{m02+}


\noindent
Let us first note that each coordinate of $\sz$ performs mean zero walks with the same moment condition as in the $d=1$ case, 
so arguments for that case apply to, say, the first coordinate, and we get control of the location of that coordinate at (backward)
time $\bvt_{\L_m}$: it is with high probability in $\{-m^{1+b},\ldots,m^{1+b}\}$ if $m$ is large, according to~\eqref{vt8}.
But now we need to control the location of the other coordinates; and we naturally seek  a similar polynomial such control
as for the first coordinate.

In order to achieve that, we will simply show that, with high probability, we have polynomial control on the size of $\bvt_{\L_m}$,
and that follows from standard arguments once we condition on the  event that $\Ups_{\L_m}\leq m^{1+b}$ --- which
has high probability according to~\eqref{vt8}. Indeed, on that event $\bvt_{\L_m}$ is stochastically dominated by 
$\vartheta^*=\inf\{n>0: |\sz_n|>m^{1+b}\}$, the hitting time by $\sz$ of the complement of $\{-m^{1+b},\ldots,m^{1+b}\}$.
It is well known that under our conditions we have that $\vartheta^*\leq m^{2(1+b+\d)}$ with high probability for all $\d>0$
--- see Theorem 23.2 in~\cite{Spi76}, and thus, again recalling a well known result (see Theorem 23.3 in~\cite{Spi76}), we have that
the max over the $j=$ 2 to $d$, and times from 0 to $\bvt_{\L_m}$, of the absolute value of $j$-th coordinate of $\sz$ is bounded
by $m^{1+b+\d}$ with high probability for any $\d>0$.

With this control over the maximum dislocation of $|z|$ from time 0 to $\bvt_{\L_m}$, we may repeat essentially the same argument
as for $d=1$ (with minor and obvious changes).

\subsubsection{Last case: $\E(\xi_1)\ne0$}\label{mune0}


\noindent
A similar, but simpler approach works in this case as in the previous case. 

Let us assume without loss that $\E(\xi_1(1))<0$ --- so that $\E(\xi'_1(1))>0$, where the '1' within parentheses indicate the coordinate. 
We consider the quantities introduced in Subsubsection~\ref{m01} for $\sz(1)$ instead of $\sz$,
and let $\L_\infty=\inf\{\ell\geq1:\vr_\ell=\infty\}$. It is quite clear that this is an a.s.~finite random variable. 
Now, given a typical $\sz$, as soon as $n\geq\L_\infty$, we have that $\vp^M(n)$ is again distributed as $\vpq(n)$ given above; see paragraph 
right below~\eqref{benv6}; by Remark~\ref{eq2}, this distribution does not depend on $n$; notice that the latter definition and property make 
sense and hold in higher dimensions as well. 
The result follows for $\vp^M(n)$ conditioned on $\sz$, and thus also for the unconditional distribution. 

Notice that we did not need a positive lower bound for $\vf$ (and neither a finite uper bound).

\begin{observacao}\label{cond}
	We note that the above proof established, in every case, the convergence of the conditional distribution of $\vp(n)$ given $\sz$ as $n\to\infty$ 
	to, say, $\vp^\sz$.
\end{observacao}

\begin{observacao}
	It is natural to ask about the asymptotic environment seen by the particle at large deterministic times. 
	A strategy based on looking at the environment seen at the most recent jump time, 
	which might perhaps allow for an approach like the above one, 
	seems to run into a sampling paradox-type issue,  which may pose considerable difficulties in the inhomogeneous setting.
	We chose not to pursue the matter here.
\end{observacao}

\subsection{Proof of the second assertion of Theorem~\ref{conv_env}}
It is enough, taking into account Remark~\ref{cond}, to show the result for the limit of the conditional distribution of $\vp(n)$ given $\sz$, which we denote by $\vp^\sz$, for $\sz$ in an event of arbitrarily large probability, as follows. 

For $N\geq0$, let $\fQ_N=\{\x\in\Z^d:\,\|\x\|\leq N\}$ and $T_N=\inf\{k\geq0:\,\|\sz_k\|\geq N\}$.
One may check that, by our conditions on the tail of $\pi$ and the Law of Large Numbers, we have that for some $a>0$ 
there a.s.~exists $N_0$ such that for all $N\geq N_0$ we thave that $T_N>aN$.
For $\x\in\Zd$, let $\cR_\x=\inf\{k\geq0:\,\sz_k=\x\}$, and let $\fR=\{\x\in\Zd:\,\cR_\x<\infty\}$.

Consider now the event $\tilde\cJ_N:=\cap_{\x\in\fR,\|\x\|\geq N}\cJ(\x,a\|\x\|)$, with $\cJ(\cdot,\cdot)$ as in the paragraph of~\eqref{k1}.
It follows from~\eqref{k1} that
\begin{equation}\label{jm}
	\P\big(\tilde\cJ_N^c\big)\leq e^{-cN},
\end{equation}
for some positive constant $c$ (again, not the same in every appearance).
Lemma~\ref{benv} and the remark in the above paragraph then ensure that  for $\x\in\fR$ such that
$\|\x\|\geq N\geq a^{-1}m_0\vee N_0$ and a.e.~$\sz\in\tilde\cJ_N$, we have that
\begin{equation}\label{abc1} 
	\P^\sz_{\hat\mu_0}\big(\o_{\x-\sz_{n-1}}(\tau_{(n-\cR_\x+\frac{a\|\x\|}2)^+})>(\log a\|\x\|)^2\big)\leq e^{-c( (\log a\|\x\|)^2)};
\end{equation}
it readily follows fromm Lemma~\ref{fbe} that the same bound holds for $\x\notin\fR$;

For $\|\x\|\geq N\geq a^{-1}m_0\vee N_0$, let us couple $\o_\x$ from $\tau_{(n-\cR_\x+\frac{a\|\x\|}2)^+}$ onwards, in a coalescing way, 
as done multiple times above, to $\oq_\x$, a BDP starting at $\tau_{(n-\cR_\x+\frac{a\|\x\|}2)^+}$ from $\nu$, its equilibrium distribution.
We assume that $\oq_\x$, $\|\x\|\geq N$, are independent. 
It readily follows, from arguments already used above, that, setting 
$\fC_N=\cap_{\|\x\|\geq N}\{\o_{\x-\sz_{n-1}}(\tau_n)=\o_{\x-\sz_{n-1}}(\tau_n)\}$,
we have that 
\begin{equation}\label{abc2} 
	\P^\sz_{\hat\mu_0}\big(\fC_N^c\big)\leq e^{-cN}. 
\end{equation}

For $N\geq0$ and $n>T_N$, let us consider $\ozq(n)=(\hozq(n),\chozq)$,  where $\hozq(n)$ is $\vp^\sz(n)$ restricted to $\fQ_N$, 
and $\chozq$ is distributed as the product of $\nu$ over  $\N^{\Zd\setminus\fQ_N}$, independently of $\hozq(n)$. 
The considerations of the previous paragraph imply that we may couple $\vp^\sz(n)$ and $\ozq(n)$ so that they coincide outside a probability 
which vanishes as $N\to\infty$ {\em uniformly} in $n>T_N$. 

Now set $\ozq:=(\hozq,\chozq)$,  where $\hozq$ is $\vp^\sz$ restricted to $\fQ_N$, 
and $\chozq$ is distributed as the product of $\nu$ over  $\N^{\Zd\setminus\fQ_N}$, independently of $\hozq$. 
From the first item of Theorem~\ref{conv_env}, we have that $\hozq(n)$ converges in distribution to $\hozq$ as $n\to\infty$.
It follows from this and the considerations in the previous paragraph that we may couple $\vp^\sz=(\hozq,\choz)$ and $\ozq$ so that
$\choz=\chozq$ outside an event of vanishing probability as $N\to\infty$.

In order to conclude, let us take an event $A$ of the product $\sigma$-algebra generated by the cylinders of $\N^{\Zd}$ such that 
$\hat\nu(A)=0$.  Given $\eta\in\N^{\fQ_N}$, let $A_\eta=\{\zeta\in\N^{\Zd\setminus\fQ_N} : (\eta,\zeta)\in A\}$.
Since $\hat\nu$ assigns positive probability to every cylindrical configuration of $\N^{\Zd}$, it follows that $\check\nu(A_\eta)=0$ 
for every $\eta\in\N^{\fQ_N}$, where $\check\nu$ is the product of $\nu$'s over $\N^{\Zd\setminus\fQ_N}$.
Thus, resorting to the coupling in the previous paragraph, we find that
\begin{equation}\label{abc3}
	\P^\sz_{\hat\mu_0}\big(\vp^\sz\in A\big)\leq \P^\sz_{\hat\mu_0}\big(\ozq\in A\big)+\P^\sz_{\hat\mu_0}\big(\choz\ne\chozq\big)
	=\P^\sz_{\hat\mu_0}\big(\choz\ne\chozq\big),
\end{equation}
since the first probability on the right hand side equals
\begin{equation}\label{abc4}
	\sum_{\eta\in\N^{\fQ_N}}\check\nu\big(A_\eta\big)\P^\sz_{\hat\mu_0}\big(\hat\vp^\sz=\eta\big)=0,
\end{equation}
as follows from what was pointed out above in this paragraph. Since the right hand side of~\eqref{abc3} vanishes as $N\to\infty$, 
as pointed out at the end of the one but previous paragraph, and the left hand side does not depend on $N$, we have that 
$\P^\sz_{\hat\mu_0}\big(\vp^\sz\in A\big)=0$, as we set out to prove.



\appendix

\section{Equivalence of~\eqref{extracon} and~\eqref{extracor}}
\label{app}

\setcounter{equation}{0}

The formulas we will present below may be found in the literature, either explicitly or from other explicit formulas~\cite{KMcG}. For this reason, but also in an attempt of self-containment, we will deduce them, rather briefly and sketchily, trusting the reader to be readily able to fill details in (or go to the literature).

For $n\in\N_*$, let $\T_n$ and $\ss_n$ denote $\Er_{n}(T_{n-1})$ and $\Er_{n}(T^2_{n-1})$, respectively. Under $\Pr_{n}$, by the Markov property, we have that
\begin{equation}\label{rec}
	T_{n-1}=1+\oone\{\w_1=n+1\}(T_n'+T_n''),
\end{equation}
where $\oone\{\w_1=n+1\},T_n',T_n''$ are independent, $T_n'$ has the same distribution as $T_n$ and $T_n''$  has the same distribution as $T_{n+1}$ under 
$\Pr_{n+1}$. It follows that for $n\geq1$
\begin{equation}\label{rec1}
	\T_n=\frac1{q_n}+\rho_n\T_{n+1}\quad\mbox{and}\quad\ss_n=\s_n+\rho_n\ss_{n+1},
\end{equation}
where $\s_n=s_n/q_n$, and $s_n=1+2p_n(\T_n+\T_{n+1}+\T_n\T_{n+1})$.\footnote{The finitude of $\T_n$ follows from the ergodicity of $\w$; that of $\ss_n$ may be checked by comparison to versions of $\w$ reflected at large states, using a similar reasoning as in the present argument for the reflected versions of $\ss_n$, which are more obviously finite.}
It then readily follows that
\begin{equation}\label{rec2}
	\T_n=\frac{1}{R_{n-1}}\sum_{\ell\geq n}\frac1{q_\ell}R_{\ell-1}\quad\mbox{and}\quad\ss_1=\sum_{\ell\geq 1}{\s_\ell}R_{\ell-1}.
\end{equation}
Our condiitons on $\p,\q$ imply that $\T_n\asymp\frac{S_{n-1}}{R_{n-1}}$ and $\s_n\asymp\T_n\T_{n+1}$. It follows that
\begin{equation}\label{rec3}
	\ss_1\asymp\sum_{\ell\geq 1}\frac{S_{\ell-1}S_\ell}{R_\ell}\asymp\sum_{\ell\geq 1}\frac{S^2_\ell}{R_\ell},
\end{equation}
and the second equivalence is established. For the first one, we have that $\Er_\nu(T_0)<\infty$ if and only if
\begin{equation}\label{rec4}
	\infty>\sum_{n\geq0}R_{n}\sum_{i=0}^{n}\frac{S_i}{R_i}=\sum_{i\geq 0}\frac{S^2_i}{R_i},
\end{equation}
completing the argument.



\section{Justification of~\eqref{u1} and~\eqref{u2}}
\label{rwrs}

\setcounter{equation}{0}

Notice that the distribution of $\vr_\ell$ depends on $\ell$ only through the sign of $\cx_{\fvt_{\ell-1}}$ and equals that of
$\bt^-_0$, if $\cx_{\fvt_{\ell-1}}>0$, and that of $\bt^+_0$, if $\cx_{\fvt_{\ell-1}}<0$, 
where for $L\geq0$
\begin{equation}\label{tl}
	\bt^-_L:=\inf\{n\geq1:\,\sy_n<-L\}\,\text{ and }\,\bt^+_L:=\inf\{n\geq1:\,\sy_n>L\}.
\end{equation}

Also, given $\fvt_{\ell-1}=J$ and $\cx_{\fvt_{\ell-1}}=K>0$, we have that for $m\geq1$
\begin{equation}\label{tl1}
	\{\chi_\ell>m\}\subset
	\big\{\sy_{\bvt_{\ell-1}}-K>m\big\}\cup
	\left\{\big\{\sy_{\bvt_{\ell-1}}\in(-K,K)\big\}\cap\Big\{\big\{\chi'_\ell>m\big\}\cup\big\{\chi''_\ell>m\big\}\Big\}\right\},
\end{equation}
where $\chi'_\ell=\sy_{\vr'_\ell}-\Upsilon_\ell$, with $\vr'_\ell=\inf\{n>\bvt_{\ell-1}:\,\sy_n>\Upsilon_\ell\}$, and
$\chi''_\ell=|\sy_{\vr''_\ell}|-\Upsilon_\ell$, with $\vr''_\ell=\inf\{n>\bvt_{\ell-1}:\,\sy_n<-\Upsilon_\ell\}$.

A similar reasoning holds in the case where $K<0$, and it readily follows 
that, in order to justify~\eqref{u1} and~\eqref{u2}, it is enough to show that
\begin{eqnarray}\label{u3}
	&\P(\bt^+_0>m),\,\P(\bt^-_0>m)\geq\frac{\text{const}}{\sqrt m};&\\\label{u4}
	&\sup_{L\geq0}\P\big(\sy_{\bt^+_L}-L>m\big),\,\sup_{L\geq0}\P\big(|\sy_{\bt^-_L}|-L>m\big)\leq\frac{\text{const}}{m^\ve}.&
\end{eqnarray}

(\ref{u4}) follows from the known, stronger result that both expressions in it are asymptotic to const$/\sqrt{m}$ as $m\to\infty$
--- see e.g.~\cite{Em}, Theorem 1, and the remark which follows its statement.

Having not found a direct reference for~(\ref{u4}), which we suspect may exist, we develop 
a strategy based on material of~\cite{Spi76}, wherefrom that claim may be argued, as follows.

Our argument 
goes through an adaptation of a hint to an exercise proposed in 
Chapter IV of~\cite{Spi76}, namely, Exercise 6 in that chapter.
The hint, describing an approach attributed by the author to Harry Kesten (without further reference) to give a bound on the moments of what in our notation is $\sy_{\bt^+_L}$, can be adapted to yield the following. 
Let us argue the former case of~\eqref{u4}; the latter case is similar.


Following Kesten's approach, as outlined in the above mentioned hint, we may write
\begin{eqnarray}
	\P\big(\sy_{\bt^+_L}-L>m\big)&=&\sum_{k>m}\sum_{z\leq L}\,g_{(L,\infty)}(0,z)\P(\xi'_1=L-z+k)\nn\\\label{hk1}
	&=&\sum_{z\leq L}\,g_{(L,\infty)}(0,z)\P(\xi'_1>L-z+m),
\end{eqnarray}
where $g_{(L,\infty)}(0,z)$ is the expected number of visits of $\sy$ to $z$ before going above $L$ for the first time.

We claim  that 
\begin{equation}\label{hk2}
	g_{(L,\infty)}(0,z)\leq\text{const }\{(L-z+1)\wedge (L+1)\},\,z\leq L. 
\end{equation}

It follows that $\P\big(\sy_{\bt^+_L}-L>m\big)$ is bounded above by constant times
\begin{equation}\label{hk3}
	(L+1)\sum_{w\geq L}\P(\xi'_1>w+m)+\sum_{w\geq 0}(w+1)\P(\xi'_1>w+m),
\end{equation}
and the $2+\ve$ moment condition on $\xi_1$ readily implies the former case of~\eqref{u4}.

To justify~\eqref{hk2}, we first notice that, by a standard use of the Markov property, we have that, for $0<z\leq L$,
\begin{equation}\label{h1}
	g_{(L,\infty)}(0,z)\leq\sup_{w\leq0}g_{(L-z+1,\infty)}(0,w),
\end{equation}
and for $z\leq0$
\begin{equation}\label{h2}
	g_{(L,\infty)}(0,z)\leq g_{(0,\infty)}(0,z)+\sup_{w<0}g_{(L-1,\infty)}(0,w).
\end{equation}

Setting $\G_k=\sup_{w\leq0}g_{(k,\infty)}(0,w)$, $k\geq0$, we find from~(\ref{h1},\ref{h2}) that 
\begin{equation}\label{h3}
	\G_L\leq (L+1)\G_0\,\text{ and, for } 0<z\leq L, \, g_{(L,\infty)}(0,z)\leq (L-z+1)\G_0.
\end{equation}

It remains thus to show that
\begin{equation}\label{h4}
	\G_0<\infty.
\end{equation}

In order to do that, we proceed as follows. Let $\Theta_0=0$ and, for $n\geq1$, $\Theta_n=\inf\{\ell>\Theta_{n-1}:\,\sy_\ell=0\}$.
$(\Theta_n)_{n\geq0}$ are the successive passages of $\sy$ by the origin. They are a.s.~all finite under our conditions. 
Now for $n\geq1$ and $z<0$, let $N_n(z)=\sum_{i=\Theta_{n-1}}^{\Theta_n}\oone\{\sy_i=z\}$ be the number of visits of $\sy$ to $z$ 
between times $\Theta_{n-1}$ and $\Theta_n$. Finally, set $\tau^+=\inf\{n\geq0:\,\sy_{\Theta_n+1}>0\}$.

We then have for $z<0$ that
\begin{equation}\label{h5}
	g_{(0,\infty)}(0,z)\leq\E_0\Big\{\sum_{n=1}^{\tau^+}N_n(z)\Big\},
\end{equation}
where $\sum_{n=1}^0\cdots=0$ by convention. One can readily check that the latter expectation equals
\begin{equation}\label{h6}
	\E_0(\tau^+)\,\E_0\big(N_1(z)|\xi'_1<0\big).
\end{equation}

Now, from the identity
\begin{equation}\label{h7}
	\E_0\big(N_1(z)|\xi'_1<0\big)\P_0(\xi'_1<0)+\E_0\big(N_1(z)|\xi'_1>0\big)\P_0(\xi'_1>0)=\E_0\big(N_1(z)\big)=1,
\end{equation}
where the last equality follows from an application of Proposition 3 in Chapter 3 of~\cite{Spi76}, Section 11, we find that
the latter expectation in~\eqref{h6}  is bounded above by  $1/\P_0(\xi'_1<0)<\infty$.

The  former expectation in~\eqref{h6} is quite clearly finite; $g_{(0,\infty)}(0,0)$ is readily seen to be bounded above by 
that same expectation, and~\eqref{h4} follows.



\section{Bounding the upper tail of an auxiliary random variable}
\label{frakbd}

\setcounter{equation}{0}

\begin{lema}\label{fbd} 
	Let $\o$ be a $BDP(p,q)$ with $p<\frac12$ starting from the equilibrium distribution, and let $T$ be a random variable, independent of $\o$, 
	distributed as the sum of $L^3$  iid exponential random variables of rate $\rho>0$, with $L\geq1$. Set $M=\max_{0\leq t\leq T}\o(t)$. Then
	\begin{equation}\label{fbd1}
		P\big(M>(\log L)^2\big)\leq e^{-c(\log L)^2}
	\end{equation}
	for all large enough $L$, where $c$ is a positive constant.
\end{lema}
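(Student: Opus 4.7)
The plan is to decompose $\{M > (\log L)^2\}$ according to whether the random time $T$ is atypically large or not, and to handle the two resulting events separately. Set $T^\ast := 2L^3/\rho$ and $N := (\log L)^2$. Since $T$ is a sum of $L^3$ iid $\mathrm{Exp}(\rho)$ random variables with mean $L^3/\rho$, a routine Cram\'er-type large deviation estimate gives $P(T > T^\ast) \leq e^{-c_1 L^3}$ for some constant $c_1 > 0$ depending only on $\rho$. On the complementary event $\{T \leq T^\ast\}$ we have $M \leq \max_{0 \leq t \leq T^\ast}\omega(t)$, so it suffices to show that the BDP starting from $\nu$ reaches level $N$ within time $T^\ast$ with probability at most $e^{-c(\log L)^2}$.

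To control the latter, I would first invoke stationarity of $\omega$ under $\nu$: for every integer $k \geq 0$ the process $(\omega(s))_{k \leq s \leq k+1}$ has the same law as $(\omega(s))_{0 \leq s \leq 1}$, so a union bound over the $\lceil T^\ast\rceil$ unit intervals covering $[0,T^\ast]$ yields
\begin{equation*}
\Pr\nolimits_\nu\!\Big(\max_{0 \leq t \leq T^\ast}\omega(t) \geq N\Big) \leq \lceil T^\ast\rceil\, \Pr\nolimits_\nu\!\Big(\max_{0 \leq t \leq 1}\omega(t) \geq N\Big).
\end{equation*}
The key ingredient is then a Poisson-domination argument on unit intervals. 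Since the total jump rate of $\omega$ from any state is at most $p+q=1$, the number of jumps $J$ of $\omega$ on $[0,1]$ is stochastically dominated by a $\mathrm{Poisson}(1)$ variable; conditioning on $\omega(0)=k$, the event $\{\max_{0 \leq t \leq 1}\omega(t) \geq N\}$ forces $J \geq N-k$ whenever $k<N$. Using that $\nu$ is geometric with ratio $r := p/q < 1$, so $\nu([N,\infty)) = r^N$, together with the Poisson tail bound $\Pr(J\geq j) \leq C/j!$, I get
\begin{equation*}
\Pr\nolimits_\nu\!\Big(\max_{0 \leq t \leq 1}\omega(t) \geq N\Big) \leq r^N + C\sum_{j=1}^{N} r^{N-j}/j! \leq C'\,r^N,
\end{equation*}
with $C'$ depending only on $r$.

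Combining, $P(M \geq N) \leq e^{-c_1 L^3} + C''\,L^3\,e^{-N\log(q/p)}$. With $N=(\log L)^2$ the second term is $\exp\{3\log L - (\log L)^2\log(q/p)\} \leq e^{-c(\log L)^2}$ for any $c < \log(q/p)$ and $L$ large, while the first term is exponentially smaller. The proof has no serious obstacle, the only substantive step being the unit-interval estimate above, which crucially uses the bounded total jump rate of $\omega$ to make arbitrary upward excursions in unit time exponentially costly; the remainder is standard large-deviation and stationarity bookkeeping.
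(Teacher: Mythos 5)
Your proof is correct, and it takes a genuinely different route from the paper's. The paper first bounds the \emph{number of jumps} of $\o$ up to time $T$ by $H=\lceil c'L^3\rceil$ via large deviations, and then dominates $M$ pathwise by $W+(Y_1\vee\cdots\vee Y_H)$, where $W\sim\nu$ is the initial value and the $Y_i$ are iid copies of the (a.s.\ finite, exponentially-tailed) maximum of a downward-drifting simple random walk — the idea being that every excursion of $\o$ between successive visits to the origin contributes at most one such excursion maximum. You instead bound the \emph{time} $T$ by $T^*=2L^3/\rho$, use stationarity of $\o$ under $\nu$ to union-bound over the $O(L^3)$ unit subintervals of $[0,T^*]$, and then on a single unit interval combine the geometric tail of $\nu$ with Poisson domination of the jump count. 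Both arguments end with the same $O(L^3)\cdot e^{-c'(\log L)^2}$ bookkeeping. The one thing the paper's route buys over yours is Remark~\ref{gen_init}: since its bound $M\preceq W+(Y_1\vee\cdots\vee Y_H)$ only uses that $W=\o(0)$ has an exponentially decaying tail, it extends verbatim to non-equilibrium product starts, whereas your stationarity-plus-union-bound step is specific to the equilibrium start. That extension is used afterwards in the proof of Lemma~\ref{benv}, so if you wanted to replace the paper's proof with yours you would need a small supplementary argument (e.g.\ conditioning on $\o(0)=k$, subtracting $k$, and noting that to climb from $k<N/2$ past $N$ within a unit interval still forces $\geq N/2$ jumps) to recover that generality. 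As a proof of the lemma exactly as stated, however, yours is complete.
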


\begin{proof}
	We may assume, using the upper bound on $\vf$, and resorting to standard large deviation estimates, that $N:=$ 
	the number of jumps of $\o$ up to $T$ is bounded above by $H:=\lceil c'L^3\rceil$, where $c'$ is a finite constant.
	
	Let now $W$ be a random variable distributed as the equilibrium distribution of $\o$, and, independently, consider $Y_1,Y_2,\ldots$,
	independent copies of the maximum of a discrete time simple random walk on $\Z$ with probability of jumping to the left 
	given by $p$ and starting at the origin.
	One now readily checks that $M\preceq W+(Y_1\vee\cdots\vee Y_H)$. Thus
	\begin{equation}\label{fbd2}
		P\big(M>(\log L)^2\big)\leq P\big(W>(\log L)^2/2\big)+H P\big(Y_1>(\log L)^2/2\big)
	\end{equation}
	plus a term that decays expoenentially in $L^3$ (which controls the probability that $N>H$), 
	and the well known exponential tails of the distributions of $W$ and $Y_1$ yield the result.
\end{proof}

\begin{observacao}\label{gen_init}
	Notice that it is enough to have the distribuion $W$ with an exponentially decaying tail for the argument to hold.
\end{observacao}

\begin{lema}\label{fbe} 
	Let $\o$ be a $BDP(p,q)$ with $p<\frac12$ starting from a distribution $\bar\mu$ with an exponentially decaying tail
	as in~\eqref{expdec}. 	Then, 
	there exist constants $C$ and $\beta'>0$ such that 
	\begin{equation}\label{fbe1}
		P\big(\o(t)>n\big)	\leq C e^{-\beta' n}
	\end{equation}
	for all $n$ (uniformly in $t$).
\end{lema}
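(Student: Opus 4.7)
\medskip

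\noindent\textbf{Proof plan for Lemma~\ref{fbe}.}
The natural route is via an exponential moment / moment generating function argument, exploiting the fact that the drift $p-q$ is strictly negative (since $p<\frac12$). For $z>1$ define
\begin{equation*}
 f(t,z):=\E\bigl[z^{\o(t)}\bigr].
\end{equation*}
Applying the generator $\sQ$ of the BDP to $h(n)=z^n$ I compute $(\sQ h)(n)=\alpha(z)z^n+q(1-z^{-1})\,\mathbb 1\{n=0\}$, where
\begin{equation*}
 \alpha(z):=p(z-1)-q(1-z^{-1})=\frac{(z-1)(pz-q)}{z}.
\end{equation*}
The crucial observation is that $p<\tfrac12<q$, so $\alpha(z)<0$ on the open interval $(1,q/p)$.

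The forward equation $\frac{d}{dt}f(t,z)=\E[(\sQ h)(\o(t))]$ then becomes the linear ODE
\begin{equation*}
 \frac{d}{dt}f(t,z)=\alpha(z)\,f(t,z)+q(1-z^{-1})\,\Pr(\o(t)=0),
\end{equation*}
which upon integration and the trivial bound $\Pr(\o(t)=0)\le 1$ yields, for every $z\in(1,q/p)$,
\begin{equation*}
 f(t,z)\le f(0,z)\,e^{\alpha(z)t}+\frac{q(1-z^{-1})}{|\alpha(z)|}\le f(0,z)+\frac{q}{|\alpha(z)|},
\end{equation*}
uniformly in $t\ge 0$.

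To make the right hand side finite I need to pick $z>1$ for which the initial MGF $f(0,z)=\int z^k\,\bar\mu(dk)$ is finite; by the hypothesis~\eqref{expdec} this holds for every $z<e^{\beta}$. Choosing $z_*\in(1,\min(q/p,e^\beta))$, the estimate above gives $\sup_{t\ge 0}f(t,z_*)\le C<\infty$, and Markov's inequality produces
\begin{equation*}
 \Pr(\o(t)>n)\le z_*^{-(n+1)}f(t,z_*)\le C\,e^{-\beta' n},\qquad\beta':=\log z_*>0,
\end{equation*}
which is exactly~\eqref{fbe1}. The only potentially delicate point is justifying the differentiation $\frac{d}{dt}f(t,z)=\E[(\sQ h)(\o(t))]$, i.e.\ interchanging expectation and derivative in the Kolmogorov equation; this is handled by the same dominated convergence reasoning used in the proof of Lemma~\ref{lema:3.2}, using that $z_*^{\o(t)}$ has uniformly bounded expectation on bounded time intervals (which follows from a rough a priori bound $\o(t)\le\o(0)+N^+_t$ with $N^+_t$ a rate-$p$ Poisson process). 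No monotonicity of $\vf$ is needed; the whole argument uses only $p<\tfrac12$ and the exponential tail of $\bar\mu$.
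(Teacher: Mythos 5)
Your proof is correct, and it takes a genuinely different route from the one in the paper. The paper argues via coupling: it runs $\o$ alongside an independent copy $\o'$ started from the equilibrium $\nu$, coalescing when they meet, and alongside a simple random walk $Z$ up to the first visit to the origin; the tail bound is then assembled from exponential tails of $\nu$, of first-passage times to the origin, and of the running maximum of $Z$. Your argument instead works with the generating function $f(t,z)=\E[z^{\o(t)}]$, exploits the strict negativity of $\alpha(z)=(z-1)(pz-q)/z$ on $(1,q/p)$ (which encodes the negative drift $p<\tfrac12$), integrates a linear ODE, and finishes with Markov's inequality. Both proofs work, but they buy different things. Your generating-function argument is more self-contained and gives an explicit, clean rate $\beta'=\log z_*$ for any $z_*<\min(q/p,e^\beta)$; it also avoids having to separately estimate first-passage times and the max of a random walk. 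The paper's coupling argument is less explicit but fits naturally with the coalescing couplings used repeatedly throughout Sections~\ref{ext} and~\ref{env}, reusing machinery already set up there (in particular the estimate $M\preceq W+(Y_1\vee\cdots\vee Y_H)$ from Lemma~\ref{fbd}). The one point to make sure is airtight in your write-up is the justification of $\frac{d}{dt}f(t,z)=\E[(\sQ h)(\o(t))]$; your a priori domination $\o(t)\le\o(0)+N^+_t$ gives $f(t,z)\le f(0,z)e^{p(z-1)t}$, which is indeed enough (uniform integrability on bounded time intervals) to interchange $\frac{d}{dt}$ with $\sum_n$ in the Kolmogorov forward equations, since the jump rates are uniformly bounded by $1$.
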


\begin{proof}
	The strategy follows lines already pursued above.
	We consider a coupling of $\o$ to a copy $\o'$ starting from $\nu$, the equilibrium for the BDP, and independent till 
	first meeting, after which both processes coalesce.  Let $T$ and $T'$ denote the respective times of first visit to the origin
	by $\o$ and $\o'$. We also couple $\o$ to $Z$, a simple random walk, so that $\o=Z$ up to $T$. 
	We readily check that $T'$ (but also $T$) has an expoential tail. 
	
	The result will be readily implied from the following remarks.
	\begin{itemize}
		\item On $\{T\vee T'\leq t\}$, we have that $\o(t)=\o'(t)\sim\nu$;
		\item on $\{T>t\}$, we have that $\o(t)=Z_t\preceq \tilde Y=\max_{0\leq s<\infty}Z_s$;
		\item on $\{T\leq t<T'\leq n\}\cap\{N'\leq 2n\}$, we have that $\o(t)\leq\max_{0\leq s\leq T'}\o(s)\preceq W+(Y_1\vee\cdots\vee Y_{2n})$,
	\end{itemize}
	with $N', W,\,Y_1,\,Y_2\ldots$ as in the previous proof, with $N'$ defined similarly as $N$, except that $\o',\,T'$ replace $\o,\,T$, respectively.
	Thus
	\begin{eqnarray}
		P(\o(t)>n)	&\leq& \nu([n,\infty))+P(\tilde Y>n) + P(T'>n) + P(N'>2n)\nn\\\label{fbe2}&& + P(W> n/2) + 2n P(Y_1>n/2),
	\end{eqnarray}
	and the result follows.
\end{proof}



\subsection*{Acknowledgements}

LRF warmly thanks Elena Zhizhina for many enjoyable discussions during a visit of hers to São Paulo, where this project
originated.  He also thanks Hubert Lacoin for bringing up the issue addressed in Remark~\ref{vfzero}.






\end{document}